\pgfplotsset{compat=1.18} 
\theoremstyle{plain}
\newtheorem{theorem}{Theorem}
\newtheorem{lemma}[theorem]{Lemma}
\newtheorem{proposition}[theorem]{Proposition}
\newtheorem{corollary}[theorem]{Corollary}
\newtheorem*{question*}{Question}
\theoremstyle{definition}
\theoremstyle{remark}
\newcommand{\CC}{\mathbb{C}}
\newcommand{\DD}{\mathbb{D}}
\newcommand{\RR}{\mathbb{R}}
\newcommand{\A}{\mathcal {A}}
\newcommand{\NN}{\mathcal {N}}
\newcommand{\LL}{\mathcal {L}}
\newcommand{\HH}{\mathcal{H}}
\let\Re\undefined
\let\Im\undefined
\DeclareMathOperator{\Re}{\mathrm{Re}}
\DeclareMathOperator{\Im}{\mathrm{Im}}
\begin{document}

\title[Double-layer potentials, configuration constants and applications]{Double-layer potentials, configuration constants and applications to numerical ranges}

\author{Bartosz Malman}
\author{Javad Mashreghi}
\author{Ryan O'Loughlin}
\author{Thomas Ransford}

\address{Division of Mathematics and Physics, 
        Mälardalen University,
		Västerås, Sweden}
\email{bartosz.malman@mdu.se} 

\address{Département de mathématiques et de statistique, Université Laval, Québec (QC), G1V 0A6, Canada}
\email{javad.mashreghi@mat.ulaval.ca}

\address{Département de mathématiques et de statistique, Université Laval, Québec (QC), G1V 0A6, Canada}
\email{ryan.oloughlin.1@ulaval.ca}

\address{Département de mathématiques et de statistique, Université Laval, Québec (QC), G1V 0A6, Canada}
\email{thomas.ransford@mat.ulaval.ca}

\begin{abstract}
Given a compact convex planar domain $\Omega$ with non-empty interior, the classical Neumann's configuration constant $c_\RR(\Omega)$ is the norm of the Neumann-Poincaré operator $K_\Omega$ acting on the space of continuous real-valued functions on the boundary $\partial \Omega$, modulo constants. We investigate the related operator norm $c_\CC(\Omega)$ of $K_\Omega$ on the corresponding space of complex-valued functions, and the norm $a(\Omega)$ on the subspace of analytic functions. This change requires introduction of techniques much different from the ones used in the classical setting. We prove the equality $c_\RR(\Omega) = c_\CC(\Omega)$, the analytic Neumann-type inequality $a(\Omega) < 1$, and provide various estimates for these quantities expressed in terms of the geometry of $\Omega$. We apply our results to estimates for the holomorphic functional calculus of operators on Hilbert space of the type $\|p(T)\| \leq K \sup_{z \in \Omega} |p(z)|$, where $p$ is a polynomial and $\Omega$ is a domain containing the numerical range of the operator $T$. Among other results, we show that the well-known Crouzeix-Palencia bound $K \leq 1 + \sqrt{2}$ can be improved to $K \leq 1 + \sqrt{1 + a(\Omega)}$. In the case that $\Omega$ is an ellipse, this leads to an estimate of $K$ in terms of the eccentricity of the ellipse.
\end{abstract}

\thanks{This work has been done during Malman's visit at Département de mathématiques et de statistique, Université Laval, supported by Simons-CRM Scholar-in-Residence program. Mashreghi's research supported by an NSERC Discovery Grant and the Canada Research Chairs program. O'Loughlin supported by a CRM-Laval Postdoctoral Fellowship. Ransford's research supported by an NSERC Discovery Grant.}

\maketitle

\section{Introduction}

\label{S:IntroSec}

\subsection{Double-layer potential} 
Throughout this article, $\Omega$ will denote a compact convex planar domain with non-empty interior. If $C(\partial \Omega)$ is the space of continuous functions on the boundary $\partial \Omega$ and $f \in C(\partial \Omega)$, then its \textit{double-layer potential} $u$ is the harmonic function
\begin{equation}\label{E:DLformula}
u(z) = \frac{1}{\pi} \int_{\partial \Omega} f(\sigma) \, d \arg( \sigma - z) = \frac{1}{\pi} \int_{\partial \Omega} f(\sigma) \,\Re \Bigg( \frac{N(\sigma)}{\sigma - z} \Bigg) ds, \quad z \in \Omega^o.
\end{equation} Here $ds = |d\sigma|$ is the arclength measure on the rectifiable curve $\partial \Omega$, $\Omega^o$ is the interior of $\Omega$, and $N(\sigma)$ is the outer-pointing normal at the boundary point $\sigma$. The equality between the two expressions for $u(z)$ above follows from an elementary computation in the case that $\partial \Omega$ is sufficiently smooth. In the general case, we interpret $N(\sigma)(\sigma - z)^{-1}$ as a Borel measurable function on $\partial \Omega$. By convexity of the domain, both the tangent $T(\sigma)$ and the normal $N(\sigma)$ exist and are continuous at all but a countable number of points $\sigma$ which we will call \textit{corners}, at which the discontinuity of $T$ and $N$ amounts to a jump in the argument. In Appendix~\ref{sec:appendixA} we include more details regarding boundaries of planar convex domains, and other facts mentioned below.
 
The Neumann--Poincaré operator appears in connection with the study of boundary behaviour of the double-layer potential. It is known that $u$ given by \eqref{E:DLformula} has a continuous extension to $\partial \Omega$, and we have the representation
\begin{equation}\label{E:uKf-boundaryEq}
u(\zeta) = f(\zeta) + K_\Omega f(\zeta), \quad \zeta \in \partial \Omega
\end{equation}
where $K_\Omega$ denotes the \textit{Neumann-Poincaré integral operator}
\[ 
K_\Omega f(\zeta) := \frac{1}{\pi} \int_{\partial \Omega} f(\sigma)\, d\mu_\zeta( \sigma), \quad \zeta \in \partial \Omega.
\] Here $\mu_\zeta$ is the probability measure 
\begin{equation}\label{E:MuZetaEq}
d\mu_\zeta = (1-\theta_\zeta / \pi) d \delta_\zeta + \rho_\zeta ds
\end{equation} where $\theta_\zeta$ can be interpreted as the angle of the aperture at the possible corner at $\zeta$ of $\partial \Omega$, $\delta_\zeta$ is a unit mass at the point $\zeta$, and $\rho_\zeta$ is the Radon-Nikodym derivative 
\begin{equation}\label{E:MuZetaDensityFormula}
\rho_\zeta(\sigma) := \frac{d \mu_\zeta}{ds}(\sigma) := \frac{1}{\pi}\Re \Bigg( \frac{N(\sigma)}{\sigma - \zeta} \Bigg) = \frac{1}{\pi}\Im \Bigg( \frac{T(\sigma)}{\sigma - \zeta}\Bigg).
\end{equation} It is natural to use the convention that $\theta_\zeta = \pi$ if $\zeta$ is not a corner. This occurs precisely when $\mu_\zeta$ assigns no mass to the singleton $\{\zeta\}$. We will say that the collection of measures $\{\mu_\zeta\}_{\zeta\in \partial \Omega}$ is the \textit{Neumann-Poincaré kernel} of $\Omega$.

The density $\rho_\zeta$ has the following useful geometric interpretation. If $\sigma \in \partial \Omega \setminus \{\zeta\}$ is not a corner, and $R_{\zeta, \sigma}$ is the radius of the unique circle passing through $\zeta$ which is tangent to $\partial \Omega$ at $\sigma$, then the equality
\begin{equation}
\label{E:DensityCircleRadiusFormula}
\rho_\zeta(\sigma) = \frac{1}{2 \pi R_{\zeta,\sigma}}
\end{equation} holds. The radius $R_{\zeta, \sigma}$ may degenerate to $\infty$ if $\zeta$ is contained in the tangent line to $\partial \Omega$ passing through $\sigma$. In that case we see easily that $\rho_\zeta(\sigma) = 0$, so \eqref{E:DensityCircleRadiusFormula} still holds. To establish the formula, note that the center $m$ of the circle in question is of the form $m = \sigma - R N(\sigma)$, where the radius $R = R_{\zeta,\sigma} > 0$ of the circle satisfies $|m - \zeta|^2 = |(\sigma - \zeta) - R N(\sigma)|^2 = R^2$. Expanding the squares and solving for $R$ leads to \eqref{E:DensityCircleRadiusFormula}. 

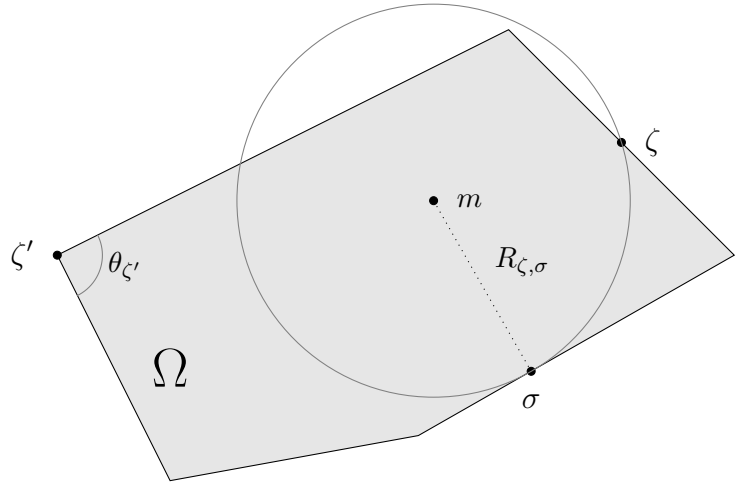
\begin{figure}
\centering
\begin{tikzpicture}[scale=3]
    \draw[fill=gray!20] (1,0) -- (0, 01) -- (-2, 0) -- (-1.5, -1) -- (-0.4, -0.8) --(1, 0) -- cycle;
    \node at (-1.5, -0.5) {\huge $\Omega$};
    \node at (-1.7, -0.05) {$\theta_{\zeta'}$};
    \filldraw (-2, 0) circle (0.5 pt) node[left, xshift=-5pt]{\large $\zeta'$};
    \filldraw (0.5, 0.5) circle (0.5 pt) node[right, xshift= 5pt]{\large $\zeta$};
    \filldraw (0.1, -0.515) circle (0.5 pt) node[below, yshift=-5pt]{\large $\sigma$};
    \filldraw (-0.3324, 0.2417) circle (0.5 pt) node[right, xshift=5pt]{$m$};
    \draw [gray, domain=-64:27] plot ({0.2*cos(\x)-2}, {0.2*sin(\x)});
    \draw[color=gray] (-0.3324, 0.2417) circle (0.8715);
    \draw[ dotted] (-0.3324, 0.2417) -- (0.1, -0.515) node[midway, above, xshift= 15 pt]{$R_{\zeta, \sigma}$};
\end{tikzpicture}
\caption{Example domain $\Omega$ with corner of angle $\theta_{\zeta'}$ at $\zeta'$, and a circle of radius $R_{\zeta, \sigma}$ with center $m$, tangent to $\partial \Omega$ at $\sigma$ and passing through $\zeta$.}
\label{fig:fig0}
\end{figure}

\subsection{Neumann's configuration constant}

\subsubsection{Real configuration constant} Historically, the Neumann--Poincaré operator has been used to solve the \textit{Dirichlet problem} of finding a harmonic extension to $\Omega^o$ of a given continuous function $u$ on $\partial \Omega$. The extension can be obtained by finding $f \in C(\partial \Omega)$ which solves \eqref{E:uKf-boundaryEq}. Indeed, if such an $f$ is found, then the extension of $u$ to $\Omega^o$ is given by the double-layer potential in \eqref{E:DLformula}. This naturally leads to questions of invertibility of the operator $I + K_\Omega$ appearing on the right-hand side of \eqref{E:uKf-boundaryEq}, and consequently to the introduction of the \textit{Neumann's configuration constant}, which we shall soon define as the operator norm of $K_\Omega$ acting on an appropriate space. Note that if $\mathbf{1}$ is the constant function, then we have that $K_\Omega \mathbf{1} = \mathbf{1}$, since each $\mu_\zeta$ is a probability measure. Thus $K_\Omega$ can be naturally defined as a linear mapping on the quotient space $C(\partial \Omega)/\CC \mathbf{1}$. The classical approach is to instead consider $K_\Omega$ as acting on the space of \textit{real-valued} continuous functions $C_\RR(\partial \Omega)$, in which case the corresponding quotient space $C_\RR(\partial \Omega)/\RR \mathbf{1}$ is endowed with the norm 

\begin{equation} \label{E:RealQuotientNorm}
\| g + \RR \mathbf{1} \|_{\partial \Omega} := \max_{\zeta, \zeta' \in \partial \Omega} \frac{|g(\zeta) - g(\zeta')|}{2} = \min_{r \in \RR} \, \max_{\zeta \in \partial \Omega} |g(\zeta) - r|.
\end{equation}
It is not hard to see that the two above expressions for the norm of the coset $g + \RR \mathbf{1}$ are equivalent: they are both equal to half of the length of the interval $g(\partial \Omega) := \{ g(\partial \Omega) : \zeta \in \partial \Omega\}$, the image of $g$. The right-most expression is minimized by choosing $r$ to be the mid-point of the image interval. Neumann's (real) configuration constant $c_\RR(\Omega)$ is defined as the operator norm of $K_\Omega$ acting on the quotient space $C_\RR(\partial \Omega)/ \RR \mathbf{1}$:
\begin{equation}\label{E:RconfigConstDef}
c_\RR(\Omega) :=  \| K_\Omega : C_\RR(\partial \Omega)/ \RR \mathbf{1} \to C_\RR(\partial \Omega)/ \RR \mathbf{1} \|.
\end{equation} 
It is not hard to see that we may let $K_\Omega$ instead act from $C_\RR(\partial \Omega)$ into the quotient $C_\RR(\partial \Omega) / \RR \mathbf{1}$ without affecting the operator norm. Since each measure $\mu_\zeta$ is of unit mass, we have $0 \leq c_\RR(\Omega) \leq 1$. If 
\[ \|f \|_{\partial \Omega} := \sup_{\zeta \in \partial \Omega} |f(\zeta)| \leq 1, \]
then \[|K_\Omega f(\zeta) - K_\Omega f(\zeta')| \leq \| \mu_{\zeta} - \mu_{\zeta'}\|,\] where we use the total variation norm (functional norm) on the right-hand side. By varying $f$ over the unit ball of $C_\RR(\partial \Omega)$ and $\zeta, \zeta'$ over $\partial \Omega$, we obtain the important relation
\begin{equation} \label{E:RconfigConstantFormula}
c_\RR(\Omega) := \sup_{\zeta, \zeta' \in \partial \Omega} \frac{\| \mu_{\zeta} - \mu_{\zeta'}\|}{2}.
\end{equation}
This expression for $c_\RR(\Omega)$ will play a fundamental role in our study.

\subsubsection{Neumann's lemma}
From \eqref{E:RconfigConstantFormula} we can immediately deduce that $c_\RR(\Omega) = 1$ in the case that $\Omega$ is a triangle or a convex quadrilateral. Indeed, in those cases one sees from \eqref{E:MuZetaEq} and \eqref{E:MuZetaDensityFormula} that if $\zeta_1$ and $\zeta_2$ are corners of $\Omega$ (opposing, in the case of the quadrilateral) then $\mu_{\zeta_1}$ and $\mu_{\zeta_2}$ are mutually singular, and so $\| \mu_{\zeta_1} - \mu_{\zeta_2}\| = 2$, implying $c_\RR(\Omega) = 1$. \textit{Neumann's lemma}, which appears initially in Neumann's book \cite{Ne1877}, states that the cases of the triangle and quadrilateral are exceptional. For any other type of domain we have the strict inequality $c_\RR(\Omega) < 1$. See \cite{Sch68} for a proof of this claim by Schober, and the curious history of incomplete attempts at a valid proof in full generality. Neumann's lemma implies the invertibility of $I + K_\Omega$ on $C_\RR(\partial \Omega)/ \RR \mathbf{1}$, and thus the solvability of the Dirichlet problem on a convex domain $\Omega$ which is not one of the two exceptional cases. The remaining cases can be handled by considering instead powers of $K_\Omega$. See, for instance, \cite[Theorem 3.8]{Kr80}, \cite[Proposition 7]{DD99}, or the article \cite{PS05}, which contains also an exposition of the double-layer potential and Neumann's lemma.

At the other extreme, we have $c_\RR(\Omega) = 0$ if and only if $\Omega$ is a disk. This result will be proved in Section~\ref{S:ExamplesSection}.

\subsection{Complex and analytic configuration constants}

\subsubsection{Two new configuration constants} In the present article we will discuss certain applications of the double-layer potential to operator theory which motivate the definition of the \textit{complex configuration constant}
\begin{equation}\label{E:CconfigConstDef}
c_\CC(\Omega) :=  \| K_\Omega : C(\partial \Omega)/ \CC \mathbf{1} \to C(\partial \Omega)/ \CC \mathbf{1} \|.
\end{equation} The difference between \eqref{E:RconfigConstDef} and \eqref{E:CconfigConstDef} is that the latter is the norm of $K_\Omega$ on the larger space of complex-valued functions. As a consequence, we have $c_\RR(\Omega) \leq c_\CC(\Omega) \leq 1$. There is a principal difference between the geometric interpretations of the norms in the quotient spaces $C_\RR(\partial \Omega) / \RR \mathbf{1}$ and $C(\partial \Omega) / \CC \mathbf{1}$. In the former case, as we have already noted, the norm \eqref{E:RealQuotientNorm} of the coset represented by the real-valued function $g$ is equal to half of the length of the image of $g$, this image being an interval on the real line $\RR$. In the case of complex-valued $g$, the quotient norm 
\begin{equation}\label{E:COmplexQuotientNorm}
\| g + \CC \mathbf{1}\|_{\partial \Omega} := \min_{\lambda \in \CC} \max_{\zeta \in \partial \Omega} | g(\zeta) - \lambda|
\end{equation} can instead be interpreted as the \textit{radius of the smallest disk containing the image of $g$}. A crucial difference is that we lose the ability to estimate the norm of the coset $g + \CC \mathbf{1}$ by considering the quantities $|g(\zeta) - g(\zeta')|$ only. This is the essence of why new tools are required to treat this case.

We will also study an analogous analytic constant, which is the norm of the operator $K_\Omega$ restricted to the subspace of analytic functions in $C(\partial \Omega)$. More precisely, we let $\A(\Omega)$ be the space of functions which are continuous in $\Omega$ and analytic in $\Omega^o$. Each function in $\A(\Omega)$ has a unique restriction to $\partial \Omega$, and thus $\A(\Omega)$ can be naturally identified with a subspace of $C(\partial \Omega)$. We define the \textit{analytic configuration constant} as
\begin{equation}\label{E:AconfigConstDef}
a(\Omega) :=  \| K_\Omega : \A(\Omega) / \CC\mathbf{1} \to C(\partial \Omega) / \CC\mathbf{1} \|.
\end{equation}
The space $\A(\Omega)$ is not invariant under $K_\Omega$, but we do have that $K_\Omega f$ is the complex conjugate of a function in $\A(\Omega)$ (in \cite[proof of Lemma 2.1]{CP17} this claim is established for $\Omega$ with smooth boundary, but the same argument works in general). Clearly, we have the inequality $a(\Omega) \leq c_\CC(\Omega)$. We note also that if $\widetilde{\Omega}$ is the image of $\Omega$ under an affine transformation of the plane, then the configuration constants of the two domains are equal. We shall verify this claim in Section~\ref{S:ApplNumRan}.

\subsubsection{An application to functional calculi} Given an operator $T$ on a Hilbert space $\mathcal{H}$ with \textit{numerical range}
\[
W(T) := \{ \langle Tx, x \rangle_\HH : x \in \HH, \|x\|_{\HH} = 1\},
\] we are interested in the optimal constant $K > 0$ in the inequality
\begin{equation}\label{E:PolFuncCalcBound}
\|p(T)\| \leq K\cdot \sup_{z \in W(T)} |p(z)| = K \|p\|_{W(T)},
\end{equation}
where $p$ is an analytic polynomial, and the left-hand side is the operator norm of $p(T)$ acting on $\mathcal{H}$. More generally, if $W(T)$ in \eqref{E:PolFuncCalcBound} is replaced by an arbitrary domain $\Omega$, and if the corresponding inequality holds for some $K$, then we say that $\Omega$ is a \textit{$K$-spectral set} for $T$. Von Neumann's inequality says that the unit disk is a $1$-spectral set for any contraction $T$, and a result of Okubo-Ando from \cite{OA75} says that any disk containing $W(T)$ is a $2$-spectral set for $T$.

The numerical range $W(T)$ is a bounded convex subset of the plane, its closure $\overline{W(T)}$ contains the spectrum $\sigma(T)$ of $T$, and it has non-empty interior in the case that $T$ is not a normal operator (see, for instance, \cite[Chapter 1]{GR96}). For normal operators, the bound \eqref{E:PolFuncCalcBound} with constant $K = 1$ is a consequence of the spectral theorem, and it suffices to take the supremum on the right-hand side over the smaller set $\sigma(T)$. For general $T$, even establishing the existence of a bound as in \eqref{E:PolFuncCalcBound} is a non-trivial task. A result of Delyon-Delyon from \cite[Theorem 3]{DD99} establishes the existence of the bound, and shows that $K$ can be chosen depending only on the area and the diameter of $W(T)$. The remarkable work of Crouzeix in \cite{Cr07} establishes that \eqref{E:PolFuncCalcBound} holds with $K \leq 11.08$. A subsequent work of Crouzeix and Palencia in \cite{CP17} improves the estimate to $K \leq 1 + \sqrt{2}$. The Neumann--Poincaré operator appears as an essential tool in all of the mentioned works.
The standing conjecture of Crouzeix from \cite{Cr04} is that the bound holds with $K = 2$. This bound is presently known to hold in the case $\mathcal{H}$ being of dimension 2, and has been established by Crouzeix in \cite{Cr04}.

Our interest in the new notions of configuration constants is inspired by a recent work of Schwenninger and de Vries in \cite{SdV23}, where bounds for general homomorphisms between uniform algebras and the algebras of bounded linear operators are studied. In Section~\ref{S:ApplNumRan} we will combine their arguments with the methods of Crouzeix-Palencia to obtain the following estimate:
\begin{equation}\label{E:PolFuncCalcBoundAOmega}
\|p(T)\| \leq \bigg( 1 + \sqrt{1 + a(W)} \bigg)  \| p \|_W, \quad W := \overline{W(T)}.
\end{equation} For instance, if $W$ is a disk, then $a(W) = 0$, which gives the Okubo-Ando result mentioned above. In \cite{SdV23}, Schwenninger and de Vries recovered this result also. The estimate \eqref{E:PolFuncCalcBoundAOmega} is our motivation for the following investigation of the configuration constants $c_\RR(\Omega), c_\CC(\Omega)$ and $a(\Omega)$, and the relations between them.

\subsection{Main results}

\subsubsection{Relation between the real and complex constants} 

Consider the situation in Figure~\ref{fig:fig1}, where the triangular image of the complex-valued function $g: \partial \Omega \to \CC$ is contained in a disk of radius $1$, and intersects the boundary circle of the disk in three distinct points. The three-point set $\{g(\zeta_1), g(\zeta_2), g(\zeta_3)\}$ is not contained in any open half-circle of the boundary, and it follows from a simple geometric argument (which we shall present in the proofs below) that $\| g + \CC \mathbf{1}\|_{\partial \Omega} = 1$. However, the sides of the triangular image of $g$ are all of lengths strictly less than $2$, and this implies that \[\| g + \CC \mathbf{1}\|_{\partial \Omega} = 1 > \max_{\zeta, \zeta' \in \partial \Omega} \frac{|g(\zeta)- g(\zeta')|}{2}.\] If such a function $g$ lies in the image of the unit ball of $C(\partial \Omega)$ under the Neumann--Poincaré operator $K_\Omega$ for some domain $\Omega$ which satisfies $c_\RR(\Omega) < 1$, then a strict inequality $c_\RR(\Omega) < c_\CC(\Omega)$ occurs. Our first main result excludes this possibility, and so establishes the simplest possible relation between the real and complex configuration constants.

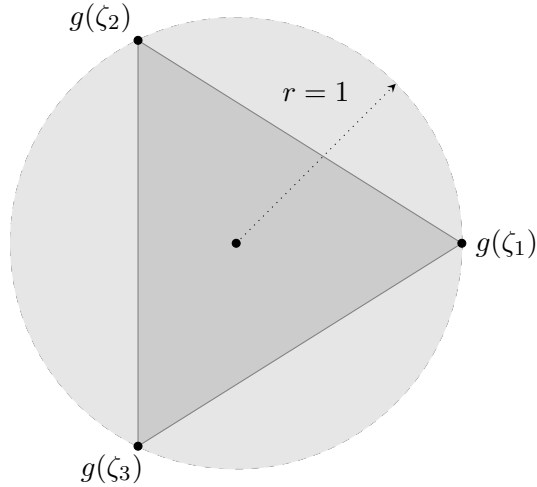
\begin{figure}
    \centering
    
    \begin{tikzpicture}[scale=3]
    \draw[dashed] (0,0) circle (1);
    \filldraw[gray!20] (0, 0) circle (1);
    
	\filldraw[draw=black!50, fill=gray!40] (-0.435, 0.9) -- (1,0) -- (-0.435, -0.9) -- cycle;
   
    \filldraw (0,0) circle (0.5pt);
    \draw[-stealth, dotted] (0,0) -- (45:1) node[midway,above, yshift=20]{$r = 1$};
    
    \node at (1.20, 0) {$g(\zeta_1)$};
    \filldraw (1, 0) circle (0.5pt);  
    
    \node at (-0.60, 1) {$g(\zeta_2)$};
    \filldraw (-0.435, 0.9) circle (0.5pt);  
    
    \node at (-0.55, -1) {$g(\zeta_3)$};
    \filldraw (-0.435, -0.9) circle (0.5pt);       

\end{tikzpicture}

     \caption{A triangular image of a complex-valued function $g$ contained in a disk of radius $1$, with three points on the boundary of a disk.}
    \label{fig:fig1}
\end{figure}

\begin{theorem} \label{T:MainTheorem1}
The equality 
\[
c_\RR(\Omega) = c_\CC(\Omega)
\] holds for every compact convex domain $\Omega$ with non-empty interior.
\end{theorem}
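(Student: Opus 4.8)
The inequality $c_\RR(\Omega)\le c_\CC(\Omega)$ is already recorded in the text (it amounts to the observation that the inclusion of $C_\RR(\partial\Omega)/\RR\mathbf{1}$ into $C(\partial\Omega)/\CC\mathbf{1}$ is isometric, since the smallest disk containing a real interval is centred on $\RR$). So the whole content is the reverse inequality, and the plan is to prove that $\|K_\Omega f+\CC\mathbf{1}\|_{\partial\Omega}\le c_\RR(\Omega)$ for every $f\in C(\partial\Omega)$ with $\|f\|_{\partial\Omega}\le 1$; taking the supremum over such $f$ then gives $c_\CC(\Omega)\le c_\RR(\Omega)$.

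Fix such an $f$, write $u:=K_\Omega f$ so that $u(\zeta)=\int_{\partial\Omega}f\,d\mu_\zeta$, and let $\overline{D(\lambda,R)}$ be the smallest closed disk containing the compact set $S:=u(\partial\Omega)$; by \eqref{E:COmplexQuotientNorm} we have $R=\|u+\CC\mathbf{1}\|_{\partial\Omega}$, and it suffices to show $R\le c_\RR(\Omega)$. The elementary geometry of smallest enclosing disks yields the dichotomy foreshadowed by Figure~\ref{fig:fig1}: either (a) there are $\zeta,\zeta'\in\partial\Omega$ with $|u(\zeta)-u(\zeta')|=2R$, or (b) there are $\zeta_1,\zeta_2,\zeta_3\in\partial\Omega$ with $|u(\zeta_j)-\lambda|=R$ for $j=1,2,3$ and weights $t_1,t_2,t_3>0$, $\sum_j t_j=1$, with $\lambda=\sum_j t_j u(\zeta_j)$ (so $\lambda$ is the circumcentre of an acute triangle inscribed in $\partial D(\lambda,R)$). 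Case (a) is immediate: since $\|f\|_{\partial\Omega}\le 1$,
\[
2R=|u(\zeta)-u(\zeta')|=\Bigl|\int_{\partial\Omega}f\,d(\mu_\zeta-\mu_{\zeta'})\Bigr|\le\|\mu_\zeta-\mu_{\zeta'}\|\le 2\,c_\RR(\Omega)
\]
by \eqref{E:RconfigConstantFormula}. Everything hangs on case (b), which is exactly the phenomenon the theorem is meant to exclude.

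In case (b), set $v_j:=(u(\zeta_j)-\lambda)/R$; these are unit vectors with $\sum_j t_j v_j=0$. The plan is to introduce the \emph{balanced} complex measure
\[
M:=\sum_{j=1}^{3}t_j\,\overline{v_j}\,\mu_{\zeta_j},\qquad M(\partial\Omega)=\sum_j t_j\overline{v_j}=0,
\]
and to note, using that the $\mu_{\zeta_j}$ are probability measures and $u(\zeta_j)=\lambda+Rv_j$, that $\int_{\partial\Omega}f\,dM=\sum_j t_j\overline{v_j}\,u(\zeta_j)=R\sum_j t_j=R$. Hence $R=\bigl|\int f\,dM\bigr|\le\|f\|_{\partial\Omega}\,\|M\|\le\|M\|$, and the problem is reduced to the single key estimate $\|M\|\le c_\RR(\Omega)$.

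For the key estimate I would use positivity. Choose a probability measure $\kappa$ dominating $\mu_{\zeta_1},\mu_{\zeta_2},\mu_{\zeta_3}$ (for instance $\kappa=\tfrac13\sum_j\mu_{\zeta_j}$) and write $d\mu_{\zeta_j}=d_j\,d\kappa$ with $d_j\ge 0$; identifying $\CC$ with $\RR^2$, $\|M\|=\int_{\partial\Omega}\bigl\|\sum_j t_j v_j\,d_j(\sigma)\bigr\|\,d\kappa(\sigma)$. Since $\sum_j t_j v_j=0$, the integrand is unchanged by a common shift of $d_1(\sigma),d_2(\sigma),d_3(\sigma)$, so it depends only on the differences $d_j(\sigma)-d_k(\sigma)$; the aim is to dominate it pointwise by a fixed nonnegative combination $\sum_{j<k}c_{jk}\,|d_j(\sigma)-d_k(\sigma)|$ with $\sum_{j<k}c_{jk}\le\tfrac12$, whence, integrating and using \eqref{E:RconfigConstantFormula} again, $\|M\|\le\sum_{j<k}c_{jk}\,\|\mu_{\zeta_j}-\mu_{\zeta_k}\|\le 2c_\RR(\Omega)\sum_{j<k}c_{jk}\le c_\RR(\Omega)$. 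The decisive point that keeps $\sum c_{jk}$ small enough is that the three signed differences $d_1-d_2,\ d_2-d_3,\ d_3-d_1$ sum to zero, and for three reals summing to zero one has $\sqrt{a^2+b^2+c^2}\le\tfrac1{\sqrt2}(|a|+|b|+|c|)$ — a gain of a factor $\sqrt2$ over the triangle inequality. This is short and clean when the triangle $u(\zeta_1)u(\zeta_2)u(\zeta_3)$ is equilateral (the $v_j$ are then at $120^{\circ}$, the Gram computation gives $\bigl\|\sum_j t_jv_jd_j\bigr\|=\tfrac{1}{3\sqrt2}\sqrt{\sum_{j<k}(d_j-d_k)^2}$, and one gets $c_{jk}=\tfrac16$). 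I expect the main obstacle to be the general acute configuration, where the $v_j$ are no longer at $120^{\circ}$, the Gram matrix need not be a nonnegative combination of the "difference" quadratic forms, and one must set up the pointwise bound more carefully — presumably with a case analysis according to which of $d_1(\sigma),d_2(\sigma),d_3(\sigma)$ is largest and smallest, while tracking the weights $t_j$. Once case (b) is settled, cases (a) and (b) together give $R\le c_\RR(\Omega)$, hence $c_\CC(\Omega)\le c_\RR(\Omega)$, and with the easy reverse inequality this yields $c_\RR(\Omega)=c_\CC(\Omega)$.
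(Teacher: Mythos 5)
Your easy half and your reduction to case (b) match the paper's setup closely (Lemmas~\ref{L:arcLemma} and \ref{L:3PointSupportingSubsetLemma} are exactly your circumcenter dichotomy). But there \emph{is} a genuine gap, and it is precisely where you flag it: the pointwise bound $\bigl|\sum_j t_jv_jd_j(\sigma)\bigr|\le\sum_{j<k}c_{jk}\,|d_j(\sigma)-d_k(\sigma)|$ with $\sum_{j<k}c_{jk}\le\tfrac12$ is the whole theorem, and you only establish it for the equilateral configuration. Moreover the route you sketch for the general case — Gram matrix plus the $\ell^2$--$\ell^1$ inequality for zero-sum triples — will not extend, because the inequality you need is not quadratic in nature: both sides are positively homogeneous of degree \emph{one}, and for nonequilateral $v_j$ the Gram form $\bigl|\sum_j t_jv_jd_j\bigr|^2$ genuinely fails to be a nonnegative combination of the forms $(d_j-d_k)^2$. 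So what you have written is an outline with the central estimate open, not a proof.

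The good news is that your approach can be completed, and in a clean way. Set $\alpha_j:=t_j\overline{v_j}$, so $\sum_j\alpha_j=0$ and $t_j=|\alpha_j|$. Take
$c_{jk}:=\tfrac12\bigl(|\alpha_j|+|\alpha_k|-|\alpha_l|\bigr)$ for $\{j,k,l\}=\{1,2,3\}$. These are forced by testing your inequality on the three ``elementary'' directions $(d_1,d_2,d_3)\in\{(1,0,0),(0,1,0),(0,0,1)\}$ up to common shift, and they satisfy $\sum_{j<k}c_{jk}=\tfrac12\sum_j|\alpha_j|=\tfrac12$. Crucially $c_{jk}\ge0$ not because the triangle is acute but simply because $|\alpha_l|=|\alpha_j+\alpha_k|\le|\alpha_j|+|\alpha_k|$; acuteness was a red herring for this point. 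To prove the pointwise inequality, use the shift-invariance to set $a=d_1-d_3$, $b=d_2-d_3$, so the left side becomes $|\alpha_1a+\alpha_2b|$ and the right side $c_{12}|a-b|+c_{13}|a|+c_{23}|b|$; both are degree-one homogeneous, so it suffices to verify it on the boundary of the polygon $\{c_{12}|a-b|+c_{13}|a|+c_{23}|b|\le1\}$, whose vertices lie on the three lines $a=0$, $b=0$, $a=b$, that is, at scalar multiples of $(1,0)$, $(0,1)$, $(1,1)$ and their negatives. At each of these you get \emph{equality} by construction; the left side is convex, the right side is affine on each face, so the inequality propagates from the vertices. Integrating against $d\kappa$ then gives $\|M\|\le\sum_{j<k}c_{jk}\|\mu_{\zeta_j}-\mu_{\zeta_k}\|\le c_\RR(\Omega)$.

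It is worth noting how this compares to the paper's route. The paper proves a stronger, free-standing statement (the three-measures theorem, Theorem~\ref{T:3MT}): the operator $\LL:C(X)\to\CC^3/\CC\mathbf1$ built from three real measures has norm \emph{exactly} $\tfrac12\max_{j,k}\|\mu_j-\mu_k\|$. This is obtained by passing to the dual (Proposition~\ref{P:3MTDualProblem}), discretizing (Proposition~\ref{P:3MTDiscreteFormulation}), and classifying the extreme points of the polytope $C_n$ in $\RR^n\times\RR^n$ up to symmetry (Proposition~\ref{P:PolytopeExtrPoints}) — a longer but more general analysis that also yields the $n$-measure theorem (Theorem~\ref{T:NMT}). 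Your argument, once the above step is filled in, proves the same dual inequality by exhibiting the coefficients $c_{jk}$ explicitly, without any discretization or extreme-point classification; it is shorter for the purpose of Theorem~\ref{T:MainTheorem1} alone, but it does not by itself give the operator-norm \emph{identity} as a separate statement (though the identity follows easily, since the lower bound is trivial).
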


It follows that every considered domain has a well-defined \textit{configuration constant} $c(\Omega)$, which is equal to the operator norm of $K_\Omega$ on $C(\partial \Omega) / \CC \mathbf{1}$, and which can be computed according to the right-hand side of \eqref{E:RconfigConstantFormula}. An important consequence of this result is the inequality 
\begin{equation} \label{E:AconstEstimateByRconst}
a(\Omega) \leq c(\Omega) = \sup_{\zeta, \zeta' \in \partial \Omega} \frac{\| \mu_{\zeta} - \mu_{\zeta'}\|}{2},
\end{equation} which, as we shall soon see, has some interesting consequences.

Theorem~\ref{T:MainTheorem1} doesn't appear nearly as straightforward to prove as it is to state, and the proof takes up a large portion of the article. However, the only property of the Neumann-Poincaré operator used in the proof is that its integral kernel $\{\mu_\zeta\}_{\zeta \in \partial \Omega}$ consists of real-valued measures. In fact, the theorem will be deduced as a corollary of a result which we call the \textit{Three-measures theorem}, and which is a general statement regarding the geometry of the space $C(X)$ of continuous functions on a compact Hausdorff space $X$. This result, which we discuss and prove in Section~\ref{S:3MSection}, puts a restriction on the possible configurations of point sets in the plane which arise as values of a collection of real-valued functionals on $C(X)$. 

\subsubsection{Analytic Neumann's lemma} Note that the above estimate in \eqref{E:AconstEstimateByRconst}, together with Neumann's lemma, implies that $a(\Omega) < 1$ whenever $\Omega$ is not a triangle or a quadrilateral. This can be improved, for we have an analytic version of Neumann's lemma, in which no exceptional cases occur. 

\begin{theorem} \label{T:Maintheorem2} The strict inequality
\[ a(\Omega) < 1 \] holds for every compact convex domain $\Omega$ with non-empty interior.
\end{theorem}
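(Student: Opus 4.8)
The plan is to prove the strict inequality $a(\Omega) < 1$ by combining the quotient-norm interpretation of $K_\Omega$ with the strict subharmonicity (really, the strong maximum principle) available for the double-layer potentials of analytic functions. The starting observation is that for $f \in \A(\Omega)$, the boundary representation \eqref{E:uKf-boundaryEq} gives $K_\Omega f = u - f$ on $\partial \Omega$, where $u$ is the double-layer potential \eqref{E:DLformula}, which is harmonic in $\Omega^o$ and extends continuously to $\partial \Omega$. Since $f$ itself is analytic in $\Omega^o$, the function $u - f$ is itself harmonic (indeed the sum of a harmonic function and an anti-analytic one, matching the remark in the excerpt that $K_\Omega f$ is the conjugate of a function in $\A(\Omega)$). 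The quantity $\|K_\Omega f + \CC\mathbf 1\|_{\partial\Omega}$ is, by \eqref{E:COmplexQuotientNorm}, the radius of the smallest disk containing the set $(u-f)(\partial\Omega)$, and by the maximum principle this set is contained in the closure of $(u-f)(\Omega^o)$; so it suffices to control the diameter-type spread of $u-f$ on all of $\overline\Omega$.

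The key step will be a \emph{uniform} gap estimate: I claim there is a constant $\varepsilon = \varepsilon(\Omega) > 0$ such that for every $f \in \A(\Omega)$ with $\|f+\CC\mathbf 1\|_{\partial\Omega} \le 1$ we have $\|K_\Omega f + \CC\mathbf 1\|_{\partial\Omega} \le 1 - \varepsilon$. By \eqref{E:AconstEstimateByRconst} together with Neumann's lemma we already know $a(\Omega) \le c(\Omega) < 1$ whenever $\Omega$ is not a triangle or convex quadrilateral, so the entire content is in those two exceptional cases, and there we want an argument that does not go through the total-variation estimate \eqref{E:RconfigConstantFormula} (which gives $1$). The mechanism that saves us is that the bad pairs of measures $\mu_{\zeta_1}, \mu_{\zeta_2}$ — the mutually singular pair living at opposite corners — are only "seen" by $K_\Omega$ through analytic test functions $f$, and an analytic function cannot be simultaneously close to one extreme value near $\zeta_1$ and close to another extreme value near $\zeta_2$ while staying bounded by $1$ in the disk-radius sense: the harmonic conjugate (equivalently, analyticity) forces a definite amount of averaging. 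Concretely, I would fix the smallest disk $D(\lambda, 1)$ containing $f(\partial\Omega)$, normalize $\lambda = 0$, and analyze the harmonic function $v := \Re(\overline{\alpha}(u-f))$ for a suitable unimodular $\alpha$ chosen so that a diametral pair of values of $u-f$ is realized along the direction $\alpha$; one then shows that $v$ attains values near $+1$ and near $-1$ only at boundary points, and uses a Harnack-type / two-constants estimate inside $\Omega^o$ to produce a fixed shrinkage. A cleaner route, which I would try first, is a normal-families compactness argument: if $a(\Omega) = 1$ there exist $f_n \in \A(\Omega)$, $\|f_n + \CC\mathbf 1\| \le 1$, with $\|K_\Omega f_n + \CC\mathbf 1\| \to 1$; after subtracting constants, translating, and passing to a subsequence, $f_n \to f$ locally uniformly in $\Omega^o$ to some $f \in \A(\Omega)$ (using that bounded-in-sup analytic functions form a normal family, plus equicontinuity up to the boundary from the double-layer representation), and one derives that the extremal $f$ would force the anti-analytic function $K_\Omega f$ to have a genuine interior maximum of modulus-of-spread, contradicting the maximum principle — unless $K_\Omega f$ is constant, which would make $u$ analytic and pin down $f$, and a direct check rules that configuration out.

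The main obstacle I anticipate is handling the corners rigorously. At a corner $\zeta$ the measure $\mu_\zeta$ has an atom of mass $1 - \theta_\zeta/\pi$ at $\zeta$ itself, so $K_\Omega f(\zeta)$ is a genuine convex combination involving the boundary value $f(\zeta)$; the double-layer potential $u$ has only a Hölder-type (not Lipschitz) modulus of continuity near a corner, and one must be careful that $u - f$ still satisfies the maximum principle and that the compactness argument's limit function inherits the right boundary behaviour. I would deal with this by working with the explicit kernel \eqref{E:MuZetaEq}–\eqref{E:MuZetaDensityFormula} and the geometric radius formula \eqref{E:DensityCircleRadiusFormula}, which give uniform integrability of $\rho_\zeta$ and hence uniform continuity of the family $\{u_f : \|f\|\le 1\}$ on $\overline\Omega$, so that Arzelà–Ascoli applies on the whole of $\partial\Omega$ and not merely in $\Omega^o$. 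A secondary technical point is that $\A(\Omega)/\CC\mathbf 1$ is not a priori closed under the relevant limits in a way that keeps the normalization $\|f+\CC\mathbf 1\|\le 1$ tight; this is routine but must be stated. Once these continuity-up-to-the-boundary facts are in place, the strict inequality follows from the strong maximum principle applied to the real part of a suitably rotated copy of $K_\Omega f$, exactly as in the Crouzeix–Palencia style arguments, and the theorem is proved.
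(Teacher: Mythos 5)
Your high-level strategy --- argue by contradiction with an extremal sequence $(f_n)$ and exploit analyticity --- is the same as the paper's, but the mechanism you propose for deriving the contradiction has genuine gaps, and the paper's actual mechanism is quite different.

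The first problem is the equicontinuity claim you flagged yourself. The family $\{K_\Omega f : \|f\|_{\partial\Omega}\le 1\}$ is \emph{not} equicontinuous on $\partial\Omega$ in general (equivalently, $K_\Omega$ need not be compact on $C(\partial\Omega)$): at a corner $\zeta$ the operator has a point-evaluation term $(1-\theta_\zeta/\pi)f(\zeta)$, and the continuity of $K_\Omega f$ near $\zeta$ depends on the modulus of continuity of $f$ itself, which is not uniform over the unit ball. So Arzel\`a--Ascoli does not apply, and a locally uniformly convergent subsequence $f_n \to f$ in $\Omega^o$ carries essentially no information about the boundary traces $f_n|_{\partial\Omega}$, which are exactly what $K_\Omega f_n$ is built from (think $f_n(z)=z^n$: interior limit $0$, boundary values all unimodular). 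Your compactness scheme therefore loses the very object under study.

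The second problem is that even granting convergence, the maximum principle by itself gives no contradiction. Since $K_\Omega f$ is the conjugate of a function in $\A(\Omega)$, the maximum of $|K_\Omega f - \lambda|$ over $\overline\Omega$ \emph{is} attained on $\partial\Omega$ --- that is just the maximum principle, not a contradiction. There is no ``genuine interior maximum'' to produce. The two-constants / Harnack route in your first paragraph has the same issue: you would need a reason why the near-extremal $f_n$ must be forced into a degenerate configuration, and the maximum principle alone does not supply one.

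What the paper does instead is more quantitative. It exploits that the $\mu_\zeta$ are probability measures to get an $L^2$ estimate: from $\Re\int f_n\,d\mu_{\zeta_n}\to 1$ and $|f_n|\le 1$ one obtains $\int |f_n-1|^2\,d\mu_{\zeta_n}\to 0$, hence (after a localization lemma, Lemma~\ref{L:Thm2ProofLemma1}) $f_n\to 1$ a.e.\ with respect to $\rho_\zeta\,ds$ along a subsequence. A separate geometric ``strip'' argument then shows near-extremality forces a \emph{second} sequence of boundary points with $K_\Omega f_n(\zeta_n')\to\alpha$ for some unimodular $\alpha\ne 1$, hence $f_n\to\alpha$ a.e.\ with respect to $\rho_{\zeta'}\,ds$. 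Transporting to the disk by a conformal map $\phi$ and using that harmonic measure on a rectifiable boundary gives positive Lebesgue measure to $\phi^{-1}(E)$ and $\phi^{-1}(E')$, one extracts a weak-star limit $\tilde f\in H^\infty$ (not a locally uniform limit) that equals $1$ on a set of positive measure and $\alpha\ne 1$ on another --- impossible, since a nonzero $H^\infty$ function cannot vanish on a set of positive boundary measure. The crucial ingredients you are missing are the $L^2$ trick, the strip construction of the second boundary sequence, and the use of weak-star rather than locally uniform convergence to bypass the lack of equicontinuity.
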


Our proof of Theorem~\ref{T:Maintheorem2} is much different from the one given by Schober in his proof of the \textit{real} Neumann's lemma in \cite{Sch68}, but it works also in the real context. At the end of Section \ref{S:ProofT2} we show how our technique leads to a different proof of Neumann's lemma. 

\subsubsection{Functional calculus bounds}

The following result has already been mentioned above.

\begin{theorem} \label{T:Maintheorem3}
Let $T: \HH \to \HH$ be a bounded linear operator on a Hilbert space $\HH$ with numerical range $W(T)$ which has non-empty interior. Then, for every polynomial $p$, we have
\[ \| p(T)\| \leq \Bigl( 1 + \sqrt{1 + a(W)} \Bigr) \| p \|_{W(T)}. \]
\end{theorem}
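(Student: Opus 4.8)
The plan is to follow the Crouzeix--Palencia strategy, but to sharpen the final optimization step using the analytic configuration constant $a(W)$, as suggested by the Schwenninger--de Vries reformulation. Write $W = \overline{W(T)}$, fix a polynomial $p$, and let $u$ be the double-layer potential of $\overline{p|_{\partial W}}$ (or rather the relevant boundary function). The starting point is the Crouzeix--Palencia identity: for every $f \in \A(W)$, the boundary function $f + K_W \bar f$ is, up to a harmless constant, the boundary trace of a function holomorphic on $W^o$ and continuous on $W$; concretely, if $g := f + \overline{K_W f}$ then one shows $g \in \A(W)$ and $\|g\|_W \le (1 + a(W))\|f\|_W$ when $f$ has mean zero in the appropriate sense, using the definition \eqref{E:AconfigConstDef} of $a(W)$ to bound $\|K_W f\|_{\partial W}$ modulo constants. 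The key operator-theoretic input is the ``$T$ maps into $W$'' positivity lemma of Crouzeix--Palencia: the map $f \mapsto f(T)$ together with the adjoint relation $\langle (\overline{K_W f})(T)^* x, x\rangle$ behaves well because $W(T) \subseteq W$, which yields an inequality of the form $\|f(T)\| \le \|g\|_W + \|f\|_W$ for $g = f + \overline{K_W f}$, or more precisely the quadratic relation that underlies the $1+\sqrt 2$ bound.

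The main new step is the following. Rather than bounding $\|g\|_W \le 2\|f\|_W$ crudely (which gives Crouzeix--Palencia's $1+\sqrt 2$), I would feed in $\|g\|_W \le (1 + a(W))\|f\|_W$. Combined with the Crouzeix--Palencia functional-analytic inequality --- which, after the Schwenninger--de Vries packaging, takes the form: if $\phi \colon \A(W) \to B(\HH)$ is a unital homomorphism with $\|\phi(f) + \phi(\overline{K_W f})^*\| \le \|f + \overline{K_W f}\|_W$ for all $f$, then $\|\phi\| \le 1 + \sqrt{1 + \beta}$ where $\beta$ is the norm of $f \mapsto \overline{K_W f}$ on $\A(W)/\CC\mathbf 1$ into $C(\partial W)/\CC\mathbf 1$ --- one gets exactly $K \le 1 + \sqrt{1 + a(W)}$. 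So the proof is: (i) set up the homomorphism $\phi(f) = f(T)$ for $f$ a polynomial (or $f \in \A(W)$, extending by a density/approximation argument since $W$ is a nice convex domain with polynomials dense in $\A(W)$ by Mergelyan); (ii) verify the Crouzeix--Palencia ``half-norm'' inequality $\|\phi(f) + \phi(\overline{K_W f})^*\| \le \|f + \overline{K_W f}\|_W = \|g\|_W$ using the numerical range containment and the Cauchy-transform representation of $\overline{K_W f}$; (iii) invoke the abstract lemma of Schwenninger--de Vries with $\beta = a(W)$ to conclude.

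The step I expect to be the main obstacle is (ii), reproving the Crouzeix--Palencia positivity/half-norm estimate in the form needed. This requires: the identity expressing $\overline{K_W f}$ via the double-layer potential so that $\phi(\overline{K_W f})^* = (\overline{K_W f})(T)^*$ makes sense as a bounded operator and equals the ``anti-analytic part'' of the functional calculus along $\partial W$; and the estimate $\langle (\phi(f) + \phi(\overline{K_W f})^*) x, x \rangle$ is controlled by $\|g\|_W$, which comes from writing this inner product as an integral over $\partial W$ against a positive measure (this is where $W(T) \subseteq W$ enters, via the Cauchy-kernel being integrated against the positive-semidefinite quantity $\langle (\sigma - T)^{-1}x, x\rangle$-type terms). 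This is essentially Lemma 2.1 of \cite{CP17} together with the main inequality chain of \cite{CP17}, adapted so that the constant $2$ is replaced by $1 + a(W)$; the adaptation is not deep once one tracks where the crude bound $\|K_W f\|_{\partial W} \le \|f\|_{\partial W}$ (modulo constants) is used and replaces it with the definition of $a(W)$. A secondary technical point is the passage from polynomials to all of $\A(W)$ and back: since we only claim the bound for polynomials $p$ with $\|p\|_{W(T)}$ on the right (not $\|p\|_W$), and $W(T)$ may fail to be closed, I would first prove the bound with $\|p\|_W$ on the right for $W = \overline{W(T)}$, then note $\|p\|_W = \|p\|_{W(T)}$ since $W$ is the closure of $W(T)$ and $p$ is continuous, giving the stated form.
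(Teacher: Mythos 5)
Your overall framework has the right ingredients — Crouzeix--Palencia, the Schwenninger--de Vries extremal-pair formalism, and the analytic configuration constant — but the specific mechanism you propose does not work. The key inequality you claim, $\|f + \overline{K_W f}\|_W \le (1+a(W))\|f\|_W$, is false in general: taking $f = \mathbf{1}$ gives $K_W\mathbf{1} = \mathbf{1}$, so the left-hand side equals $2$, while the right-hand side is $1+a(W) < 2$ by Theorem~\ref{T:Maintheorem2}. The source of the error is that $a(W)$ controls only the quotient norm $\|K_W f + \CC\mathbf{1}\|_W$, not the sup-norm $\|K_W f\|_W$ itself; the extremizing constant cannot simply be dropped. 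Relatedly, the ``abstract lemma of Schwenninger--de Vries'' you invoke at step (iii) — a hypothesis of the form $\|\phi(f)+\phi(\overline{K_W f})^*\| \le \|f+\overline{K_W f}\|_W$ directly delivering $\|\phi\| \le 1+\sqrt{1+\beta}$ — is not available in that shape. What SdV/BGGRSW actually supply, and what the paper uses, is the extremal-pair orthogonality lemma (Lemma~\ref{L:extremal}): if $(f,x)$ is an extremal pair with $\|f(T)\| > 1$, then $\langle f(T)x,x\rangle = 0$.

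The paper keeps the Crouzeix--Palencia bound $\|f(T)+g(T)^*\| \le 2\|f\|_\Omega$, with $g := K_\Omega\bar f = \overline{K_\Omega f}$, entirely unchanged — so your step (ii), which you flagged as the main obstacle, is not actually modified. Instead, after reducing to finite-dimensional $\HH$ (so that extremal vectors exist) and using affine invariance of $a(\cdot)$ to put $\overline{W(T)}$ in the interior of $\Omega$, one takes an extremal pair $(f,x)$, approximates $f$ by $f_n \in \A(\Omega)$ with $\|f_n\|_\Omega \le 1$, sets $g_n := K_\Omega\overline{f}_n$, and expands
\[
\|f_n(T)x\|^2 = \langle f_n(T)x, (f_n(T)+g_n(T)^*)x\rangle - \langle f_n(T)x, (g_n+\lambda_n\mathbf{1})(T)^*x\rangle + \lambda_n\langle f_n(T)x,x\rangle,
\]
where $\lambda_n$ is chosen so that $\|g_n+\lambda_n\mathbf{1}\|_\Omega \le a(\Omega)$. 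The first term is $\le 2\|f_n(T)\|$ by CP; the second is bounded by $\|g_n+\lambda_n\mathbf{1}\|_\Omega \le a(\Omega)$ using Lemma~\ref{L:extremal}; and the third vanishes in the limit by the orthogonality $\langle f(T)x,x\rangle=0$. Letting $n\to\infty$ gives $\|f(T)\|^2 \le 2\|f(T)\| + a(\Omega)$, hence $\|f(T)\| \le 1+\sqrt{1+a(\Omega)}$. The improvement thus comes from inserting a free additive constant $\lambda_n$ into the quadratic expansion and eliminating its contribution via orthogonality — a step your proposal never performs, and which the (false) sharpened sup-norm inequality cannot replace.
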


Recall that if the numerical range of an operator has empty interior, then the operator is normal, and so \eqref{E:PolFuncCalcBound} holds with $K = 1$. From this observation and Theorem~\ref{T:Maintheorem2} we obtain that for any fixed operator $T:\HH \to \HH$, the optimal constant $K$ in \eqref{E:PolFuncCalcBound} is always strictly smaller than $1 + \sqrt{2}$. In fact, we deduce from our results that we have the inequality
\[
\|p(T)\| \leq K_{W}  \| p \|_W
\] with a constant 
\[K_W < 1 + \sqrt{2}\]
which depends only on the shape of $W = \overline{W(T)}$, and not on the operator $T$ itself. We show in Section~\ref{S:ExamplesSection} that no better universal bound can be obtained by means of the analytic configuration constant: for any $\epsilon > 0$ there exists a “thin" quadrilateral $\Omega_\epsilon$ for which we have $a(\Omega_\epsilon) > 1 - \epsilon$. However, fixing the dimension of the Hilbert space $\HH$, one may combine earlier results of Crouzeix to obtain a uniform improvement. The optimal constant $K$ in \eqref{E:PolFuncCalcBound} varies with $T$, and we may consider the supremum of these quantities among all operators $T$ on a Hilbert space $\HH$ of a fixed dimension $N$. In \cite[Theorem 2.2]{Cr16}, Crouzeix proved that there exists an operator realizing this supremum. An immediate corollary of his result, Theorem~\ref{T:Maintheorem2} and Theorem~\ref{T:Maintheorem3} is the following.

\begin{corollary}
For every positive integer $N$, there exists a constant $C_N < 1 + \sqrt{2}$ for which we have 
\[ \| p(T)\| \leq C_N \|p\|_{W(T)}\] whenever $T$ is an operator on an $N$-dimensional Hilbert space, and $p$ is a polynomial.
\end{corollary}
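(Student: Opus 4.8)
The plan is to read the corollary off from Theorems~\ref{T:Maintheorem2} and~\ref{T:Maintheorem3}, using Crouzeix's compactness result \cite[Theorem 2.2]{Cr16} to upgrade a non-strict bound to a strict one. Fix $N$, and for an operator $T$ on an $N$-dimensional Hilbert space let $K(T)$ denote the optimal constant in the inequality $\|p(T)\| \le K \|p\|_{W(T)}$ (valid for all polynomials $p$); set $C_N := \sup_T K(T)$, the supremum taken over all such $T$. The essential point of \cite[Theorem 2.2]{Cr16} is that this supremum is \emph{attained}: there is an operator $T_N$ on an $N$-dimensional space with $K(T_N) = C_N$. This attainment is exactly what makes the argument go through, since Theorems~\ref{T:Maintheorem2} and~\ref{T:Maintheorem3} yield only a strict bound for each \emph{individual} operator, and a supremum of quantities each lying strictly below $1+\sqrt 2$ could a priori equal $1+\sqrt 2$.

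With $T_N$ in hand, I would distinguish two cases. In finite dimensions the numerical range is automatically compact (the continuous image of the unit sphere) and convex (Toeplitz--Hausdorff), so if $W(T_N)$ has non-empty interior it is a domain of the type considered throughout this paper, and we may put $W := W(T_N)$. Theorem~\ref{T:Maintheorem3} then gives $\|p(T_N)\| \le \bigl(1 + \sqrt{1 + a(W)}\bigr)\|p\|_{W}$ for every polynomial $p$, hence $K(T_N) \le 1 + \sqrt{1 + a(W)}$, while Theorem~\ref{T:Maintheorem2} gives $a(W) < 1$; combining, $C_N = K(T_N) \le 1 + \sqrt{1 + a(W)} < 1 + \sqrt 2$, and we take $C_N$ itself as the desired constant. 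If instead $W(T_N)$ has empty interior, then $T_N$ is normal, the spectral theorem gives $K(T_N) = 1$, and $C_N = 1 < 1 + \sqrt 2$ trivially.

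I do not anticipate a real obstacle: once the three ingredients are available, the statement is genuinely a corollary. The only point needing a line of care is that Theorem~\ref{T:Maintheorem3} presupposes that the numerical range has non-empty interior, which is precisely what forces the brief case split above, the degenerate case being disposed of by the spectral theorem. (If one prefers, one can bypass even this: for $N \ge 2$ a $2 \times 2$ nilpotent Jordan block already forces $C_N \ge 2 > 1$, so the extremal $T_N$ cannot be normal and its numerical range has non-empty interior; for $N = 1$ every operator is normal and $C_1 = 1$.)
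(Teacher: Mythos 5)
Your proposal is correct and reconstructs exactly the argument the paper has in mind; the paper itself presents this as an ``immediate corollary'' of Crouzeix's attainment result \cite[Theorem 2.2]{Cr16}, Theorem~\ref{T:Maintheorem2}, and Theorem~\ref{T:Maintheorem3}, and you correctly identify that attainment is the step that upgrades the pointwise strict inequality to a uniform one, and that the empty-interior case is handled by normality and the spectral theorem (a point the paper makes in the paragraph preceding the corollary). The additional observation that the extremal $T_N$ cannot be normal for $N \ge 2$ is a nice, if optional, simplification of the case split.
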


This improves the Crouzeix-Palencia bound, although by an indefinite amount. 

\subsubsection{Estimates for the configuration constants}
In Section~\ref{S:ExamplesSection} we present also other computations and estimates for the configuration constants. Surprisingly, in the case of an elliptical domain, the configuration constant is computable exactly, and we obtain
\[ c(\Omega_{a,b}) = \frac{2}{\pi}\arctan\Bigl(\frac{1}{2}\Bigl|\frac{b}{a}-\frac{a}{b}\Bigr|\Bigr) =  \frac{2}{\pi}\arctan\Bigl( \frac{1}{2} \frac{e^2}{\sqrt{1-e^2}}\Bigr)\] where $a$ and $b$ are lengths of the semi-axes of the ellipse $\Omega_{a,b}$, and $e$ is the eccentricity of the ellipse, given by $e := \sqrt{1 - b^2/a^2}$ in case that $a \geq b$. This fact, together with Theorem~\ref{T:MainTheorem1}, estimate \eqref{E:PolFuncCalcBoundAOmega}, and the inequality $a(\Omega) \leq c(\Omega)$, has the following consequence.

\begin{corollary}\label{C:EllipseSpecConstEstimate} Let $T: \HH \to \HH$ be a bounded linear operator on a Hilbert space $\HH$ with numerical range contained in (or equal to) the ellipse $\Omega_{a,b}$. Then, for every polynomial $p$, we have 
\[
\|p(T)\| \leq K(a,b) \| p \|_{\Omega_{a,b}}.
\] where
\[ K(a,b) := 1 + \sqrt{1 + \frac{2}{\pi}\arctan\Bigl(\frac{1}{2}\Bigl|\frac{b}{a}-\frac{a}{b}\Bigr|\Bigr)}.\]
\end{corollary}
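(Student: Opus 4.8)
The plan is to read the corollary off from the functional-calculus bound of Section~\ref{S:ApplNumRan} together with the exact value of the configuration constant of an ellipse. First I would dispose of the degenerate case: if $W(T)$ has empty interior then $T$ is normal, so by the spectral theorem $\|p(T)\| = \|p\|_{\sigma(T)} \le \|p\|_{\Omega_{a,b}} \le K(a,b)\|p\|_{\Omega_{a,b}}$ (using $\sigma(T) \subseteq \overline{W(T)} \subseteq \Omega_{a,b}$ and $K(a,b) \ge 2 > 1$), and we are done. So assume $W(T)$ has non-empty interior; since $\overline{W(T)} \subseteq \Omega_{a,b}$ we have $\|p\|_{W(T)} \le \|p\|_{\Omega_{a,b}}$, so it suffices to prove $\|p(T)\| \le K(a,b)\|p\|_{\Omega_{a,b}}$.

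Next I would invoke the functional-calculus estimate in the slightly more general form that the proof in Section~\ref{S:ApplNumRan} delivers: if $\Omega$ is any compact convex domain with non-empty interior satisfying $\overline{W(T)} \subseteq \Omega$, then
\[
\|p(T)\| \le \Bigl(1 + \sqrt{1 + a(\Omega)}\Bigr)\|p\|_\Omega .
\]
Estimate~\eqref{E:PolFuncCalcBoundAOmega} and Theorem~\ref{T:Maintheorem3} are the case $\Omega = \overline{W(T)}$; the Crouzeix--Palencia and Schwenninger--de Vries arguments use the numerical range only through the inclusion $\overline{W(T)} \subseteq \Omega$, after which everything is expressed via the double-layer potential on $\partial\Omega$ and the constant $a(\Omega)$. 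Applying this with $\Omega = \Omega_{a,b}$ gives $\|p(T)\| \le \bigl(1 + \sqrt{1 + a(\Omega_{a,b})}\bigr)\|p\|_{\Omega_{a,b}}$.

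It then remains to bound $a(\Omega_{a,b})$. By Theorem~\ref{T:MainTheorem1} we have $a(\Omega_{a,b}) \le c_\CC(\Omega_{a,b}) = c(\Omega_{a,b})$, and inserting the exact formula for the configuration constant of an ellipse recorded in Section~\ref{S:ExamplesSection} yields
\[
a(\Omega_{a,b}) \le \frac{2}{\pi}\arctan\Bigl(\frac12\Bigl|\frac ba - \frac ab\Bigr|\Bigr).
\]
Since $t\mapsto 1 + \sqrt{1+t}$ is increasing on $[0,\infty)$, substituting this into the estimate of the previous paragraph gives exactly $\|p(T)\| \le K(a,b)\|p\|_{\Omega_{a,b}}$.

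The one point that needs genuine care is the passage, in the second paragraph, from $\overline{W(T)}$ to the ambient ellipse $\Omega_{a,b}$: configuration constants are not monotone with respect to inclusion (a thin triangle contained in a near-circular ellipse has configuration constant $1$ while the ellipse's is close to $0$), so one cannot compare $a(\overline{W(T)})$ and $a(\Omega_{a,b})$ directly, and one must instead rely on the fact that the Crouzeix--Palencia--Schwenninger--de Vries scheme produces, for each convex $\Omega\supseteq\overline{W(T)}$, the statement that $\Omega$ is a $\bigl(1+\sqrt{1+a(\Omega)}\bigr)$-spectral set for $T$. Everything else is a direct substitution.
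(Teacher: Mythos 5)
Your derivation is correct and follows the same route the paper sketches: insert the exact value of $c(\Omega_{a,b})$ from Proposition~\ref{P:EllipseConst}, use Theorem~\ref{T:MainTheorem1} and $a \le c$ to bound $a(\Omega_{a,b})$, and feed this into the Crouzeix--Palencia--Schwenninger--de Vries estimate. Your added caveat is, however, a genuine and worthwhile clarification: the paper cites estimate~\eqref{E:PolFuncCalcBoundAOmega}, which is stated only for $\Omega = \overline{W(T)}$, and since $a(\cdot)$ is not monotone under inclusion this on its own does not yield the corollary when $\overline{W(T)} \subsetneq \Omega_{a,b}$. As you correctly observe, what one actually needs is the intermediate statement proved inside Theorem~\ref{T:FunctCalcAWestimate}, namely that for every compact convex $\Omega$ with $\overline{W(T)}\subseteq\Omega$ one has $\|p(T)\| \le (1+\sqrt{1+a(\Omega)})\|p\|_\Omega$; the passage from $\overline{W(T)}\subset\Omega^o$ to $\overline{W(T)}\subseteq\Omega$ is then the same dilation-plus-affine-invariance argument appearing in Lemma~\ref{L:AWEstimateLemma2}, applied to $\Omega_{a,b}$ (use $\Omega_{ra,rb}$, $r\downarrow 1$). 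So you are not taking a different route, but you are being more careful about a step the paper glosses over, and that care is justified.
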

Note that the function $a \mapsto K(a,1)$ is continuous and increasing for $a \geq 1$, and we have
\[ \lim_{a \to \infty} K(a,1) = 1 + \sqrt{2}, \quad \lim_{a \to 1} K(a,1) = 2.\]
Hence the estimate in Corollary~\ref{C:EllipseSpecConstEstimate} gets worse as the eccentricity of the ellipse $\Omega_{a,b}$ grows, and approaches the Crouzeix-Palencia bound in the limit $a \to \infty$. On the other hand, as $a \to 1$, the eccentricity of the ellipse $\Omega_{a,1}$ tends to $0$. The estimate is then close to the conjectured optimal bound $K = 2$ and coincides with the Okubo-Ando bound for $a=1$, in which case the domain is a disk. From this perspective, Corollary~\ref{C:EllipseSpecConstEstimate} may be interpreted as an elliptical generalization of the Okubo-Ando estimate.

For many other types of domains, the exact value of $c(\Omega)$ is inaccessible. To help the situation, we establish an integral estimate which gives an upper bound on $c(\Omega)$ in terms of the curvature of $\partial \Omega$, roughly speaking. For a fixed $\sigma$ which is not a corner of $\partial \Omega$, recall the definition of $R_{\zeta, \sigma}$ in \eqref{E:DensityCircleRadiusFormula}, and consider
\begin{equation}
\label{E:RSigmaDefIntro}
R_\Omega(\sigma) := \sup_{\zeta \in \partial \Omega} R_{\zeta, \sigma}.
\end{equation}
If $\kappa(\sigma)$ is the curvature of $\partial \Omega$ at $\sigma$, then $R_\Omega(\sigma)$ is at least as large as the \textit{radius of curvature} \begin{equation}
\label{E:RCurvDefIntro}1/\kappa(\sigma) = \lim_{\zeta \to \sigma} R_{\zeta, \sigma},
\end{equation} which is also the radius of the osculating circle at $\sigma$. Geometrically, $R_\Omega(\sigma)$ is the radius of the smallest disk tangent to $\partial \Omega$ at $\sigma$ which contains $\Omega$, if such a disk exists, and it is equal to $\infty$ otherwise. The latter case occurs, for instance, if $\sigma$ lies on a straight line segment contained in $\partial \Omega$. However, if $\partial \Omega$ is sufficiently curved on a segment of $\partial \Omega$, then $R_\Omega$ will be bounded above there. We obtain in such a situation a non-trivial upper bound on $c(\Omega)$.

\begin{theorem} \label{T:ROmegaTheorem}
With the above notation, we have the estimate
\[ c(\Omega) \leq 1-\frac{1}{2\pi}\int_{\partial\Omega}\frac{ds}{R_\Omega}.\]
\end{theorem}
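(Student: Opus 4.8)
The starting point is the formula $c(\Omega) = \sup_{\zeta,\zeta'}\tfrac12\|\mu_\zeta - \mu_{\zeta'}\|$ from \eqref{E:RconfigConstantFormula} combined with Theorem~\ref{T:MainTheorem1}. Fix two boundary points $\zeta,\zeta'$; I would estimate $\tfrac12\|\mu_\zeta-\mu_{\zeta'}\|$ from above. Recall the Jordan decomposition of a difference of probability measures: $\tfrac12\|\mu_\zeta-\mu_{\zeta'}\| = 1 - \|\mu_\zeta \wedge \mu_{\zeta'}\|$, where $\mu_\zeta\wedge\mu_{\zeta'}$ is the measure-theoretic minimum (the ``overlap''). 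So it suffices to produce a lower bound $\|\mu_\zeta\wedge\mu_{\zeta'}\| \geq \tfrac{1}{2\pi}\int_{\partial\Omega}\frac{ds}{R_\Omega}$ that is uniform in $\zeta,\zeta'$. Since the atomic parts of $\mu_\zeta,\mu_{\zeta'}$ can only help (they contribute nonnegatively to the overlap, and in any case the corners form a countable set of arclength zero so they do not affect the integral), I would discard them and work with the absolutely continuous parts $\rho_\zeta\, ds$ and $\rho_{\zeta'}\,ds$, so that $\|\mu_\zeta\wedge\mu_{\zeta'}\| \geq \int_{\partial\Omega}\min(\rho_\zeta(\sigma),\rho_{\zeta'}(\sigma))\,ds$.

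The key pointwise estimate is then $\min(\rho_\zeta(\sigma),\rho_{\zeta'}(\sigma)) \geq \tfrac{1}{2\pi R_\Omega(\sigma)}$ for every non-corner $\sigma$. This is where the geometric interpretation \eqref{E:DensityCircleRadiusFormula} does all the work: $\rho_\zeta(\sigma) = \tfrac{1}{2\pi R_{\zeta,\sigma}}$, where $R_{\zeta,\sigma}$ is the radius of the circle through $\zeta$ tangent to $\partial\Omega$ at $\sigma$. By definition $R_\Omega(\sigma) = \sup_{\eta\in\partial\Omega} R_{\eta,\sigma} \geq \max(R_{\zeta,\sigma},R_{\zeta',\sigma})$, hence $\tfrac{1}{2\pi R_{\zeta,\sigma}} \geq \tfrac{1}{2\pi R_\Omega(\sigma)}$ and likewise for $\zeta'$, giving the claimed bound on the minimum. (If $\sigma$ lies on a line segment in $\partial\Omega$ then $R_\Omega(\sigma)=\infty$ and the inequality is trivial; the circle-through-$\zeta$ may also degenerate, but then $\rho_\zeta(\sigma)=0$ only when $\zeta$ is on the tangent line at $\sigma$, and one checks $R_\Omega(\sigma)=\infty$ in that degenerate direction anyway, or simply that the displayed inequality $0 \geq \tfrac{1}{2\pi\cdot\infty}$ holds — care is needed here but it is not serious.) Integrating over $\partial\Omega$ yields $\|\mu_\zeta\wedge\mu_{\zeta'}\| \geq \tfrac{1}{2\pi}\int_{\partial\Omega}\frac{ds}{R_\Omega}$, and taking the supremum over $\zeta,\zeta'$ of $\tfrac12\|\mu_\zeta-\mu_{\zeta'}\| = 1 - \|\mu_\zeta\wedge\mu_{\zeta'}\|$ gives $c(\Omega) \leq 1 - \tfrac{1}{2\pi}\int_{\partial\Omega}\frac{ds}{R_\Omega}$.

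The main obstacle I anticipate is not the chain of inequalities, which is short, but rather the careful bookkeeping around degeneracies and measurability: justifying that $\sigma\mapsto R_\Omega(\sigma)$ is measurable so the integral makes sense, handling the points where $\rho_\zeta$ or $\rho_{\zeta'}$ vanishes, and confirming that dropping the singular (atomic) parts genuinely only decreases the overlap — i.e.\ that $\|\mu_\zeta\wedge\mu_{\zeta'}\| \geq \int\min(\rho_\zeta,\rho_{\zeta'})\,ds$ with the atoms ignored. One should also double-check that the bound $\tfrac12\|\mu_\zeta-\mu_{\zeta'}\| = 1-\|\mu_\zeta\wedge\mu_{\zeta'}\|$ is applied correctly given that each $\mu_\zeta$ has total mass exactly $1$. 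None of these is deep; they are the routine technical glue. A secondary point worth a remark is that when $\int_{\partial\Omega}\frac{ds}{R_\Omega} = 0$ (e.g.\ a polygon, where $R_\Omega\equiv\infty$ a.e.) the estimate reads $c(\Omega)\leq 1$ and is vacuous, consistent with Neumann's lemma failing for triangles and quadrilaterals.
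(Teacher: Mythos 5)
Your proof is correct. The chain you outline is sound: the identity $\tfrac12\|\mu_\zeta-\mu_{\zeta'}\| = 1-\|\mu_\zeta\wedge\mu_{\zeta'}\|$ for probability measures is valid (the Jordan decomposition of $\mu_\zeta-\mu_{\zeta'}$ splits off the mutually singular positive and negative parts), dropping the atomic parts of $\mu_\zeta$ and $\mu_{\zeta'}$ (which for $\zeta\neq\zeta'$ are mutually singular and hence contribute nothing to the overlap anyway) gives $\|\mu_\zeta\wedge\mu_{\zeta'}\|\geq\int_{\partial\Omega}\min(\rho_\zeta,\rho_{\zeta'})\,ds$, and your pointwise estimate $\min(\rho_\zeta,\rho_{\zeta'})\geq\tfrac{1}{2\pi R_\Omega}$ handles the degenerate case correctly, since whenever $\rho_\zeta(\sigma)=0$ the radius $R_{\zeta,\sigma}$ is infinite and hence $R_\Omega(\sigma)=\infty$, so both sides vanish. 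The measurability of $R_\Omega$ you flag as a concern is handled in the paper by proving lower semicontinuity, which you would need to do as well.

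The route, however, differs from the paper's in a way worth noting. The paper introduces a single \emph{reference measure} $d\nu=\tfrac{ds}{2\pi R_\Omega}$, observes that $\mu_\zeta\geq\nu$ for all $\zeta$, and then bounds the image of $K_\Omega\phi$ for $\|\phi\|_{\partial\Omega}\leq 1$ directly: every value $K_\Omega\phi(\zeta)$ lies within distance $\|\mu_\zeta-\nu\|=1-\nu(\partial\Omega)$ of the fixed point $\int\phi\,d\nu$, so the whole image sits inside a disk of that radius, giving $c_\CC(\Omega)\leq 1-\nu(\partial\Omega)$ without any appeal to Theorem~\ref{T:MainTheorem1}. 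Your version instead routes through Theorem~\ref{T:MainTheorem1} to replace $c_\CC$ by the pairwise formula $\sup_{\zeta,\zeta'}\tfrac12\|\mu_\zeta-\mu_{\zeta'}\|$, and then applies the overlap identity pair by pair. Both proofs rest on the same geometric kernel $\rho_\zeta(\sigma)\geq\tfrac{1}{2\pi R_\Omega(\sigma)}$, and both arrive at the same constant, but the paper's argument is self-contained and logically lighter (it does not invoke the three-measures theorem), while yours is perhaps more transparent about \emph{why} the bound is what it is — the quantity $\nu(\partial\Omega)$ is a uniform lower bound on the overlap of any two kernels. If you present your version, it would be good style to note explicitly that the reliance on Theorem~\ref{T:MainTheorem1} is avoidable by the reference-measure trick.
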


The result implies spectral constant estimates similar to the one in Corollary~\ref{C:EllipseSpecConstEstimate} above. It also generalizes some similar results in the literature. See Section~\ref{S:ExamplesSection} for further details and examples.

\subsubsection{An unresolved matter}

We have mentioned above that $c(\Omega) = 0$ if and only if $\Omega$ is a disk. With some additional effort, we will show in Section~\ref{S:ExamplesSection} that the condition $a(\Omega) = 0$ also characterizes disks. In this case, we have the equality $a(\Omega) = c(\Omega)$. It is natural to ask whether other domains exist for which the equality occurs, or if the case of the disk is exceptional.

\begin{question*} Do we always have the strict inequality
\[ a(\Omega) < c(\Omega)\] whenever $\Omega$ is not a disk?
\end{question*}

As a consequence of Theorem~\ref{T:Maintheorem2} and the exceptional cases of Neumann's lemma, we see that the strict inequality holds whenever $\Omega$ is a triangle or a quadrilateral. The authors have not been able to confirm that the inequality holds in any other examples.

\subsection{Notations} Some of our notation has already been introduced above. For a continuous function $f$ defined on a set $X$, we denote by $\| f \|_X$ the supremum of $|f|$ over $X$. For cosets of the form $f + \CC\mathbf{1}$ we use the convention \[\| f + \CC \mathbf{1}\|_X := \inf_{\lambda \in \CC} \|f + \lambda \|_X,\] with similar convention for real-valued $f$ and cosets $f + \RR\mathbf{1}$. A norm $\| \cdot \|$ without a subscript usually denotes a linear functional norm or a total variation norm of a measure. The distinction will be unimportant and should anyway be easy to deduce from context. We use boldface letters, such as $\textbf{x}$, to denote vectors in $\RR^n$, and plain letters, such as $x_j$, to denote the coordinates.

\section{The three-measures theorem}
\label{S:3MSection}

\subsection{Definitions of relevant spaces and operators} \label{S:Section2Definitions}

Theorem~\ref{T:MainTheorem1} will be proved as a corollary of our analysis of three-point configurations 
\[ \big( \ell_1(x), \ell_2(x), \ell_3(x) \big) \in \CC^3, \] where $x$ is an element of a given normed space $\NN$, and $\ell_1, \ell_2, \ell_3 \in \NN^*$ are three bounded linear functionals on $\NN$. A point configuration of this type has to satisfy certain conditions. For instance, we must have the distance bound 
\[
|\ell_j(x) - \ell_k(x)| \leq \|\ell_j - \ell_k\|_{\NN^*} \|x\|_{\NN}, \quad 1 \leq j,k \leq 3.
\]
Our principal interest will be in estimating the radius of the smallest disk which contains such a three-point set. 

In order to use the tools of functional analysis, we will formulate our problem as one of estimating the norm of an operator between normed spaces. To this end, we use the space $\CC^3$ of triples of complex numbers, and we equip it with the following norm:

\begin{equation}
\label{E:C3supnorm}
\| (a,b,c) \|_\infty := \max \{|a|, |b|, |c| \}.
\end{equation}
Similarly to our previous notational conventions, we shall set $\mathbf{1} := (1,1,1) \in \CC^3$. The quotient norm in the quotient space $\CC^3 / \CC \mathbf{1}$ satisfies 
\[ \| (a,b,c) + \CC \mathbf{1} \|_{\infty} := \min_{\lambda \in \CC} \, \max \{ |a-\lambda|, |b-\lambda|, |c-\lambda|\},
\] and it has the geometric interpretation adequate to our problem: it is the radius of the smallest disk containing the three point set $\{a,b,c\}$. Given a normed space $\NN$ and three linear functionals $\ell_1, \ell_2, \ell_3 \in \NN^*$, we introduce the linear operator $\LL : \NN \to \CC^3 / \CC \mathbf{1}$ defined by
\begin{equation}
\label{E:LOperatorDef} \LL x := \big( \ell_1(x), \ell_2(x), \ell_3(x) \big) + \CC \mathbf{1}.
\end{equation}
With these conventions, each three-point configuration $\big( \ell_1(x), \ell_2(x), \ell_3(x) \big)$ is contained in a disk of radius at most $\|\LL\|_{\NN \to \CC^3 / \CC \mathbf{1}} \cdot \|x\|_\NN$. We want to estimate the operator norm $\|\LL\|_{\NN \to \CC^3 / \CC \mathbf{1}}$. 

\subsection{Statement of the theorem}

Without any information regarding the space $\NN$ or the functionals $\ell_1, \ell_2, \ell_3$, the optimal estimate is 
\begin{equation}
\label{E:3MComplexBestEst}
\|\LL\|_{\NN \to \CC^3 / \CC \mathbf{1}} \leq \frac{1}{\sqrt{3}} \max_{j,k} \|\ell_j - \ell_k\|.
\end{equation}
Indeed, we see that we cannot do better by choosing $\NN = \CC$, $x = 1$, and the functionals (scalars) to be the vertices of an equilateral triangle inscribed in the unit circle. For instance, 
\[ \ell_1 = 1, \quad \ell_2 = -1/2 + i\sqrt{3}/2,  \quad \ell_3 = -1/2 - i\sqrt{3}/2 .\] The sides of the triangle have the common length equal to $| \ell_i - \ell_j| = \sqrt{3}$, and the smallest disk containing the three points $\ell_i(x) = \ell_i$ is the unit disk itself. Thus, in this case, \eqref{E:3MComplexBestEst} holds with equality. The estimate holds in general as a consequence of \textit{Jung's theorem}, which appeared first in \cite{Ju1910}, and which in the context of the plane says that any set of diameter $d$ is contained in a disk of radius $d/\sqrt{3}$. In our setting $d \leq \max_{j,k} \|\ell_j - \ell_k\|_{\NN^*}$, and so the estimate \eqref{E:3MComplexBestEst} follows from Jung's theorem.

In our intended application, the role of the space $\NN$ is played by $C(X)$, the Banach space of continuous functions on a compact Hausdorff space $X$, and the functionals are given by integration against \textit{real-valued} measures
\[ f \mapsto \mu_j(f) := \int_X f \, d \mu_j.\] It turns out that the three-point configurations which arise in this way are contained in disks of radius smaller than predicted by Jung's theorem. The main result of the section is the following.

\begin{theorem} \label{T:3MT}
Let $C(X)$ be the space of continuous functions on a compact Hausdorff space $X$, and $\LL: C(X) \to \CC^3 / \CC \mathbf{1}$ be the operator in \eqref{E:LOperatorDef} defined by three functionals induced by three finite real-valued Borel measures $\mu_1, \mu_2, \mu_3$. Then 
\begin{equation}
\label{E:3MTEstimate}
\|\LL\|_{C(X) \to \CC^3 / \CC \mathbf{1}} = \frac{1}{2} \max_{j,k} \|\mu_j - \mu_k\|.
\end{equation}
\end{theorem}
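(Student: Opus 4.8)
The plan is to prove the two inequalities separately. The easy direction is $\|\LL\| \geq \frac12 \max_{j,k}\|\mu_j - \mu_k\|$: fix the pair $j,k$ achieving the maximum, say $j=1, k=2$, and recall that $\|\mu_1 - \mu_2\| = \sup_{\|f\|_X \leq 1, f \text{ real}} |\mu_1(f) - \mu_2(f)|$, with the supremum approached by real-valued $f$ (for a real measure, the total variation norm is computed against real test functions, and by density/regularity against continuous ones). For such an $f$, the three-point set $\{\mu_1(f), \mu_2(f), \mu_3(f)\}$ lies on the real line, so the radius of its smallest enclosing disk is half the length of the shortest interval containing it, which is at least $\frac12|\mu_1(f) - \mu_2(f)|$. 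Taking the supremum over $f$ gives the lower bound. This step is routine.

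The substance is the upper bound $\|\LL\| \leq \frac12 \max_{j,k}\|\mu_j - \mu_k\|$. Here I would argue that for every $f$ with $\|f\|_X \leq 1$, the three complex numbers $w_j := \mu_j(f)$ lie in a disk of radius $\leq \frac12\max_{j,k}\|\mu_j-\mu_k\|$. The naive bound via pairwise distances only gives Jung's constant $1/\sqrt 3$, which is worse, so the point is to exploit that the $\mu_j$ are \emph{real} measures. The key idea: decompose each $\mu_j$ relative to the others. Write $\nu_{jk}$ for the signed measure $\mu_j - \mu_k$ and use a Hahn/Jordan-type decomposition. A cleaner route is to introduce the common "infimum" measure: since $\mu_1,\mu_2,\mu_3$ are real (signed) measures, there is a measure $m$ (e.g.\ built from $\min$ of their Radon–Nikodym densities against $|\mu_1|+|\mu_2|+|\mu_3|$) such that each $\mu_j = m + \lambda_j$ with $\lambda_j \geq 0$, and moreover for each fixed pair the overlap is maximal, so that $\|\lambda_j\| + \|\lambda_k\| = \|\mu_j - \mu_k\| + (\text{something})$ — one needs the precise bookkeeping here. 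Then $w_j = m(f) + \lambda_j(f)$, so the three points $w_j$ are translates (by the common $m(f)$) of $\lambda_j(f)$, and $|\lambda_j(f)| \leq \|\lambda_j\|$. Thus $\{w_j\}$ sits in a disk centered at $m(f)$ of radius $\max_j \|\lambda_j\|$, and it remains to check $\max_j \|\lambda_j\| \leq \frac12\max_{j,k}\|\mu_j - \mu_k\|$. This last inequality is a purely combinatorial statement about three nonnegative numbers $\|\lambda_1\|,\|\lambda_2\|,\|\lambda_3\|$ once one knows the pairwise identities $\|\lambda_j - \lambda_k\|$ relate to $\|\mu_j - \mu_k\|$ — and since each $\lambda_j \geq 0$, we have $\|\lambda_j - \lambda_k\| \leq \max(\|\lambda_j\|,\|\lambda_k\|) \leq \|\lambda_j\| + \|\lambda_k\|$, with the structure of the common refinement forcing the right side to be controlled. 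The delicate part is choosing $m$ so that simultaneously, for every pair $(j,k)$, the masses are split correctly; one likely needs a pointwise (a.e.) argument on densities: at each point, order the three density values and let $m$'s density be the minimum, so $\lambda_j$'s density is $d_j - \min$; then $\|\lambda_j - \lambda_k\|$ integrates $|d_j - d_k|$ which is exactly $\|\mu_j - \mu_k\|$, and the pointwise inequality $\max_j(d_j - \min_i d_i) \leq \frac12\big(|d_j-d_k| + |d_k - d_\ell| + \ldots\big)$ — more precisely $\max_j(d_j - \min) = \max - \min \leq \max_{j,k}|d_j - d_k|$ pointwise, but we need the integrated form with a $\tfrac12$, which is where the $w_j$ being planar (two real coordinates) rather than collinear must re-enter.

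I expect the main obstacle to be exactly this: the pointwise bound $\max_j\|\lambda_j\| \le \tfrac12 \max_{j,k}\|\mu_j-\mu_k\|$ is \emph{false} in general (take $\mu_j$ collinear point masses at distinct points), so the decomposition must be done more cleverly — presumably one does not use a single common $m$ but instead, for the worst point $w_j$, projects the configuration onto a line and uses that along any line the three real numbers $\langle w_i, \theta\rangle$ come from the real measures $\langle \text{something}\rangle$, reducing to the one-dimensional total-variation computation. Concretely: the smallest enclosing disk of $\{w_1,w_2,w_3\}$ has radius $\rho$, and either it is determined by two of the points (diameter case) — then $\rho = \frac12|w_j - w_k| \leq \frac12\|\mu_j-\mu_k\|$ and we are done — or by all three, in which case the center $c$ lies inside the triangle and one can find a unit vector $\theta$ with $\Re(\bar\theta(w_j - c)) \geq \rho/2$ for \emph{two} indices of one sign and the third of the opposite sign isn't forced... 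I would instead use: when all three points are on the circle of radius $\rho$ and not contained in a half-disk, there exist two of them, say $w_1, w_2$, and a point $p$ on segment $[w_1,w_2]$ with $|p - w_3| = $ the distance from $w_3$ to that chord $\geq \rho$ (since the chord, together with $w_3$, not in a half-circle, means $w_3$ is "beyond the center"), hence $\rho \leq |w_1 - w_3|$ or $\rho \leq |w_2 - w_3|$ after a short chase — reducing again to a pairwise distance. Making this geometric dichotomy precise and matching it to the total-variation norms of the real measures is the crux; everything else is bookkeeping. I would carry out: (1) the geometric lemma on smallest enclosing disks of three points reducing $\rho$ to a pairwise distance when the measures are real (using that collinear images force the interval interpretation); (2) the pairwise bound; (3) assembling the two inequalities. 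The reduction in step (1) exploiting realness — rather than any clever measure decomposition — is what I'd bet the authors do, and it is the hard step.
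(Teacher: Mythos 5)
Your lower-bound direction is fine, but the upper bound has a genuine gap, and your guess at the authors' strategy is wrong.

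The gap is a factor of $2$ in the three-point case of your geometric dichotomy. When the smallest enclosing disk is a diameter disk on $\{w_j, w_k\}$, you correctly get $\rho = \tfrac12|w_j - w_k| \le \tfrac12\|\mu_j - \mu_k\|$, which is exactly the form needed. But when all three points lie on the circle and are not contained in a half-disk, your argument only shows that $w_3$ is at distance $\ge\rho$ from the chord $[w_1,w_2]$, hence $\rho \le |w_1 - w_3|$ (and also $\rho \le |w_2 - w_3|$). Chasing this through gives $\rho \le |w_j - w_k| \le \|\mu_j - \mu_k\| \le \max_{j,k}\|\mu_j - \mu_k\|$, which is twice the bound the theorem claims. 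The equilateral-triangle configuration shows this loss is real for a purely geometric argument: there all pairwise distances equal $\sqrt{3}\,\rho$, so Jung's constant $1/\sqrt{3}$ is the best one can extract from pairwise distances alone. The sharp constant $\tfrac12$ has to come from the realness of the three measures acting jointly, and your reduction never makes this joint structure bite; it collapses too early to a single pairwise distance. Your earlier sketch via a common infimum measure $m$ runs into a similar wall, as you yourself note: the pointwise bound $\max_j(d_j - \min_i d_i)\le\tfrac12\max_{j,k}|d_j-d_k|$ is false, and integrating does not rescue it.

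As for what the paper actually does, your bet is off. The authors do not refine the smallest-enclosing-disk geometry; instead they pass to the dual. The dual of $\CC^3/\CC\mathbf{1}$ under the sup norm is the hyperplane $\{\alpha+\beta+\gamma=0\}$ in $\ell^1_3$, and the adjoint sends $(\alpha,\beta,\gamma)$ to $\alpha\mu_1+\beta\mu_2+\gamma\mu_3$. Eliminating $\gamma$ reduces the claim to the two-measure inequality $\|\alpha\nu_1+\beta\nu_2\|\le\tfrac12(|\alpha|+|\beta|+|\alpha+\beta|)\max\{\|\nu_1\|,\|\nu_2\|,\|\nu_1-\nu_2\|\}$ for real measures $\nu_1,\nu_2$ and complex scalars $\alpha,\beta$. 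This is then discretized (approximate the Radon--Nikodym densities by simple functions, so the inequality becomes a statement about pairs of vectors in $\RR^n$ with the $\ell^1$ norm) and proved by convexity: the constraint set $\{\|\mathbf{x}\|_1\le1,\ \|\mathbf{y}\|_1\le1,\ \|\mathbf{x-y}\|_1\le1\}$ is a polytope, so it suffices to verify the inequality at its extreme points, and the authors classify those extreme points up to an explicit symmetry group, leaving a small finite check. The realness of the measures enters precisely through the $\ell^1$ discretization in $\RR^n$ rather than $\CC^n$; this is where the sharp $\tfrac12$ comes from, and it is invisible from the primal geometric viewpoint you adopt.
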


It is the "$\leq$" estimate in \eqref{E:3MTEstimate} that is the critical one. The lower bound "$\geq$" follows from the definition of the functional norm
\[ \frac{1}{2} \| \mu_j - \mu_k\| = \frac{1}{2}\sup_{f : \|f\|_X = 1} | \mu_j(f) - \mu_k(f)| \leq \|\LL\|_{C(X) \to \CC^3 / \CC \mathbf{1}}. \]

We will spend the rest of the section on proving Theorem~\ref{T:3MT}. The outline of the proof is as follows. We will first use duality to formulate the problem in terms of the adjoint operator $\LL^*$ between the dual spaces. Next, a discretization will help us reduce the dual problem to a finite-dimensional optimization problem. Finally, we will solve the finite-dimensional problem by the use of techniques of convex analysis. 

Before proceeding, we remark that the natural generalization of the above theorem to an arbitrary $n$-tuple of real-valued measures is valid. See Theorem~\ref{T:NMT} below.

\subsection{Dual problem}

Let us denote by $Y$ the space $\CC^3/\CC \mathbf{1}$ equipped with the norm in \eqref{E:C3supnorm}. Then the dual space $Y^*$ is the two-dimensional space of three-tuples $(\alpha,\beta, \gamma)$ of complex numbers which satisfy
\[ \alpha+\beta+\gamma = 0,\] and the norm on $Y^*$ is given by
\[ \|(\alpha,\beta,\gamma)\|_1 := |\alpha| + |\beta| + |\gamma|.\] In the case $\NN = C(X)$, the dual space $(C(X))^*$ is just the space of finite Borel measures on $X$. The adjoint operator $\LL^* : Y^* \to \NN^*$ is then given by 
\[ \LL^* : (\alpha,\beta,\gamma) \mapsto \alpha\mu_1 + \beta\mu_2 + \gamma\mu_3\] and the estimate \eqref{E:3MTEstimate} is equivalent to 
\begin{equation}
\| \alpha \mu_1 + \beta \mu_2 + \gamma \mu_3\| \leq \frac{|\alpha| + |\beta| + |\gamma|}{2} \max_{j,k} \|\mu_j - \mu_k\|.
\end{equation} Since $\alpha + \beta = -\gamma$ and $(\alpha + \beta + \gamma)\mu_3 = 0$, we may rewrite the above inequality into
\begin{equation} 
\| \alpha \nu_1 + \beta \nu_2\| \leq \frac{|\alpha| + |\beta| + |\alpha + \beta|}{2} \max \Bigl\{ \|\nu_1\|, \| \nu_2\|, \|\nu_1 - \nu_2\| \Bigr\}
\end{equation} where 
\[
\nu_1 := \mu_1 - \mu_3, \quad \nu_2 := \mu_2 - \mu_3. \] Note that $\nu_1$ and $\nu_2$ are real-valued if $\mu_1, \mu_2, \mu_3$ are real-valued. Theorem~\ref{T:3MT} is thus a consequence of the following slightly more general statement in which the topological structure of $X$ does not play a role.

\begin{proposition} \label{P:3MTDualProblem}
Let $\nu_1$ and $\nu_2$ be two finite real-valued measures on a measurable space $X$. Then for any complex numbers $\alpha, \beta$ we have the inequality
\begin{equation}\label{E:3m}
\|\alpha\nu_1+\beta\nu_2\|\le \frac{|\alpha|+|\beta|+|\alpha + \beta|}{2}\max \Bigl\{ \|\nu_1\|, \| \nu_2\|, \|\nu_1 - \nu_2\| \Bigr\},
\end{equation} where the norm on the right-hand side is the total variation norm $\|\mu\| := |\mu|(X)$. 
\end{proposition}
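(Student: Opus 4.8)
\noindent The plan is to reduce \eqref{E:3m} to an elementary three-variable inequality by splitting the pair $(\nu_1,\nu_2)$ into three ``sign-coherent'' pieces. First I would pass to densities: with $\lambda:=|\nu_1|+|\nu_2|$, a finite nonnegative measure, write $d\nu_j=f_j\,d\lambda$ for the real-valued Radon--Nikodym derivatives $f_1,f_2\in L^1(\lambda)$. Then $\|\alpha\nu_1+\beta\nu_2\|=\int_X|\alpha f_1+\beta f_2|\,d\lambda$, while $\|\nu_1\|=\int_X|f_1|\,d\lambda$, $\|\nu_2\|=\int_X|f_2|\,d\lambda$ and $\|\nu_1-\nu_2\|=\int_X|f_1-f_2|\,d\lambda$.

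\smallskip
\noindent The key step is a pointwise decomposition of the $\mathbb{R}^2$-valued density $(f_1,f_2)$ along the three fixed directions $(1,0)$, $(0,1)$, $(1,1)$. The three lines $\{t_2=0\}$, $\{t_1=0\}$, $\{t_1=t_2\}$ cut the $(t_1,t_2)$-plane into six sectors, and in each sector the point $(t_1,t_2)$ is a nonnegative combination of two of those three vectors. Thus for $\lambda$-a.e.\ $x$ I would choose, measurably, a triple $(g_1(x),g_2(x),g_3(x))$ of reals, at most two of them nonzero, with
\[
f_1=g_1+g_3,\qquad f_2=g_2+g_3,
\]
and with the coherence property that $|f_1|=|g_1|+|g_3|$, $|f_2|=|g_2|+|g_3|$, and $|f_1-f_2|=|g_1|+|g_2|$ pointwise. (Exactly one of the three representations obtained by setting one $g_i$ equal to $0$ has this property, and the selection is measurable since the sectors are cut out by linear inequalities; points lying on the lines, e.g.\ the case $\nu_1=\nu_2$, are harmless.) Setting $d\rho_i:=g_i\,d\lambda$ yields real measures with $\nu_1=\rho_1+\rho_3$, $\nu_2=\rho_2+\rho_3$, and $\nu_1-\nu_2=\rho_1-\rho_2$; integrating the three coherence identities gives
\[
\|\nu_1\|=A+C,\qquad \|\nu_2\|=B+C,\qquad \|\nu_1-\nu_2\|=A+B,
\]
where $A:=\|\rho_1\|$, $B:=\|\rho_2\|$, $C:=\|\rho_3\|$. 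Since $\alpha f_1+\beta f_2=\alpha g_1+\beta g_2+(\alpha+\beta)g_3$, the triangle inequality under the integral gives $\|\alpha\nu_1+\beta\nu_2\|\le |\alpha|A+|\beta|B+|\alpha+\beta|C$.

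\smallskip
\noindent It then remains to prove the scalar inequality
\[
aA+bB+cC\le\frac{a+b+c}{2}\,\max\{A+B,\ A+C,\ B+C\}\qquad(A,B,C\ge 0),
\]
with $a:=|\alpha|$, $b:=|\beta|$, $c:=|\alpha+\beta|$. The crucial observation is that $a,b,c$ are the side lengths of a (possibly degenerate) triangle: $c\le a+b$, and also $a=|(\alpha+\beta)-\beta|\le c+b$ and $b\le a+c$. Hence $x:=a+b-c$, $y:=a+c-b$, $z:=b+c-a$ are all nonnegative, with $x+y+z=a+b+c$ and $x+y=2a$, $x+z=2b$, $y+z=2c$; therefore
\[
aA+bB+cC=\tfrac12\bigl(x(A+B)+y(A+C)+z(B+C)\bigr)\le\tfrac12(x+y+z)\max\{A+B,A+C,B+C\},
\]
which is exactly the claimed bound. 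Combining this with the estimate $\|\alpha\nu_1+\beta\nu_2\|\le aA+bB+cC$ and the identities for $\|\nu_j\|$ and $\|\nu_1-\nu_2\|$ completes the proof.

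\smallskip
\noindent The step needing the most care is the construction of $(g_1,g_2,g_3)$ and the verification that all three coherence identities hold \emph{simultaneously}; everything afterward is bookkeeping plus the short inequality above. (An alternative, in the spirit of the outline for Theorem~\ref{T:3MT}, is to first approximate $\nu_1,\nu_2$ in total variation by finitely supported measures and carry out the same splitting atom by atom, reducing to a finite-dimensional convex optimization; the essential content is unchanged.)
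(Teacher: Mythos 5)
Your proof is correct, and it takes a genuinely different route from the paper's. The paper discretizes to simple functions, reduces to a finite-dimensional inequality for vectors in $\RR^n$, and then classifies (up to a symmetry group) the extreme points of the polytope $C_n=\{(\mathbf{x},\mathbf{y}):\|\mathbf{x}\|_1,\|\mathbf{y}\|_1,\|\mathbf{x-y}\|_1\le 1\}$, verifying the inequality on each of three representative extreme points. You instead perform a pointwise sign-coherent decomposition $f_1=g_1+g_3$, $f_2=g_2+g_3$ along the directions $(1,0),(0,1),(1,1)$ (the six sectors cut out by $t_1=0$, $t_2=0$, $t_1=t_2$), obtaining measures $\rho_1,\rho_2,\rho_3$ with the exact additivity $\|\nu_1\|=A+C$, $\|\nu_2\|=B+C$, $\|\nu_1-\nu_2\|=A+B$, and then observe that $a=|\alpha|$, $b=|\beta|$, $c=|\alpha+\beta|$ satisfy the triangle inequality, so writing $x=a+b-c$, $y=a+c-b$, $z=b+c-a\ge 0$ gives $aA+bB+cC=\tfrac12\bigl(x(A+B)+y(A+C)+z(B+C)\bigr)$, from which the bound is immediate. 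I checked the coherence identities in all six sectors and they do hold; measurability of the $g_i$ is clear since the sectors are cut out by linear inequalities in $(f_1,f_2)$ and the formulas agree on their common boundaries, and $|g_i|\le|f_1|+|f_2|\in L^1(\lambda)$ gives integrability. Your approach is shorter and more conceptual: it isolates the structural reason the coefficient $\tfrac{|\alpha|+|\beta|+|\alpha+\beta|}{2}$ appears (the triangle-inequality/semiperimeter identity), and it dispenses entirely with the discretization, the convex polytope, and the extreme-point classification. The paper's route, while longer, has the merit of reducing everything to three explicit finite test configurations, which makes the sharpness of the constant transparent; both proofs ultimately rest on the same three pairwise differences, but yours packages the argument in one pointwise decomposition rather than a polytope analysis.
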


In our next step, we shall simplify the problem further, and show that Proposition~\ref{P:3MTDualProblem} can be established by considering finite sets $X$ only.

\subsection{Discretization} 
With notations as in Proposition~\ref{P:3MTDualProblem}, set $\sigma:=|\nu_1|+|\nu_2|$. Then $\sigma$ is a positive finite measure on $X$, and by the Radon--Nikodym theorem
we have $d\nu_1=f\,d\sigma$ and $d\nu_2=g\,d\sigma$, where $f,g$ are bounded real measurable functions
on~$X$. For a moment, let $\|\cdot \|_{\sigma,1}$ denote the norm 
\[ \|f \|_{\sigma,1} := \int_X |f|\,d\sigma. \]
Then Proposition~\ref{P:3MTDualProblem} is equivalent to the inequality
\begin{equation}\label{E:2L1}
\int_X|\alpha f+\beta g|\,d\sigma
\le \frac{|\alpha|+|\beta|+|\alpha+\beta|}{2}\max\Bigl\{\|f\|_{\sigma,1},\|g\|_{\sigma,1},\|f-g\|_{\sigma,1}\Bigr\}.
\end{equation}
We will say that a function is \textit{simple} if it only takes on a finite number of distinct values. By standard measure theory, there exist simple measurable real functions $f_m$, $g_m$ on $X$  such that $f_m\to f$ and $g_m\to g$ uniformly on $X$. Clearly $\| \alpha f_m + \beta g_m\|_{\sigma,1} \to \| \alpha f + \beta g\|_{\sigma,1}$. Likewise
$\|f_m\|_{\sigma,1}\to\|f\|_{\sigma,1}$ and $\|g_m\|_{\sigma,1}\to\|g\|_{\sigma,1}$ and $\|f_m-g_m\|_{\sigma,1}\to\|f-g\|_{\sigma,1}$. Thus, if the inequality
\eqref{E:2L1} holds for each pair of simple functions, then it holds for $f,g$.
So it suffices to establish \eqref{E:2L1} when $f,g$ are simple measurable real functions.

Hence, suppose that $f,g$ are simple measurable real functions on $X$.
We can write them as $f=\sum_{j=1}^n a_j1_{X_j}$ and $g=\sum_{j=1}^n b_j 1_{X_j}$,
where $\{ X_1,\dots,X_n \}$ is a measurable partition of $X$, and $a_j,b_j\in\RR$ for all $j$.
The inequality in \eqref{E:2L1} becomes
\[
\sum_{j=1}^n |\alpha a_j+\beta b_j|\sigma(X_j)
\le \frac{|\alpha|+|\beta|+|\alpha+\beta|}{2}\max\Bigl\{\sum_{j=1}^n|a_j|\sigma(X_j),\sum_{j=1}^n|b_j|\sigma(X_j),
\sum_{j=1}^n |a_j-b_j|\sigma(X_j)\Bigr\}.
\]
Writing 
\[x_j:=a_j\sigma(X_j), \quad \mathbf{x} = (x_1, \ldots, x_n)^T \in \RR^n\] and \[ y_j:=b_j\sigma(X_j), \quad \mathbf{y} = (y_1, \ldots, y_n)^T \in \RR^n\] we see that this becomes
\[
\|\alpha \mathbf{x}+\beta \mathbf{y}\|_1\le \frac{|\alpha|+|\beta|+|\alpha+\beta|}{2}\max\Bigl\{\|\mathbf{x}\|_1,\|\mathbf{y}\|_1,\|\mathbf{x-y}\|_1\Bigr\},
\]
where now $\mathbf{x},\mathbf{y}$ are vectors in $\RR^n$ and $\|\cdot\|_1$ denotes the usual $\ell^1$-norm on $\RR^n$ given by 
\begin{equation}
\label{E:ell1normDef} \|\mathbf{x}\|_1 := \sum_{j=1}^n |x_j|.
\end{equation} To summarize, to prove Proposition~\ref{P:3MTDualProblem} and consequently to prove Theorem~\ref{T:3MT}, it suffices to establish the following discrete result.

\begin{proposition}\label{P:3MTDiscreteFormulation}
Let $n\ge1$ and $\mathbf{x},\mathbf{y}\in\RR^n$. Then for all complex numbers $\alpha, \beta$ we have the inequality
\begin{equation} \label{E:3MTDiscreteFormulationEq}
\|\alpha \mathbf{x}+\beta \mathbf{y}\|_1\le \frac{|\alpha|+|\beta|+|\alpha+\beta|}{2}\max\Bigl\{\|\mathbf{x}\|_1,\|\mathbf{y}\|_1,\|\mathbf{x-y}\|_1\Bigr\}´.
\end{equation}
\end{proposition}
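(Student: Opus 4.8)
The inequality \eqref{E:3MTDiscreteFormulationEq} is homogeneous in $(\mathbf{x},\mathbf{y})$ jointly and separately homogeneous (of degree one) in $(\alpha,\beta)$, so after scaling I would reduce to the case where the right-hand maximum equals $1$, i.e.\ we are on the set $\{\|\mathbf{x}\|_1\le1,\ \|\mathbf{y}\|_1\le1,\ \|\mathbf{x}-\mathbf{y}\|_1\le1\}$, and we wish to bound $\|\alpha\mathbf{x}+\beta\mathbf{y}\|_1$ by $\tfrac12(|\alpha|+|\beta|+|\alpha+\beta|)$. For fixed $(\alpha,\beta)$ the function $(\mathbf{x},\mathbf{y})\mapsto\|\alpha\mathbf{x}+\beta\mathbf{y}\|_1$ is convex, and the constraint region is a symmetric convex polytope in $\RR^{2n}$, so the maximum is attained at an extreme point of that polytope. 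The key structural fact I would establish is that the extreme points of $\{(\mathbf{x},\mathbf{y}):\|\mathbf{x}\|_1\le1,\|\mathbf{y}\|_1\le1,\|\mathbf{x}-\mathbf{y}\|_1\le1\}$ are \emph{supported on a single coordinate}, i.e.\ of the form $(s\,e_i,\,t\,e_i)$ with $e_i$ a standard basis vector and $(s,t)\in\RR^2$ an extreme point of the planar region $P:=\{(s,t):|s|\le1,|t|\le1,|s-t|\le1\}$. Granting this reduction, the whole problem collapses to the case $n=1$: one must show $|\alpha s+\beta t|\le\tfrac12(|\alpha|+|\beta|+|\alpha+\beta|)$ for all $(s,t)\in P$, equivalently for the six vertices of the hexagon $P$, namely $(1,0),(1,1),(0,1),(-1,0),(-1,-1),(0,-1)$. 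At those vertices $|\alpha s+\beta t|$ takes the values $|\alpha|,|\alpha+\beta|,|\beta|$ (twice each), each of which is obviously $\le\tfrac12(|\alpha|+|\beta|+|\alpha+\beta|)$ since a single term of three nonnegative reals is at most their average only if\,\dots — more simply, $|\alpha|\le\tfrac12(|\alpha|+|\beta|+|\alpha+\beta|)$ follows from $|\alpha|\le|\beta|+|\alpha+\beta|$, the triangle inequality applied to $\alpha=(\alpha+\beta)-\beta$. So the $n=1$ case is immediate, and everything rests on the extreme-point claim.

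\medskip
\textbf{Proving the extreme-point claim.} This is the step I expect to be the main obstacle, and I would argue it as follows. Suppose $(\mathbf{x},\mathbf{y})$ is an extreme point of the polytope $Q:=\{\|\mathbf{x}\|_1\le1,\|\mathbf{y}\|_1\le1,\|\mathbf{x}-\mathbf{y}\|_1\le1\}\subset\RR^{2n}$ whose support (the set of indices $i$ with $(x_i,y_i)\ne(0,0)$) has size at least two. The idea is to "split off" one coordinate: for each index $i$ in the support, define the rank-one perturbation $(\mathbf{x},\mathbf{y})\mapsto(\mathbf{x},\mathbf{y})\pm\varepsilon\,(x_i e_i,\,y_i e_i)$, i.e.\ rescale just the $i$-th coordinate pair. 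Since $(x_i,y_i)$ and the complementary vector $(\mathbf{x}-x_ie_i,\mathbf{y}-y_ie_i)$ have disjoint supports, each of the three $\ell^1$-norms decomposes additively as $\|\mathbf{x}\|_1=|x_i|\cdot 1+\|\mathbf{x}-x_ie_i\|_1$, etc., so the perturbation changes each constraint functional \emph{affinely} in the single scalar parameter $\varepsilon$. If all three constraints are slack for this perturbation in a neighborhood of $\varepsilon=0$ we contradict extremality; hence at least one of the three $\ell^1$-constraints must be \emph{tight both before and after an arbitrarily small perturbation in both directions}, which forces that constraint functional to be locally constant along the perturbation — but an affine function of $\varepsilon$ constant near $0$ is constant, meaning $|x_i|+|y_i|$ or $|x_i-y_i|$ contributes nothing, i.e.\ the relevant planar point $(x_i,y_i)$ lies on the boundary of $P$ \emph{in a flat direction compatible with the whole vector}. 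Iterating / summing this over the support and using that $P$ is two-dimensional forces the support to be a single index. (A cleaner way to phrase the same argument: $Q$ is exactly the set of $(\mathbf{x},\mathbf{y})$ such that the "coordinatewise measure" $\sum_i\delta_i\cdot(x_i,y_i)$ has, in the language of the earlier discretization, total $P$-gauge at most $1$; its extreme points are the Dirac-type configurations, which is precisely the statement that the unit ball of $\ell^1(P\text{-gauge})$ has extreme points supported at one point — a standard fact, the analogue of "extreme points of the $\ell^1$ ball are $\pm e_i$", relative to the gauge of the hexagon $P$.)

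\medskip
\textbf{An alternative route, in case the extreme-point bookkeeping is messy.} Rather than the geometric reduction, one can try a direct inductive proof on $n$. For $n=1$ the inequality is the triangle-inequality computation above. For the inductive step, given $\mathbf{x},\mathbf{y}\in\RR^n$ one would like to merge two coordinates $j,k$ into one without decreasing the left side or increasing the right side; but $\|\cdot\|_1$ is additive on disjoint supports only when signs agree, so one first sorts coordinates so that within each "sign class" of the triple $(\operatorname{sgn}x_i,\operatorname{sgn}y_i,\operatorname{sgn}(x_i-y_i))$ — of which there are finitely many types, essentially the six open edges/vertices directions of the hexagon $P$ — the norms add up cleanly, collapsing each class to a single coordinate and reducing to a bounded-size case ($n\le 6$), which one then checks is again governed by the hexagon vertices. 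Either way, the crux is the same: \emph{the constraint polytope's extreme points live on one coordinate}, and once that is in hand the estimate is the elementary planar inequality $|\alpha s+\beta t|\le\tfrac12(|\alpha|+|\beta|+|\alpha+\beta|)$ on the hexagon $P$. I would present the extreme-point argument as the main lemma and devote the bulk of the write-up to making the "split off a coordinate" perturbation rigorous, since that is where the real content lies; the rest is bookkeeping and the trivial $n=1$ check.
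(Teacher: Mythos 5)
Your overall strategy coincides with the paper's: normalize so that $\max\{\|\mathbf{x}\|_1,\|\mathbf{y}\|_1,\|\mathbf{x}-\mathbf{y}\|_1\}=1$, pass to extreme points of the polytope $C_n$, and verify the inequality there. But the key structural claim on which everything rests --- that the extreme points of $C_n$ are of the form $(s\,e_i,\,t\,e_i)$, supported on a single coordinate --- is \emph{false}, and the paper's Proposition~\ref{P:PolytopeExtrPoints} shows exactly what goes wrong. Up to the symmetries of $C_n$ there are three classes of extreme points, and two of them have multi-coordinate support: $\bigl((1,0,0,\dots),(\tfrac12,\tfrac12,0,\dots)\bigr)$ and $\bigl((\tfrac12,\tfrac12,0,\dots),(\tfrac12,0,\tfrac12,0,\dots)\bigr)$. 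In fact the second of these is the one where the inequality is \emph{sharp}: taking $\alpha=1$, $\beta=i$ gives
\[
\|\alpha\mathbf{x}+\beta\mathbf{y}\|_1=\Bigl|\tfrac{1+i}{2}\Bigr|+\tfrac12+\tfrac12=\frac{2+\sqrt2}{2}
=\frac{|\alpha|+|\beta|+|\alpha+\beta|}{2},
\]
whereas the maximum of $|\alpha s+\beta t|$ over the hexagon $P$ is only $\max\{|\alpha|,|\beta|,|\alpha+\beta|\}=\sqrt2$. So the single-coordinate extreme points cannot produce the constant $\tfrac12(|\alpha|+|\beta|+|\alpha+\beta|)$; restricting to them would give a strictly smaller supremum of the left-hand side and would not prove the stated inequality with the correct constant (even though the final numerical bound would still be true by accident in this instance, the claimed equality of suprema fails).

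The flaw in the perturbation argument is the step where a tight constraint is said to be forced to be \emph{locally constant} along the perturbation $(\mathbf{x},\mathbf{y})\pm\varepsilon(x_ie_i,y_ie_i)$. Along that direction the three constraint values change affinely with nonnegative slopes $|x_i|$, $|y_i|$, $|x_i-y_i|$; to block a two-sided segment it is enough that \emph{some} constraint be tight with \emph{strictly positive} slope, since then $\varepsilon>0$ immediately leaves $C_n$ while $\varepsilon<0$ is irrelevant to extremality. Nothing forces the slope to vanish, so nothing forces $(x_i,y_i)=0$. Your parenthetical ``cleaner way'' suffers from a related confusion: $C_n$ is the unit ball of $\max\{\|\mathbf{x}\|_1,\|\mathbf{y}\|_1,\|\mathbf{x}-\mathbf{y}\|_1\}$, which is \emph{not} the $\ell^1$-sum of the coordinatewise $P$-gauges $\sum_i p(x_i,y_i)$; the latter is in general strictly larger, so the analogy with ``extreme points of an $\ell^1(V)$ ball are Dirac'' does not apply. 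The alternative merging route has a similar directionality problem: $|\alpha x_i+\beta y_i|+|\alpha x_j+\beta y_j|\ge|\alpha(x_i+x_j)+\beta(y_i+y_j)|$, so merging coordinates can only \emph{decrease} the left-hand side, which reduces to a weaker statement rather than the desired one. To repair the proof you need the full extreme-point classification as in the paper: Lemmas~\ref{L:ExtremePointsProofLem1} and~\ref{L:ExtremePointsProofLem2} use perturbations along directions such as $(\mathbf{d},\mathbf{d})$ with $\mathbf{d}=(1,-1,0,\dots,0)$ which mix coordinates and are not of your rank-one rescaling form, and the resulting case analysis produces the two- and three-coordinate extreme points that your argument misses.
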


This reduction of the problem to the finite-dimensional setting allows us to use the tools of convex analysis.

\subsection{Optimization over a convex set}

Consider the set 
\begin{equation}\label{E:PolytopeDef}
C_n := \Bigl \{ (\mathbf{x},\mathbf{y}) \in \RR^n \times \RR^n : \|\mathbf{x}\|_1 \leq 1, \|\mathbf{y} \|_1 \leq 1, \| \mathbf{x - y}\|_1 \leq 1 \Bigr\}.
\end{equation}
Thus $C_n$ is a compact convex polytope in $\RR^n \times \RR^n$, and so it has a finite number of extreme points. That is, points of $C_n$ which do not lie in the interior of any line segment in $C_n$. A well-known theorem of Carathéodory says that each point of a compact convex polytope is a convex combination of its extreme points.

\begin{lemma} \label{L:ExtrPointIneqSufficiency} In order to establish Proposition~\ref{P:3MTDiscreteFormulation}, it suffices to show that the inequality \eqref{E:3MTDiscreteFormulationEq} holds for every extreme point of $C_n$.
\end{lemma}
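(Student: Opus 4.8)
The plan is to combine a homogeneity reduction with the elementary fact that a convex function on a compact convex polytope attains its maximum at an extreme point; the latter will be extracted directly from Carathéodory's theorem, which has already been recalled.

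First I would dispose of the degenerate case. If $M:=\max\{\|\mathbf{x}\|_1,\|\mathbf{y}\|_1,\|\mathbf{x-y}\|_1\}=0$, then $\mathbf{x}=\mathbf{y}=\mathbf{0}$ and \eqref{E:3MTDiscreteFormulationEq} is trivial. Assuming $M>0$, I replace $(\mathbf{x},\mathbf{y})$ by $(\mathbf{x}/M,\mathbf{y}/M)$: both sides of \eqref{E:3MTDiscreteFormulationEq} are positively homogeneous of degree $1$ in $(\mathbf{x},\mathbf{y})$, and the rescaled pair lies in $C_n$. Hence it suffices to prove that
\[
\|\alpha \mathbf{x}+\beta \mathbf{y}\|_1 \le \frac{|\alpha|+|\beta|+|\alpha+\beta|}{2} \qquad \text{for every } (\mathbf{x},\mathbf{y})\in C_n .
\]

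Next I would observe that the function $\phi:\RR^n\times\RR^n\to[0,\infty)$ given by $\phi(\mathbf{x},\mathbf{y}):=\|\alpha\mathbf{x}+\beta\mathbf{y}\|_1$ is convex, being the composition of the $\ell^1$-norm with the (real-)linear map $(\mathbf{x},\mathbf{y})\mapsto\alpha\mathbf{x}+\beta\mathbf{y}$; here one views the codomain $\CC^n$ as $\RR^{2n}$, so convexity is unaffected by the scalars $\alpha,\beta$ being complex. By Carathéodory's theorem, any $(\mathbf{x},\mathbf{y})\in C_n$ can be written as a convex combination $\sum_i t_i(\mathbf{x}^{(i)},\mathbf{y}^{(i)})$ of finitely many extreme points of $C_n$, with $t_i\ge0$ and $\sum_i t_i=1$. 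Convexity of $\phi$ then gives $\phi(\mathbf{x},\mathbf{y})\le\sum_i t_i\,\phi(\mathbf{x}^{(i)},\mathbf{y}^{(i)})$. For each extreme point, the hypothesis of the lemma — that \eqref{E:3MTDiscreteFormulationEq} holds there — together with the bound $\max\{\|\mathbf{x}^{(i)}\|_1,\|\mathbf{y}^{(i)}\|_1,\|\mathbf{x}^{(i)}-\mathbf{y}^{(i)}\|_1\}\le1$ valid throughout $C_n$, yields $\phi(\mathbf{x}^{(i)},\mathbf{y}^{(i)})\le\tfrac12(|\alpha|+|\beta|+|\alpha+\beta|)$. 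Summing against the weights $t_i$ finishes the argument.

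There is no genuine obstacle here: the only points demanding a little care are the homogeneity normalization (and the trivial case $M=0$ excluded from it) and verifying that $\phi$ is convex — which holds because a norm pulled back by a linear map is always convex, irrespective of whether $\alpha,\beta$ are real or complex — so that the "maximum at an extreme point" principle genuinely applies.
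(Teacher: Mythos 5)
Your argument is correct and follows essentially the same route as the paper: homogeneous rescaling into $C_n$, a Carath\'eodory decomposition into extreme points, and the triangle inequality (convexity of $\|\alpha\cdot+\beta\cdot\|_1$) to pass the bound through the convex combination. The only cosmetic difference is that the paper observes that each extreme point satisfies $\max\{\|\mathbf{e^k}\|_1,\|\mathbf{f^k}\|_1,\|\mathbf{e^k-f^k}\|_1\}=1$, whereas you only invoke the inequality $\le 1$ valid everywhere on $C_n$ — which suffices and is marginally simpler.
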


\begin{proof}
Let us fix $\alpha, \beta \in \CC$ and $(\mathbf{x},\mathbf{y}) \in \RR^n \times \RR^n$. By the homogeneity of the inequality in \eqref{E:3MTDiscreteFormulationEq}, we may assume that
\begin{equation}
\label{E:NormalizationE1} \max  \Bigl \{ \|\mathbf{x}\|_1, \|\mathbf{y}\|_1, \|\mathbf{x-y}\|_1 \Bigr\}  = 1.\end{equation}
Then $(\mathbf{x},\mathbf{y}) \in C_n$ and so we may express it as a convex combination of the extreme points of $C_n$, namely
\[
\mathbf{x} = \sum_{k=1}^m t_k \mathbf{e^k}, \quad \mathbf{y} = \sum_{k=1}^m t_k \mathbf{f^k}
\] where $\mathbf{e^k} \in \RR^n$, $\mathbf{f^k} \in \RR^n$, the pairs $(\mathbf{e^k}, \mathbf{f^k})$ are extreme points of $C_n$, $t_k > 0$, and $\sum_{k=1}^m t_k = 1$. Note that since $(\mathbf{e^k}, \mathbf{f^k})$ is an extreme point of $C_n$, we must have 
\[ \max  \Bigl \{ \|\mathbf{e^k}\|_1, \|\mathbf{f^k}\|_1, \|\mathbf{e^k - f^k}\|_1 \Bigr\} = 1.\]
Since we are assuming that \eqref{E:3MTDiscreteFormulationEq} holds for extreme points, we can estimate
\begin{align*}
\| \alpha \mathbf{x} + \beta \mathbf{y}\|_1 &\leq \sum_{k=1}^m t_k \| \alpha \mathbf{e^k} + \beta \mathbf{f^k}\|_1 \\
&\leq  \sum_{k=1}^m t_j \frac{|\alpha| + |\beta| + |\alpha + \beta|}{2}  \max  \Bigl \{ \|\mathbf{e^k}\|_1, \|\mathbf{f^k}\|_1, \|\mathbf{e^k - f^k}\|_1 \Bigr\} \\
&= \frac{|\alpha| + |\beta| + |\alpha + \beta|}{2}\sum_{k=1}^m t_k \\
&= \frac{|\alpha| + |\beta| + |\alpha + \beta|}{2}.
\end{align*} Recalling our normalization in \eqref{E:NormalizationE1}, this is the desired estimate in \eqref{E:3MTDiscreteFormulationEq}.
\end{proof} 

From the above lemma and our sequence of reductions above, it follows that in order to prove Theorem~\ref{T:3MT} it suffices show that the inequality \eqref{E:3MTDiscreteFormulationEq} holds at every extreme point of the polytope $C_n$. Proposition~\ref{P:PolytopeExtrPoints} below characterizes these extreme points by partitioning them into three equivalence classes.

Note that $C_n$ is invariant under the following linear symmetries:
\begin{align}
&\begin{cases} \label{E:CSymmetry1}
x_j'&:= x_{\pi(j)} \\
y_j'&:= y_{\pi(j)} 
\end{cases}
\intertext{where $\pi$ is any permutation of $\{1,2,\dots,n\}$,}
&\begin{cases} \label{E:CSymmetry2} 
x_j'&:=\epsilon_j x_j \\
y_j'&:=\epsilon_j y_j 
\end{cases}
\intertext{for any choice of $\epsilon_1,\dots,\epsilon_n\in\{-1,1\}$, }
&\begin{cases} \label{E:CSymmetry3}
\mathbf{x'} &:=\mathbf{y} \\
\mathbf{y'} &:=\mathbf{x} 
\end{cases}
\intertext{and}
&\begin{cases} \label{E:CSymmetry4} 
\mathbf{x'}&:=\mathbf{x} \\
\mathbf{y'}&:=\mathbf{x-y}
\end{cases}
\end{align}
Denote by $G_n$ the group generated by these symmetries. As  these symmetries
are linear automorphisms of $\RR^n \times \RR^n$, it is clear that $G_n$ leaves invariant the set of extreme points $C_n$. We say that two extreme points of $C_n$ are $\mathit{G_n}$-\emph{equivalent} if there is an element of $G_n$ mapping one of them to the other. 
Thus the action of $G_n$ on $C_n$ partitions the set of extreme points of $C_n$ into a finite number of equivalence classes. Note that if the inequality \eqref{E:3MTDiscreteFormulationEq} holds for some $(\mathbf{x},\mathbf{y}) \in C_n$, then it holds also for any point of $C_n$ in the orbit of $(\mathbf{x},\mathbf{y})$ under the group action of $G_n$ on $C_n$.

The extreme points of $C_n$ are identified in the following proposition.

\begin{proposition}\label{P:PolytopeExtrPoints}
If $n\ge3$, then every extreme point $(\mathbf{x},\mathbf{y})$ of $C_n$ is $G_n$-equivalent to one of the pairs
\[
\begin{pmatrix}
1\\0\\0\\0\\ \vdots\\ 0
\end{pmatrix},
\begin{pmatrix}
1\\0\\0\\0\\ \vdots\\ 0
\end{pmatrix}
\quad\text{and}\quad
\begin{pmatrix}
1\\0\\0\\0\\ \vdots\\ 0
\end{pmatrix},
\begin{pmatrix}
1/2\\1/2\\0\\0\\ \vdots\\ 0
\end{pmatrix}
\quad\text{and}\quad
\begin{pmatrix}
1/2\\1/2\\0\\0\\ \vdots\\0
\end{pmatrix},
\begin{pmatrix}
1/2\\0\\1/2\\0\\ \vdots\\ 0
\end{pmatrix}.
\]
\end{proposition}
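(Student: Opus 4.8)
The plan is to characterize extreme points of $C_n$ by a linear-algebraic rank argument: at an extreme point, enough of the defining constraints of the polytope must be ``active'' (tight) so that the point is uniquely pinned down. The polytope $C_n \subset \RR^{2n}$ is cut out by the three $\ell^1$-ball conditions $\|\mathbf{x}\|_1 \le 1$, $\|\mathbf{y}\|_1 \le 1$, $\|\mathbf{x}-\mathbf{y}\|_1 \le 1$; each of these unfolds into $2^n$ linear inequalities (one per choice of signs for the coordinates), so $C_n$ is a genuine polytope. An extreme point is a vertex, hence the unique solution of the $2n$-dimensional linear system obtained by turning $2n$ linearly independent active constraints into equalities. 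The first step is therefore to record, for a putative extreme point $(\mathbf{x},\mathbf{y})$, which coordinates $x_j, y_j$ vanish and which of the three norms equal $1$, and to count: writing $S$ for the support of $\mathbf{x}$, $S'$ for that of $\mathbf{y}$, and $S''$ for that of $\mathbf{x}-\mathbf{y}$, the vanishing coordinates already contribute $2n - |S| - |S'|$ equalities, and each of the three active norm constraints (those among $\|\mathbf{x}\|_1, \|\mathbf{y}\|_1, \|\mathbf{x}-\mathbf{y}\|_1$ that equal $1$) contributes at most one more. So to reach $2n$ independent equalities we need the number of active norm constraints to be at least $|S| + |S'| - (2n - \text{something})$; made precise, this forces $|S| + |S'| \le \#\{\text{active norms}\} \le 3$, hence $|S|+|S'| \le 3$ after also checking the relevant sign-linear functionals are independent. (One must be slightly careful: on the support of, say, $\mathbf{x}-\mathbf{y}$ the active inequality is $\sum \varepsilon_j (x_j - y_j) = 1$ with $\varepsilon_j = \operatorname{sgn}(x_j-y_j)$, and these three ``norm rows'' together with the coordinate rows $e_j$ for $j \notin S$, $e_{n+j}$ for $j \notin S'$ must span $\RR^{2n}$.)

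The second step is to enumerate the finitely many combinatorial types with $|S| + |S'| \le 3$ and, using the symmetry group $G_n$, collapse them to the three listed representatives. Using the coordinate permutations \eqref{E:CSymmetry1} we may assume the supports occupy initial segments of indices; using the sign flips \eqref{E:CSymmetry2} we may assume all nonzero $x_j \ge 0$ (and then track signs of $y_j$); using \eqref{E:CSymmetry3} we may assume $|S| \ge |S'|$; and using \eqref{E:CSymmetry4}, which replaces $(\mathbf{x},\mathbf{y})$ by $(\mathbf{x}, \mathbf{x}-\mathbf{y})$ and hence permutes the roles of $S', S''$, we gain a further reduction. The cases are then: $(|S|,|S'|) = (1,1)$, $(2,1)$, $(1,2)$, $(1,0)$, $(0,1)$, $(2,0)$, etc. On each support the values are forced: e.g. if $|S|=1$ then $x$ has a single entry which must be $\pm 1$ (else the point is a midpoint of two points of $C_n$ obtained by perturbing that entry while staying in all three balls), giving after normalization the vector $(1,0,\dots,0)$; if $|S|=2$ with both norm constraints on $\mathbf{x}$-side active one gets $(1/2,1/2,0,\dots)$; and so on. Matching up which norms are active and discarding configurations that fail to be genuine vertices (i.e. that remain midpoints of a segment in $C_n$) leaves exactly the three pairs displayed. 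The degenerate low-support cases like $(1,0)$ are seen not to be extreme because one can still move $\mathbf{y}$ a little in several directions, or they coincide with $G_n$-images of the listed ones.

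The main obstacle I expect is the bookkeeping in the rank/independence count: one has to verify that the active norm-constraint rows (the sign vectors $\varepsilon^{(1)}, \varepsilon^{(2)}, \varepsilon^{(3)}$ restricted to the appropriate coordinate blocks) are linearly independent from each other and from the ``kill a coordinate'' rows, and that when fewer than $2n$ independent constraints are active the point genuinely lies on a segment — exhibiting that segment explicitly in each borderline combinatorial type. A clean way to organize this is to argue contrapositively: if $(\mathbf{x},\mathbf{y}) \in C_n$ has $|S| + |S'| \ge 4$, produce a nonzero $(\mathbf{u},\mathbf{v})$ supported on $S \cup (n+S')$ with $\sum_{j\in S}\varepsilon_j u_j = 0$ for each active norm (a linear system with more unknowns than equations once $|S|+|S'|\ge 4$), and check $(\mathbf{x},\mathbf{y}) \pm t(\mathbf{u},\mathbf{v}) \in C_n$ for small $t$, contradicting extremality; this isolates the whole difficulty into one dimension-counting inequality and makes the rest a finite, symmetry-reduced check.
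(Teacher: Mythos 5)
Your dimension count has a genuine gap, and the resulting bound $|S|+|S'|\le 3$ is in fact \emph{false}: the third extreme point has $\mathbf{x}=(1/2,1/2,0,\dots)$ and $\mathbf{y}=(1/2,0,1/2,0,\dots)$, so $|S|+|S'|=4$. Your enumeration would therefore miss precisely the $(|S|,|S'|)=(2,2)$ case, which is not a ``borderline type to discard'' but one of the three genuine families. The error is that you only count coordinate-killing rows coming from $j\notin S$ and $j\notin S'$, omitting the rows coming from $j\notin S''$. When $\|\mathbf{x}-\mathbf{y}\|_1=1$ is tight, each index $j$ with $x_j-y_j=0$ contributes a tight difference $(e_j,-e_j)$ of sign-pattern inequalities; for $j\notin S\cup S'$ this lies in the span you already have, but for $j\in S\cap S'\setminus S''$ (i.e.\ $x_j=y_j\neq 0$) it is a genuinely new direction. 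Indeed, in the third pair the index $j=1$ has $x_1=y_1=1/2\neq 0$, and the vector $(e_1,-e_1)$ is exactly what pushes the rank up to $2n$. Your ``contrapositive'' repair inherits the same gap: a perturbation $(\mathbf{u},\mathbf{v})$ supported on $S\cup(n+S')$ that merely annihilates the three sign functionals is \emph{not} enough to stay in $C_n$, because if $u_j\neq v_j$ for some $j\in S\cap S'\setminus S''$ then $\|(\mathbf{x}-\mathbf{y})+t(\mathbf{u}-\mathbf{v})\|_1$ picks up the nonsmooth term $|t|\,|u_j-v_j|$ and exceeds $1$ for both signs of small $t$. You must additionally impose $u_j=v_j$ on $S\cap S'\setminus S''$, which changes the parameter count from $|S|+|S'|-3$ to $|S|+|S'|-|S\cap S'\setminus S''|-3$, and then $(2,2)$ with $|S\cap S'\setminus S''|=1$ survives as it should.

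The paper sidesteps this bookkeeping entirely. Rather than an LP vertex/rank argument over the $3\cdot 2^n$ sign-pattern inequalities, it proves two ad hoc perturbation lemmas: Lemma~\ref{L:ExtremePointsProofLem1} (if two indices $j,k$ have two of the products $x_jx_k$, $y_jy_k$, $(x_j-y_j)(x_k-y_k)$ nonzero with the same sign, the explicit direction $\mathbf{d}=(1,-1,0,\dots,0)$ gives a segment through the point) and Lemma~\ref{L:ExtremePointsProofLem2} (no vector has three or more nonzero coordinates), and then does a three-way case split on how many of the three norms equal $1$ and on the support sizes. That route is lower-tech but avoids the pitfall you ran into. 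If you want to save the rank-counting approach, you need to count, in addition to the $e_j$-rows from $j\notin S$ and $j\notin S'$, the $(e_j,-e_j)$-rows from $j\notin S''$, and then verify linear independence of the resulting system together with the (at most three) sign-vector rows; that will recover all three families.
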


One can readily check that each of the three above pairs really is an extreme point of $C_n$. We omit the proof, since we do not actually need this fact. In the case that $n = 1$, the same result holds, but only the first kind of pair can arise. Likewise, if $n = 2$, the same result holds, but only the first two types of pairs can arise.

We will prove Proposition~\ref{P:PolytopeExtrPoints} in Section~\ref{S:ExtrPointSubsection}. For now let us see how Theorem~\ref{T:3MT} follows. In order to verify \eqref{E:3MTDiscreteFormulationEq} for all extreme points of $C_n$, it suffices to verify the inequality for the three pairs of vectors appearing in Proposition~\ref{P:PolytopeExtrPoints}. This is an easy task. For instance, if $(\mathbf{x},\mathbf{y})$ is the second pair in Proposition~\ref{P:PolytopeExtrPoints}, then we have
\begin{align*}
\|\alpha \mathbf{x} + \beta \mathbf{y}\|_1 &= |\alpha + \beta /2 | + |\beta|/2 
\\ &= |\alpha/2 + \alpha/2 + \beta/2| + |\beta|/2 
\\&\leq |\alpha + \beta|/2 + |\alpha|/2 + |\beta|/2.
\end{align*}
The inequality for the other two pairs is verified similarly. Then from Lemma~\ref{L:ExtrPointIneqSufficiency} we conclude that Proposition~\ref{P:3MTDiscreteFormulation} holds, from which Theorem~\ref{T:3MT} follows by the earlier reduction. 

It remains to prove Proposition~\ref{P:PolytopeExtrPoints}.

\subsection{Extreme points of the polytope}
\label{S:ExtrPointSubsection}

In the proof of Proposition~\ref{P:PolytopeExtrPoints}, the group $G_n$ generated by the symmetries \eqref{E:CSymmetry1}-\eqref{E:CSymmetry4} will be extensively used. In particular we will use the property that $(\mathbf{x},\mathbf{y})$ is an extreme point of $C_n$ if and only if some extreme point of $C_n$ is $G_n$-equivalent to it. Moreover, the following two observations will be useful to single out.

\begin{lemma} \label{L:ExtremePointsProofLem1}
If for a pair $(\mathbf{x},\mathbf{y}) \in C_n$ there exists two distinct indices $j,k$ such that 
\[
x_j > 0, \quad y_j > 0, \quad x_k > 0, \quad y_k > 0
\]
then $(\mathbf{x},\mathbf{y})$ is not an extreme point of $C_n$. 

More generally, if for two distinct indices $j, k$ we have that two of the quantities $x_jx_k$, $y_jy_k$ and $(x_j-y_j)(x_k-y_k)$ are non-zero and have the same sign, then $(\mathbf{x},\mathbf{y})$ is not an extreme point of $C_n$.
\end{lemma}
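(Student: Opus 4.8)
The plan is to show that if such indices exist, then we can perturb $(\mathbf{x},\mathbf{y})$ in two opposite directions while staying in $C_n$, exhibiting it as the midpoint of a nontrivial segment. First I would reduce the "more general" statement to the special case using the symmetries of $C_n$. The three quantities $x_jx_k$, $y_jy_k$, $(x_j-y_j)(x_k-y_k)$ are precisely the pairwise products associated to the three "coordinate systems" $\mathbf{x}$, $\mathbf{y}$, $\mathbf{x-y}$ exchanged by the symmetries \eqref{E:CSymmetry3} and \eqref{E:CSymmetry4}; applying an element of $G_n$ we may assume the two nonzero products of equal sign are $x_jx_k$ and $y_jy_k$. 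Then, using the sign-flip symmetry \eqref{E:CSymmetry2} on coordinates $j$ and $k$ together with the permutation symmetry \eqref{E:CSymmetry1}, we may assume all four numbers $x_j,y_j,x_k,y_k$ are strictly positive, and (after permuting) that $j=1$, $k=2$. So it suffices to treat the first displayed hypothesis.

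Next, with $x_1,y_1,x_2,y_2>0$, I would define for small $t\in\RR$ the perturbed pair $(\mathbf{x}^t,\mathbf{y}^t)$ by
\[
x_1^t := x_1 + t,\quad x_2^t := x_2 - t,\quad y_1^t := y_1 + s t,\quad y_2^t := y_2 - s t,
\]
leaving all other coordinates unchanged, where $s$ is a fixed real constant to be chosen. The point is that this perturbation is designed to leave the relevant $\ell^1$-norms unchanged to first order: since $x_1,x_2>0$, for $|t|$ small we have $\|\mathbf{x}^t\|_1 = \|\mathbf{x}\|_1$ exactly (the increments $+t$ and $-t$ cancel inside the absolute values), and similarly $\|\mathbf{y}^t\|_1 = \|\mathbf{y}\|_1$ because $y_1,y_2>0$. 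For the third constraint, $(x_1-y_1)-(x_2-y_2)$ gets incremented by $t(1-s)$ in coordinate $1$ and $-t(1-s)$ in coordinate $2$; choosing $s=1$ makes $\mathbf{x}^t-\mathbf{y}^t$ independent of $t$ altogether, so $\|\mathbf{x}^t-\mathbf{y}^t\|_1=\|\mathbf{x}-\mathbf{y}\|_1$. Hence for all sufficiently small $|t|$ the pair $(\mathbf{x}^t,\mathbf{y}^t)$ lies in $C_n$, and $(\mathbf{x},\mathbf{y}) = \tfrac12\big((\mathbf{x}^{t_0},\mathbf{y}^{t_0}) + (\mathbf{x}^{-t_0},\mathbf{y}^{-t_0})\big)$ for a small $t_0>0$, with the two endpoints distinct. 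Therefore $(\mathbf{x},\mathbf{y})$ is not an extreme point.

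The only genuinely delicate point is the reduction step: one must check that the group $G_n$ really does act transitively enough on the triple of bilinear forms $\{x_jx_k,\ y_jy_k,\ (x_j-y_j)(x_k-y_k)\}$ and on sign patterns to bring any "two nonzero, same sign" configuration into the normal form $x_j,y_j,x_k,y_k>0$. This is bookkeeping with the four generators \eqref{E:CSymmetry1}–\eqref{E:CSymmetry4}: \eqref{E:CSymmetry3} swaps the first two forms, \eqref{E:CSymmetry4} sends $(\mathbf{x},\mathbf{y})\mapsto(\mathbf{x},\mathbf{x-y})$ and hence permutes the forms, the sign flips \eqref{E:CSymmetry2} adjust signs coordinate-by-coordinate (note they act identically on $x$, $y$ and $x-y$, so they preserve which products are nonzero while letting us fix signs), and \eqref{E:CSymmetry1} handles the choice of $j,k$. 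Once the normal form is reached, the explicit perturbation above finishes the proof; I expect this perturbation argument itself to be routine, with the symmetry reduction being the part that needs care in the write-up.
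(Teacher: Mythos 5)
Your proposal is correct and takes essentially the same approach as the paper: the same perturbation direction $\mathbf{d}=(1,-1,0,\dots,0)$ added simultaneously to $\mathbf{x}$ and $\mathbf{y}$ (your $s=1$ choice is exactly this), and the same reduction of the general case to the all-positive case via the symmetry group $G_n$. One small wrinkle in your intermediate claim: once the two nonzero equal-sign products are arranged to be $x_jx_k$ and $y_jy_k$, the sign-flips \eqref{E:CSymmetry2} and permutations \eqref{E:CSymmetry1} alone cannot always make all four of $x_j,y_j,x_k,y_k$ positive (sign-flips preserve the sign of $x_jy_j$ for each fixed $j$, so the pattern $x_j,x_k>0$, $y_j,y_k<0$ is stuck), and you need to invoke \eqref{E:CSymmetry4} (or \eqref{E:CSymmetry3}) once more; your closing paragraph does bring in all four generators, so the overall bookkeeping is sound.
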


\begin{proof}
Using the symmetry \eqref{E:CSymmetry1} we may suppose that $j=1$, $k=2$. Note that $x_1 < 1$, $x_2 < 1$, since $\| \mathbf{x}\|_1 \leq 1$. The same is true for the corresponding coordinates of $\mathbf{y}$. Let $\mathbf{d} = (1, -1, 0, \ldots, 0)^T \in \RR^n$. It is easy to verify that if $t$ is a real number, and $|t|$ is sufficiently small, then we have 
\[ (\mathbf{x},\mathbf{y}) + t(\mathbf{d},\mathbf{d}) = (\mathbf{x}+t\mathbf{d}, \mathbf{y}+t\mathbf{d}) \in C_n. \] Thus $(\mathbf{x},\mathbf{y})$ lies on a line segment inside $C_n$, and so is not an extreme point of $C_n$.

The more general statement follows by applications of a sequence of symmetries in \eqref{E:CSymmetry1}-\eqref{E:CSymmetry4} to transform $(\mathbf{x},\mathbf{y})$ satisfying the more general assumption into a point $(\mathbf{x'}, \mathbf{y'})$ where the first two coordinates of the vectors $\mathbf{x'}$ and $\mathbf{y'}$ are positive.
\end{proof}

\begin{lemma} \label{L:ExtremePointsProofLem2}
If for a pair $(\mathbf{x},\mathbf{y}) \in C_n$ the vector $\mathbf{x}$ or $\mathbf{y}$ has at least three non-zero coordinates, then $(\mathbf{x},\mathbf{y})$ is not an extreme point of $C_n$. 
\end{lemma}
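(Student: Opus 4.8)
The plan is to argue by contradiction using the same line-segment perturbation technique as in Lemma~\ref{L:ExtremePointsProofLem1}, but now with a cleverly chosen perturbation vector that exploits the freedom coming from three non-zero coordinates rather than two. Suppose $(\mathbf{x},\mathbf{y})\in C_n$ is an extreme point and, say, $\mathbf{x}$ has at least three non-zero coordinates; after applying a permutation \eqref{E:CSymmetry1} and a sign change \eqref{E:CSymmetry2}, we may assume $x_1,x_2,x_3>0$. The idea is to find a perturbation vector $\mathbf{d}=(d_1,d_2,d_3,0,\ldots,0)^T$ supported on these three coordinates so that both $(\mathbf{x}+t\mathbf{d},\mathbf{y}+t\mathbf{d})$ and $(\mathbf{x}+t\mathbf{d},\mathbf{y})$ — or some variant — remain in $C_n$ for all small $|t|$, contradicting extremality.

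First I would reduce the configuration further. By Lemma~\ref{L:ExtremePointsProofLem1}, among the indices $1,2,3$ no two of them can have $y_jy_k$ of the same sign as $x_jx_k$ (which is positive), and similarly no two can have $(x_j-y_j)(x_k-y_k)$ positive together with $x_jx_k$. So for each pair in $\{1,2,3\}$, at most one of $y_j$, $y_j-x_j$ is non-zero with a ``bad'' sign; combined with $x_j>0$ this severely restricts the possible sign patterns of $y_1,y_2,y_3$ and $x_1-y_1,x_2-y_2,x_3-y_3$. Using the full symmetry group $G_n$ — in particular \eqref{E:CSymmetry3} which swaps $\mathbf x\leftrightarrow\mathbf y$ and \eqref{E:CSymmetry4} which sends $\mathbf y\mapsto\mathbf x-\mathbf y$ — I would normalise to a small number of canonical sign patterns for the triple. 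In each surviving case, the three linear functions $t\mapsto\|\mathbf x+t\mathbf d\|_1$, $t\mapsto\|\mathbf y+t\mathbf d\|_1$, $t\mapsto\|\mathbf x-\mathbf y\|_1$ (the last one unchanged since $\mathbf d$ cancels) are, near $t=0$, affine in $t$ on a neighbourhood, because the signs of the relevant coordinates are locally constant. I then need $\mathbf d\neq 0$ making the derivatives of the first two vanish: this is a homogeneous linear system of $2$ equations (one per constraint that is active, i.e.\ equals $1$) in the $3$ unknowns $d_1,d_2,d_3$, which always has a non-trivial solution. For the inactive constraints the inequality is preserved automatically for small $|t|$. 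Hence $(\mathbf x,\mathbf y)$ lies on a genuine segment in $C_n$, contradiction.

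The main obstacle I expect is bookkeeping: one must verify that the constraint $\|\mathbf x+t\mathbf d\|_1$ is truly affine near $t=0$, which requires that $\mathbf d$ be supported exactly where $\mathbf x$ is non-zero so that no coordinate $x_j+td_j$ changes sign — this is fine for $\mathbf x$ since we chose the support inside $\{x_j\neq0\}$, but for $\mathbf y+t\mathbf d$ one must be careful at coordinates where $y_j=0$ but $d_j\neq0$, since there $|y_j+td_j|=|t||d_j|$ is \emph{not} differentiable at $t=0$. This forces either choosing $\mathbf d$ also supported in $\{y_j\neq 0\}$ (possible only if $\mathbf x$ and $\mathbf y$ share three common non-zero coordinates, which by Lemma~\ref{L:ExtremePointsProofLem1} they do not), or instead perturbing $\mathbf x$ alone, i.e.\ considering $(\mathbf x+t\mathbf d,\mathbf y)$, which changes both $\|\mathbf x\|_1$ and $\|\mathbf x-\mathbf y\|_1$ but leaves $\|\mathbf y\|_1$ fixed; then one needs the derivatives of two functions to vanish, again $2$ equations in $3$ unknowns. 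The delicate point is ensuring that in $\|\mathbf x-\mathbf y+t\mathbf d\|_1$ no coordinate $x_j-y_j$ that equals zero gets a non-zero $d_j$; this is exactly where the refined sign analysis from Lemma~\ref{L:ExtremePointsProofLem1} pays off, since it guarantees that on the three-element index set the patterns of zeros of $y_j$ and of $x_j-y_j$ are compatible with finding a $\mathbf d$ avoiding all the non-smooth directions. Once the case analysis is organised — I anticipate two or three genuinely distinct cases up to $G_n$-equivalence — each case is a one-line linear-algebra argument.
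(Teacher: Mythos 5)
Your plan overshoots the target: the sign analysis you begin via Lemma~\ref{L:ExtremePointsProofLem1} is already a complete proof, and the perturbation argument you devote your second paragraph to is never reached. After normalising to $x_1,x_2,x_3>0$, Lemma~\ref{L:ExtremePointsProofLem1} applied to a pair $\{j,k\}\subset\{1,2,3\}$ forbids $y_j,y_k$ from being both positive (then $x_jx_k>0$ and $y_jy_k>0$), and it also forbids them from being both non-positive (then $x_j-y_j>0$ and $x_k-y_k>0$, so $x_jx_k>0$ and $(x_j-y_j)(x_k-y_k)>0$). Hence every pair in $\{1,2,3\}$ would have to split between the two categories ``positive'' and ``non-positive'', which is a proper two-colouring of the complete graph on three vertices — impossible. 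That pigeonhole step is exactly the paper's proof: among $y_1,y_2,y_3$, two are simultaneously positive or simultaneously non-positive; in the first case apply Lemma~\ref{L:ExtremePointsProofLem1} to $(\mathbf x,\mathbf y)$ directly, and in the second apply it to the $G_n$-equivalent pair $(\mathbf x,\mathbf{x-y})$ via the symmetry \eqref{E:CSymmetry4}. There are no ``surviving cases'' for a perturbation vector $\mathbf d$, so the concerns you raise about where $\mathbf d$ may be supported, about non-differentiability of $t\mapsto|y_j+td_j|$ where $y_j=0$, and about solving two homogeneous equations in three unknowns are all moot.

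The gap in the proposal is therefore not a wrong step but a missing termination: you correctly identify the right tool (Lemma~\ref{L:ExtremePointsProofLem1} applied pairwise, with the symmetries of $G_n$) and the right normalisation, but you then presume that some sign patterns survive and sketch, without full details, a perturbation argument for them — an argument you never verify can be carried through. Had you pushed the sign analysis to its conclusion you would have found the case analysis vacuous, as the paper does in two sentences.
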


\begin{proof}
By using symmetries \eqref{E:CSymmetry1}-\eqref{E:CSymmetry3} we may suppose that coordinates $x_1, x_2, x_3$ are non-zero and positive. If two of the coordinates $y_1, y_2, y_3$ are positive, then by Lemma~\ref{L:ExtremePointsProofLem1} we conclude that $(\mathbf{x},\mathbf{y})$ is not an extreme point of $C_n$. In the contrary case, two of the coordinates $y_1, y_2, y_3$ are non-positive. Then again by Lemma~\ref{L:ExtremePointsProofLem1} and the symmetry \eqref{E:CSymmetry4} the pair $(\mathbf{x},\mathbf{x-y}) \in C_n$ is not extreme, and thus neither is $(\mathbf{x},\mathbf{y})$, since these two pairs are $G_n$-equivalent.
\end{proof}

We are ready to prove Proposition~\ref{P:PolytopeExtrPoints}. We denote by $\ell^1_n$ the space $\RR^n$ equipped with the norm $\| \cdot \|_1$ given by \eqref{E:ell1normDef}. Recall that the extreme points of the unit ball $B := \{ \mathbf{x} \in \RR^n : \|\mathbf{x}\|_1 \leq 1\}$ are the vectors with precisely one non-zero coordinate, this coordinate being equal to $\pm 1$. 

\begin{proof}[Proof of Proposition~\ref{P:PolytopeExtrPoints}]
We will split up the proof into three cases, each case corresponding to one of the pairs in the statement of the proposition.

\textbf{Case 1:} At least one of the norms $\|\mathbf{x}\|_1,\|\mathbf{y}\|_1,\|\mathbf{x-y}\|_1$ is strictly less than $1$. We will show that in this case $(\mathbf{x},\mathbf{y})$ is $G_n$-equivalent to the first pair in the statement of the proposition.

By applying a suitable combination of symmetries \eqref{E:CSymmetry1}-\eqref{E:CSymmetry4}, we may suppose that in fact $\|\mathbf{x-y}\|_1<1$. 
We claim that $\mathbf{x}$ must be an extreme point of the unit ball of $\ell^1_n$.
For if not, then it lies at the midpoint of a line segment $I$ such that
$\|\mathbf{x'}\|_1\le 1$ for all $\mathbf{x'}\in I$. Since $\|\mathbf{x-y}\|_1 < 1$, by shrinking $I$ if necessary, we also have $\|\mathbf{x'-y}\|_1<1$ for all $\mathbf{x'}\in I$.
Thus $I\times\{\mathbf{y}\}$ is a line segment in $C_n$ with interior point $(\mathbf{x},\mathbf{y})$, contradicting the fact that $(\mathbf{x},\mathbf{y})$ is extreme.

Likewise, $\mathbf{y}$ is extreme in the unit ball of $\ell^1_n$. 
Applying a suitable symmetry, we may suppose that $x_1=1$ and $y_j=\pm1$ for some $j$, 
all the other entries of $\mathbf{x}$ and $\mathbf{y}$ being $0$. Since we must have $\|\mathbf{x-y}\|_1<1$, this implies that actually
$j=1$ and $y_1=1$. Thus $(\mathbf{x},\mathbf{y})$ is equivalent to the first pair of vectors listed in the statement
of the proposition. This concludes Case~1.

\textbf{Case 2:} We have $\|\mathbf{x}\|_1 = \|\mathbf{y}\|_1 = \|\mathbf{x-y}\|_1 = 1$, and one of the vectors $\mathbf{x}$, $\mathbf{y}$ or $\mathbf{x-y}$ has only one non-zero coordinate. In this case, $(\mathbf{x},\mathbf{y})$ will be now shown to be $G_n$-equivalent to the second pair in the statement of the proposition.

Using our symmetries, we may suppose that $\mathbf{x} = (1, 0, \ldots, 0)^T$. Note that \[ \|\mathbf{x-y}\|_1 = |1 - y_1| + |y_2| + \ldots + |y_n| = 1\] and
\[ \|\mathbf{y} \|_1 = |y_1| + |y_2| + \ldots + |y_n| = 1\] force \[|1-y_1| = |y_1|,\] the unique real solution $y_1$ to this equation being $y_1 = 1/2$. By Lemma~\ref{L:ExtremePointsProofLem2}, $\mathbf{y}$ has only one other non-zero coordinate, and $\|\mathbf{y}\|_1 = 1$ forces this coordinate to be equal to $\pm 1/2$. Applying symmetries \eqref{E:CSymmetry1} and \eqref{E:CSymmetry2} we conclude that $(\mathbf{x},\mathbf{y})$ is $G_n$-equivalent to the second pair in the statement. This concludes Case~2.

\textbf{Case 3:} We have $\|\mathbf{x}\|_1 = \|\mathbf{y}\|_1 = \|\mathbf{x-y}\|_1 = 1$, and all of the vectors $\mathbf{x}$, $\mathbf{y}$ and $\mathbf{x-y}$ have exactly two non-zero coordinates. We will show that $(\mathbf{x},\mathbf{y})$ is $G_n$-equivalent to the third pair in the statement of the proposition.

This case is slightly more complicated than the previous two. As before, we may suppose that $x_1 > 0$ and $x_2 > 0$. We claim that $y_1$ and $y_2$ cannot both be equal to zero. If they were, then $\mathbf{x-y}$ has four non-zero coordinates, contrary to the assumption. In fact, precisely one of $y_1$ and $y_2$ must be non-zero. If both were non-zero, then since $\mathbf{x-y}$ has exactly two non-zero coordinates, we would have $x_1 - y_1 \neq 0$ and $x_2 - y_2 \neq 0$. Then the three quantities $x_1x_2$, $y_1y_2$ and $(x_1-y_1)(x_2 - y_2)$ would be non-zero, and Lemma~\ref{L:ExtremePointsProofLem1} would imply that $(\mathbf{x},\mathbf{y})$ is not an extreme point. 

By an application of symmetries we may, in addition to $x_1 > 0$ and $x_2 > 0$, suppose that $y_1 \neq 0$, $y_2 = 0$ and $y_3 = s > 0$. Since $x_1 + x_2 = 1$, we have $x_1 = t$, $x_2 = 1-t$ for some $t \in (0, 1)$. Our vectors thus have the following structure:

\[
x=
\begin{pmatrix}
t\\1-t\\0\\0\\ \vdots\\0
\end{pmatrix}, \quad
y=
\begin{pmatrix}
y_1 \\ 0 \\ s \\ 0\\\vdots\\0
\end{pmatrix}, \quad
x-y=
\begin{pmatrix}
t - y_1\\ 1-t \\-s\\0\\\vdots\\0
\end{pmatrix}.
\]

Recall that $\mathbf{x-y}$ has only two non-zero coordinates. Since $1-t \neq 0$ and $s \neq 0$, we conclude from the above that $t = y_1$. But then $\|\mathbf{x-y}\|_1 = 1-t + s = 1$, and so $t = s$. Finally, $1 = \|\mathbf{y}\|_1 = t + s = 2s$ shows that $s = t = 1/2$, and so $(\mathbf{x},\mathbf{y})$ is $G_n$-equivalent to the third pair in the statement of the proposition.
\end{proof}

\section{Proof of Theorem 1}
\label{S:ProofT1}

In addition to Theorem~\ref{T:3MT} from Section~\ref{S:3MSection}, we will also need some facts from plane geometry in order to prove Theorem~\ref{T:MainTheorem1}. In particular, we will need to discuss the \textit{minimum enclosing disk problem} appearing in computational geometry.

\subsection{Minimal enclosing disk}

Let $K$ be a compact subset of $\mathbb{C}$ containing at least two points. Among all closed disks which contain $K$ there exists a unique one of minimal radius. We will denote this disk by $\DD_K$ and call it the \textit{minimal disk} for $K$. The radius of $\DD_K$ will be denoted by $R(K)$. 
  
If $\DD_K$ is minimal for $K$, then the intersection $K \cap \partial \DD_K$ must obviously be non-empty. In fact, this intersection must contain at least two points, and there is also a restriction on the locations of the points in $K \cap \partial \DD_K$.

\begin{lemma} \label{L:arcLemma}
Let $K$ be a compact subset of $\mathbb{C}$ which contains at least two points. Then the intersection  $\partial \DD_K \cap K$ is not contained in any arc of $\partial \DD_K$ which has length strictly smaller than half of the circumference of $ \DD_K$. In particular, if $K \cap \partial \DD_K = \{a,b\}$ is a two-point set, then $a$ and $b$ are antipodal on $\partial \DD_K$
\end{lemma}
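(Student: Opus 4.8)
The plan is to prove the first assertion by contradiction and then read off the ``in particular'' statement for free. So suppose $E := \partial\DD_K \cap K$ were contained in some arc of $\partial\DD_K$ of length strictly less than half the circumference of $\DD_K$. Since $E$ is compact, it is then contained in a \emph{closed} arc subtending a central angle $2w < \pi$ at the center $m$ of $\DD_K$; let $v$ be the unit vector at $m$ pointing toward the angular midpoint of that arc. I will build a closed disk of radius strictly less than $R(K)$ that still contains $K$, contradicting the minimality of $\DD_K$ recorded just above the lemma.

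Step one is to quantify how far $E$ sits from the \emph{equator} perpendicular to $v$. For $\sigma \in E$ one has $\langle \sigma - m, v\rangle = R(K)\cos\theta$ with $|\theta| \le w$, hence $\langle \sigma - m, v\rangle \ge R(K)\cos w =: \delta > 0$. Therefore $E \subseteq U$, where $U := \{\sigma \in K : \langle \sigma - m, v\rangle > \delta/2\}$ is relatively open in $K$. Its complement $K\setminus U$ is compact and disjoint from $E = K\cap\partial\DD_K$, so $\sup_{\sigma\in K\setminus U}|\sigma-m| = R(K)-\eta$ for some $\eta>0$. This compactness step, separating $E$ from the rest of $K$, is the only place needing a little care; everything else is a routine computation.

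Step two is to translate the center slightly along $v$: set $m' := m+\epsilon v$. For $\sigma\in U$,
\[ |\sigma-m'|^2 = |\sigma-m|^2 - 2\epsilon\langle\sigma-m,v\rangle + \epsilon^2 \le R(K)^2 - \epsilon\delta + \epsilon^2 < R(K)^2 \quad\text{whenever } 0<\epsilon<\delta, \]
while for $\sigma\in K\setminus U$ the triangle inequality gives $|\sigma-m'| \le |\sigma-m|+\epsilon \le R(K)-\eta+\epsilon < R(K)$ whenever $\epsilon<\eta$. Choosing $0<\epsilon<\min\{\delta,\eta\}$ we obtain $|\sigma-m'|<R(K)$ for every $\sigma\in K$, and by compactness of $K$ even $\sup_{\sigma\in K}|\sigma-m'|<R(K)$. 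Thus $K$ lies in a closed disk about $m'$ of radius strictly less than $R(K)$, the sought contradiction.

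For the final clause: a one-point set, and any non-antipodal pair $\{a,b\}$ (take the shorter of the two arcs joining $a$ and $b$), are each contained in an arc of $\partial\DD_K$ of length strictly less than half the circumference. By the part just proved, $E=\partial\DD_K\cap K$ is not of this form, so $E$ has at least two points, and if $E=\{a,b\}$ then $a$ and $b$ are antipodal on $\partial\DD_K$.
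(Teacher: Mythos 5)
Your proof is correct and follows essentially the same strategy as the paper's: assume the intersection lies on a short arc, separate it from the rest of $K$ by a compactness argument, then translate the center toward the arc and shrink the radius to contradict minimality of $\DD_K$. The only difference is presentational — the paper first normalizes $\DD_K$ to be the unit disk and the arc to sit in the half-plane $\{\Re z > \delta\}$, whereas you work coordinate-free with the inner product against $v$ and carry out the translate-and-shrink estimate explicitly — but the underlying argument is the same.
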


\begin{proof}
Seeking a contradiction, assume that $\partial \DD_K \cap K$ is contained in an arc of length strictly less than half of the circumference of $\DD_K$. By translation, rescaling and rotation of the setting, we may assume that $\DD_K$ is the unit disk, and that $\partial \DD_K \cap K$ is contained in some half-space \[\{ z \in \mathbb{C} : \Re z > \delta \}, \quad \delta > 0.\] By compactness, the distance between the compact sets $K$ and $\partial \DD_K \cap \{ z \in \mathbb{C} : \Re z \leq \delta/2 \}$ is positive. It follows that we may translate the disk $\DD_K$ in the positive direction of the real axis, and then shrink the radius of the translated disk slightly, and the resulting disk will still contain $K$, yet be of strictly smaller radius than $R_K$. See Figure \ref{fig:picture6}. This contradiction establishes Lemma~\ref{L:arcLemma}.
\end{proof}

\begin{figure}
    \centering    
\begin{tikzpicture}[scale=1.5]
    \filldraw[gray!20] (0.3, 0) circle (1.9);
    \draw[dashed] (0, 0) circle (2);
    \draw [black,ultra thick,domain=-45:45] plot ({2*cos(\x)}, {2*sin(\x)});
\end{tikzpicture}
     \caption{The initial disk $\DD_K$ is the dashed circle, and we assume that $\partial \DD_K \cap K$ is contained in the black thick arc. Then $K$ will be contained in the grey disk which is obtained from $\DD_K$ by first translating $\DD_K$ in the direction of the positive real axis, and then slightly shrinking the translated disk. This contradicts the minimality of $\DD_K$.}
    \label{fig:picture6}
\end{figure}
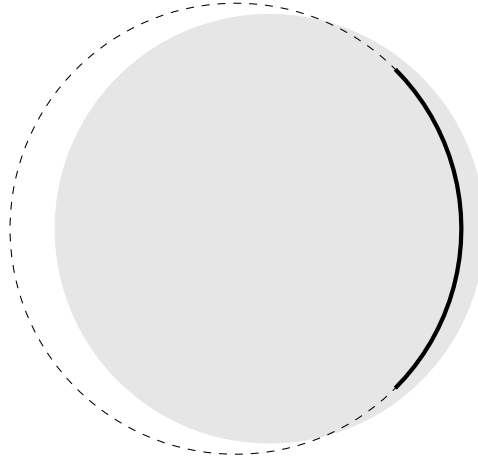

\begin{lemma} \label{L:threePointCor} Let $T = \{a, b, c\}$ be a three-point set. If $D$ is a closed disk for which $T \subset \partial D$, and $T$ is not contained in any arc of $\partial D$ which is strictly smaller than half of the circumference of $D$, then $D = \DD_T$.
\end{lemma}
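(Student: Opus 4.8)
The plan is to show that $D$ must be the minimal disk $\DD_T$ by proving $D$ has the smallest possible radius among disks containing $T$, and then invoking uniqueness of the minimal disk. First I would observe that since $T \subset \partial D \subset D$, the disk $D$ is one of the closed disks containing $T$, so $R(T) \leq \mathrm{radius}(D)$. The content is the reverse inequality. Here I would argue by contradiction: suppose $\mathrm{radius}(D) > R(T)$, so that $D \neq \DD_T$. Then $\DD_T$ is a strictly smaller disk containing $T$, and in particular $T \subset \DD_T \cap D$.

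The key geometric step is to analyze how two distinct disks can intersect. Two distinct closed disks $D$ and $\DD_T$ with $\DD_T$ of strictly smaller radius: their intersection $D \cap \DD_T$ is a "lens" region, and $\partial D \cap \DD_T$ is an arc of $\partial D$. The crucial claim is that this arc is strictly contained in an open half-circle of $\partial D$ — that is, it has length strictly less than half the circumference of $D$. Intuitively, since $\DD_T$ is strictly smaller, the chord cut off on $\partial D$ by the circle $\partial \DD_T$ subtends an angle strictly less than $\pi$ at the center of $D$ (a circle of strictly smaller radius cannot cut a half-or-larger arc out of a bigger circle). Since $T \subset \partial D$ and $T \subset \DD_T$, we get $T \subset \partial D \cap \DD_T$, so $T$ lies in an arc of $\partial D$ of length strictly less than half the circumference — contradicting the hypothesis on $D$.

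I would make the "strictly smaller disk cuts a sub-half arc" claim precise by a direct computation: place $D$ as the unit disk centered at the origin, and let $\DD_T$ have center $w$ and radius $r < 1$. A point $e^{i\theta} \in \partial D$ lies in $\DD_T$ iff $|e^{i\theta} - w| \leq r$. If the set of such $\theta$ had length $\geq \pi$, one could find two antipodal points $\pm e^{i\theta_0}$ both lying in $\DD_T$ (or a limiting argument via closedness), forcing $2 = |e^{i\theta_0} - (-e^{i\theta_0})| \leq |e^{i\theta_0} - w| + |w - (-e^{i\theta_0})| \leq 2r < 2$, a contradiction. A small amount of care is needed when $\partial D \cap \DD_T$ is a single point or when the arc is exactly a half, but the strict inequality $r < 1$ rules out the boundary case cleanly, and the one-point case is even easier since then $T$ is a single point, contradicting $\#T = 3$.

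The main obstacle, such as it is, is getting the antipodal-points argument to handle closed arcs and endpoints rigorously rather than hand-waving about "the arc cut off." An alternative cleaner route avoiding this: directly invoke Lemma~\ref{L:arcLemma}. Since $T \subset \partial \DD_T \cap \DD_T$... wait, that is not quite what Lemma~\ref{L:arcLemma} gives. Actually the cleanest approach may be: if $D \neq \DD_T$, note $T \subseteq \partial D \cap D_T$; since $D_T$ is a strictly smaller closed disk meeting $\partial D$, the set $\partial D \cap D_T$ is contained in a closed arc of $\partial D$ of length $< \pi \cdot \mathrm{radius}(D)$ (half-circumference is $\pi R$), which I would prove by the two-point triangle-inequality computation above; this contradicts the hypothesis. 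So I expect the argument to be short, with the only real work being the elementary planar lemma that a disk of strictly smaller radius intersects a given circle in a strictly-less-than-semicircular arc, which I would state and prove inline or fold into the contradiction.
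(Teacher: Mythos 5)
Your proof is correct, but it takes a genuinely different route from the paper's. The paper begins by noting that, since $\partial D$ is the unique circle through the three distinct points while $\partial\DD_T$ is a different circle, exactly two of $a,b,c$ can lie on $\partial\DD_T$; it then invokes Lemma~\ref{L:arcLemma} to conclude those two must be antipodal on $\partial\DD_T$, normalizes so that $\DD_T$ is the unit disk with the antipodal pair at $\pm i$ and $c$ in the closed right half-disk with $|c|<1$, and finally argues (after observing that the center of $\partial D$ must lie on the negative real axis) that the arc of $\partial D$ through $T$ is trapped in $\{\Re z\ge 0,\ |z|\le 1\}$ and hence has length $<\pi$, shorter than half the circumference of $D$. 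Your argument bypasses Lemma~\ref{L:arcLemma} entirely and instead proves the self-contained planar fact that a closed disk of radius $r<R$ meets a circle of radius $R$ in an arc of length strictly less than $\pi R$, via the triangle inequality through the smaller disk's center applied to an antipodal pair; since $T\subset\partial D\cap\DD_T$, this immediately contradicts the hypothesis. Your route is more direct and avoids both the normalization and the reuse of Lemma~\ref{L:arcLemma}; the paper's route trades self-containment for leverage on a lemma already in hand. The boundary case you flag (arc of length exactly $\pi$) is indeed harmless: the intersection $\partial D\cap\DD_T$ is a closed arc, so if its length were $\ge\pi$ it would contain a closed subarc of length exactly $\pi$ whose endpoints form an antipodal pair both lying in $\DD_T$, giving $2\le 2r<2$.
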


\begin{proof}
Assume, seeking a contradiction, that $D \neq \DD_T$, and so that $R(T)$ is strictly smaller than the radius of $D$. Since $\partial D$ is the unique circle passing through the three points $a,b,c$, we must have that $T \cap \partial \DD_T$ contains precisely two points. Say $a, b \in \partial \DD_T$ but $c \not\in \partial \DD_T$. Lemma~\ref{L:arcLemma} implies that $a$ and $b$ are antipodal on $\DD_T$. By translation, rescaling and rotation, we may assume that $\DD_T$ is the unit disk, $a = i, b = -i$, $c$ has non-negative real part and $|c| < 1$. After these operations, we have that $R(T) = 1$ and the circumference of $D$ is larger than $2 \pi$. Thus by hypothesis, surely $T$ is not contained in any arc of $\partial D$ of length strictly smaller than $\pi$. But the shorter of the arcs of $\partial D$ which contains $T$ is then contained in $\{ z \in \mathbb{C} : 0 \leq \Re z, |z| \leq 1\}$, and so this arc must have a length smaller than $\pi$. This is a contradiction, and the lemma follows.
\end{proof}

\subsection{Reduction to three-point sets}

The following simple result on minimal disks makes it possible to apply Theorem~\ref{T:3MT} to more than three measures.

\begin{lemma} \label{L:3PointSupportingSubsetLemma}
Let $K$ be a compact subset of $\CC$ containing at least two points. There exists a subset $T \subset K$ which contains at most three points and for which $\DD_K = \DD_T$. In particular, $R(K) = R(T)$. 
\end{lemma}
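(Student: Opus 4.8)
The plan is to invoke Helly's theorem in the plane, together with the basic structure of the minimal enclosing disk established in Lemma~\ref{L:arcLemma}. First I would recall the characterization: a closed disk $D$ of radius $r$ contains $K$ if and only if $K$ is contained in the intersection $\bigcap_{p \in K} \overline{B}(p,r)$, so the minimal radius $R(K)$ is the smallest $r$ for which the family of closed balls $\{\overline{B}(p,r)\}_{p\in K}$ has nonempty intersection. These balls are convex subsets of the plane $\RR^2$, so by Helly's theorem the family has a common point precisely when every three of its members do. Applying this at $r = R(K)$ — where by definition the full intersection is nonempty but it would become empty for any smaller radius — a compactness/continuity argument lets me extract three points $p_1, p_2, p_3 \in K$ (possibly with repetitions, whence "at most three") such that $R(\{p_1,p_2,p_3\}) = R(K)$.

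More concretely, here is the sequence of steps. Step one: for $r > 0$ set $F(r) := \bigcap_{p \in K} \overline{B}(p, r)$, a nested decreasing family of compact convex sets, and observe $R(K) = \min\{ r : F(r) \neq \emptyset\}$ with the center of $\DD_K$ lying in $F(R(K))$. Step two: fix $r < R(K)$; then $F(r) = \emptyset$, so by Helly's theorem in $\RR^2$ there exist three points (depending on $r$) whose corresponding balls of radius $r$ have empty intersection. Step three: let $r \uparrow R(K)$ along a sequence $r_n$; by compactness of $K$ we may pass to a subsequence so that the three chosen points converge to $p_1, p_2, p_3 \in K$. For the limiting triple $T := \{p_1, p_2, p_3\}$ one checks that $R(T) \geq R(K)$ (since $R(T) < R(K)$ would make the three balls of some radius $r_n < R(T)$ have a common point, contradicting the choice) while trivially $R(T) \leq R(K)$ because $T \subseteq K$. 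Hence $R(T) = R(K)$. Step four: since $\DD_T$ is a disk of radius $R(K) = R(T)$ containing $T$, and $\DD_K$ is the unique disk of radius $R(K)$ containing $K \supseteq T$, minimality and uniqueness force $\DD_T = \DD_K$; concretely, any disk of radius $R(K)$ containing $K$ also contains $T$, and a disk of that minimal radius containing $T$ is unique, so the two coincide.

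I should also handle the degenerate bookkeeping: if $T$ as produced has fewer than three distinct points, that is fine, the statement only claims "at most three." An alternative, more hands-on route avoids Helly entirely and uses Lemma~\ref{L:arcLemma} directly: the intersection $\partial\DD_K \cap K$ is not contained in any open half-circle, and a classical fact from computational geometry (Welzl's observation) says that from any finite such configuration one can select two antipodal points, or three points not lying in a common open half-circle, that already determine the same minimal disk; one then passes from $K$ to a suitable finite subset by compactness and applies this. The main obstacle in either approach is the limiting argument: ensuring that as $r \uparrow R(K)$ the extracted triples converge to points genuinely in $K$ and that the limiting triple still "witnesses" the radius $R(K)$ rather than some strictly smaller value. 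The Helly-based argument handles this cleanly because the condition "$F(r) = \emptyset$" is what survives the limit, so I expect to present that version, with Lemma~\ref{L:arcLemma} available as a sanity check on the structure of $\partial \DD_T \cap T$.
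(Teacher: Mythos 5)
Your proposal is correct, and it takes a genuinely different route from the paper. The paper argues directly with circular arcs: it appeals to Lemma~\ref{L:arcLemma} to conclude that $K \cap \partial\DD_K$ is not confined to any short arc, takes the shortest closed arc $J$ containing $K \cap \partial\DD_K$ with endpoints $a, b \in K$, locates a third point $c \in K$ in the short arc between the antipodes $\tilde a, \tilde b$, and then verifies $\DD_{\{a,b,c\}} = \DD_K$ via Lemma~\ref{L:threePointCor}. You instead recast the problem through the characterization $F(r) := \bigcap_{p \in K}\overline{B}(p,r) \neq \emptyset$ iff some disk of radius $r$ covers $K$, invoke Helly's theorem to extract empty triple intersections at each subcritical radius $r_n < R(K)$, and pass to the limit by compactness of $K$. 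Both proofs are sound. Your approach is more mechanical and, notably, generalizes verbatim to $\RR^d$ with $d+1$ points; the paper's approach is more elementary and self-contained (no Helly), and as a by-product gives some structural information about where on $\partial\DD_K$ the supporting points live, which is not visible from the Helly argument. Two small points of care in your writeup: (i) in Step 3, the phrasing ``the three balls of some radius $r_n < R(T)$ have a common point'' should read ``the balls $\overline{B}(p_i^n, r_n)$ would have a common point for large $n$,'' since the contradiction comes from the center of $\DD_T$ lying in each $\overline{B}(p_i^n, r_n)$ once $r_n$ exceeds $R(T)$ by more than the perturbation $\max_i|p_i^n - p_i|$; and (ii) in Step 4 you should spell out that a disk of radius $R(T)$ containing $T$ is automatically the minimal one — the standard lens argument shows that two distinct disks of equal radius $r$ covering $T$ would trap $T$ in a disk of radius strictly less than $r$, contradicting minimality — which then yields $\DD_T = \DD_K$. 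Neither gap is serious; the argument goes through.
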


It may be convenient to refer to Figure~\ref{fig:fig4} during the reading of the proof.

\begin{proof} 
If there are two points in $K$ which are antipodal on $\partial \DD_K$, then we take $T$ to consist of those two points. Clearly $\DD_K = \DD_T$. 
In the case that no pair of antipodal points of $\partial \DD_K$ are contained in $K$, let $J$ be the shortest closed arc of $\partial \DD_K$ which contains $K$, and let $a,b \in J \cap K$ be the end-points of $J$. By Lemma~\ref{L:arcLemma}, the length of $J$ is strictly larger than half of the circumference of $\partial \DD_K$, and so $J$ is the longer of the two arcs between $a$ and $b$. Let $\tilde{a}$ and $\tilde{b}$ be points on $\partial \DD_K$ which are antipodal to $a$ and $b$, respectively. By assumption, $\tilde{a} \not\in K, \tilde{b} \not\in K$. We claim that the shorter of the two open arcs between $\tilde{a}$ and $\tilde{b}$ must contain points of $K$. If not, then the longer of the two arcs between $\tilde{a}$ and $\tilde{b}$ would contain $K$ in its interior, and this arc has the same length as $J$. A routine compactness argument would lead to a contradiction to the minimality of $J$. 

Let $T = \{a, b, c\}$, where $c \in K$ is any point contained in the shorter open arc between $\tilde{a}$ and $\tilde{b}$. Note that any arc containing $T$ must contain either $\tilde{a}$ or $\tilde{b}$. Then such an arc contains two antipodal points on $\DD_K$, and so it has a length which is at least half of the circumference of $\DD_K$. By Lemma~\ref{L:threePointCor} we conclude that $\DD_K = \DD_T$. 
\end{proof}

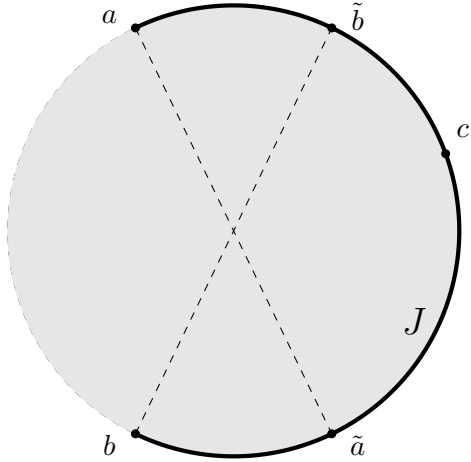
\begin{figure}
\centering
\begin{tikzpicture}[scale=3]
    \draw[dashed] (0,0) circle (1);
    \filldraw[gray!20] (0, 0) circle (1);
    
    \node at (-0.55, 0.95) {$a$};
    \filldraw (-0.435, 0.9) circle (0.5pt);  
    \node at (0.55, -0.95) {$\tilde{a}$};
    \filldraw (0.435, -0.9) circle (0.5pt);     
    \draw[dashed] (-0.435, 0.9) -- (0.435, -0.9);
    
    \node at (-0.55, -0.95) {$b$};
    \filldraw (-0.435, -0.9) circle (0.5pt);     
    \node at (0.55, 0.95) {$\tilde{b}$};
    \filldraw (0.435, 0.9) circle (0.5pt);     
    \draw[dashed] (-0.435, -0.9) -- (0.435, 0.9);
    1
    \node at ({cos(20)+0.075}, {sin(20)+0.1}) {$c$};
    \filldraw ({cos(20)}, {sin(20)}) circle (0.5pt);  
    
    \draw [black,ultra thick,domain=-115:115, samples=1000] plot ({cos(\x)}, {sin(\x)});
    \node at (0.8, -0.4) {\Large $J$};
\end{tikzpicture}

\caption{The thick arc $J$ between $a$ and $b$ is the smallest containing the compact set $K$. It follows that the shorter arc between the antipodal points $\tilde{a}$ an $\tilde{b}$ must contain points of $K$.}
\label{fig:fig4}
\end{figure}

\subsection{Finalizing the proof}
We are finally ready to give a proof of the equality $c_\RR(\Omega) = c_\CC(\Omega)$.
\begin{proof}[Proof of Theorem~\ref{T:MainTheorem1}] Since $c_\RR(\Omega) \leq c_\CC(\Omega)$, it will suffice to show the reverse inequality. To this end, we need to show that given $f \in C(\partial \Omega)$ satisfying $\|f \|_{\partial \Omega} \leq 1$, we have that $\|K_\Omega f + \CC \mathbf{1}\|_{\partial \Omega} \leq c_\RR(\Omega)$. Since $K_\Omega f$ is continuous, the image $K = K_\Omega f(\partial \Omega)$ is a compact subset of $\CC$. If $K$ consists of a single point, then $\|K_\Omega f + \CC \mathbf{1}\|_{\partial \Omega} = 0$, and the proof is complete. In other case, let $\DD_K$ be the minimal disk for $K$. We use Lemma~\ref{L:3PointSupportingSubsetLemma} to obtain a three-point set $T = \{a,b,c\} \subset K$ for which $R(T) = R(K)$ (note that if $K \cap \partial \DD_K$ contains only two points $\{a,b\}$, then we may pick $c \in K$ arbitrarily to complete $T$ to a three-point set). The geometric interpretation of the quotient norm in $C(\partial \Omega)/\CC \mathbf{1}$ implies that $\|K_\Omega f + \CC \mathbf{1}\|_{\partial \Omega} = R(K) = R(T)$. Since $T$ is contained in the image of $K_\Omega f$, there exists $\zeta_1, \zeta_2, \zeta_3 \in \partial \Omega$ such that
\[ (a,b,c) = \bigl( K_\Omega f(\zeta_1) , K_\Omega f(\zeta_2), K_\Omega f(\zeta_3)\bigr).\] Since $K_\Omega f(\zeta_j) = \int_{\partial \Omega} f \, d\mu_{\zeta_j}$, we may apply Theorem~\ref{T:3MT} to $X = \partial \Omega$, $\mu_j = \mu_{\zeta_j}$ for $j = 1,2,3$, and conclude that the operator $\LL: C(\partial \Omega) \to \CC^3 / \CC \mathbf{1}$ defined by
\[ \LL : f \mapsto \bigl( K_\Omega f(\zeta_1) , K_\Omega f(\zeta_2), K_\Omega f(\zeta_3)\bigr) + \CC \mathbf{1}\] has a norm satisfying the bound \eqref{E:3MTEstimate}. With $\| \cdot \|_\infty$ denoting the norm on $\CC^3 / \CC \mathbf{1}$ given in \eqref{E:C3supnorm}, we obtain
\begin{align*}
\|K_\Omega f + \CC \mathbf{1}\|_{\partial \Omega} = R(T) &= \|(a,b,c) + \CC \mathbf{1}\|_\infty \\
&= \|\LL f \|_{\infty} \\
&\leq \|\LL\|_{C(\partial \Omega) \to \CC^3 / \CC \mathbf{1}} \\
&\leq \frac{1}{2} \max_{j,k} \|\mu_{\zeta_j} - \mu_{\zeta_k}\| \\
&\leq \frac{1}{2} \sup_{\zeta, \zeta' \in \partial \Omega} \|\mu_\zeta - \mu_{\zeta'}\|\\
& = c_\RR(\Omega).
\end{align*}
\end{proof}

The earlier mentioned extension of Theorem~\ref{T:3MT} to an \textit{n-measures theorem} is obtained by employing the same argument as in the above proof. The normed space $\CC^n / \CC \mathbf{1}$ appearing below is defined analogously to the case $n=3$ treated in Section~\ref{S:Section2Definitions}.

\begin{theorem} \label{T:NMT} Let $C(X)$ be the space of continuous functions on a compact Hausdorff space $X$, $n \geq 3$ an integer, and $\LL: C(X) \to \CC^n / \CC \mathbf{1}$ the operator defined by 
\[
\LL f = \big( \mu_1(f), \ldots, \mu_n(f) \big) + \CC \mathbf{1}
\]
where $\mu_1, \ldots, \mu_n$ are finite real-valued Borel measures on $X$. Then
\[ \| \LL \|_{C(X) \to \CC^n / \CC \mathbf{1}} = \frac{1}{2} \max_{j,k} \|\mu_j - \mu_k\|.\]
\end{theorem}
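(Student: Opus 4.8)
\textbf{Proof proposal for Theorem~\ref{T:NMT}.}

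The plan is to mimic the argument used to derive Theorem~\ref{T:MainTheorem1} from Theorem~\ref{T:3MT}, with the three-measures result replaced by an appeal to itself for triples of the $\mu_j$. The lower bound "$\geq$" is immediate for the same reason as in the $n=3$ case: for any fixed pair $j,k$ and any $f\in C(X)$ with $\|f\|_X=1$, the coset $\LL f$ has $\|\cdot\|_\infty$-norm at least $\tfrac12|\mu_j(f)-\mu_k(f)|$ (picking $\lambda$ to be the average of the $j$th and $k$th coordinates), and taking the supremum over such $f$ gives $\|\LL\|\geq\tfrac12\|\mu_j-\mu_k\|$; maximizing over $j,k$ finishes this direction.

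For the upper bound, fix $f\in C(X)$ with $\|f\|_X\le 1$ and consider the finite point set $K:=\{\mu_1(f),\ldots,\mu_n(f)\}\subset\CC$. If $K$ is a single point the coset norm is $0$ and there is nothing to prove, so assume $K$ has at least two points. The quotient norm $\|\LL f\|_\infty$ equals the radius $R(K)$ of the minimal enclosing disk $\DD_K$. By Lemma~\ref{L:3PointSupportingSubsetLemma} there is a subset $T\subset K$ with at most three points and $R(T)=R(K)$; write $T=\{\mu_{j_1}(f),\mu_{j_2}(f),\mu_{j_3}(f)\}$ for suitable indices (repeating an index if $|T|<3$, which is harmless). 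Applying Theorem~\ref{T:3MT} to the three measures $\mu_{j_1},\mu_{j_2},\mu_{j_3}$ and to the same $f$, we get
\[
R(K)=R(T)\le \|\LL_3\|_{C(X)\to\CC^3/\CC\mathbf1}\cdot\|f\|_X\le \frac12\max_{p,q\in\{j_1,j_2,j_3\}}\|\mu_p-\mu_q\|\le \frac12\max_{p,q}\|\mu_p-\mu_q\|,
\]
where $\LL_3$ is the three-functional operator built from $\mu_{j_1},\mu_{j_2},\mu_{j_3}$. Since $f$ was arbitrary in the unit ball, $\|\LL\|\le\tfrac12\max_{p,q}\|\mu_p-\mu_q\|$, which is the desired inequality. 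Combining the two bounds gives the claimed equality.

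I do not expect any genuine obstacle here: the only subtlety is the bookkeeping when $|T|<3$, handled by allowing a repeated index (in which case the corresponding "three-measures" instance degenerates but Theorem~\ref{T:3MT} still applies), and the observation that the maximum over the selected triple of indices is bounded by the maximum over all pairs. Everything else is a transcription of the $n=3$ finalization argument, using that Lemma~\ref{L:3PointSupportingSubsetLemma} is stated for arbitrary compact $K\subset\CC$ and therefore applies verbatim to the $n$-point set.
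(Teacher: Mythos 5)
Your proposal is correct and follows exactly the paper's route: reduce the $n$-point image set to a (at most) three-point subset via Lemma~\ref{L:3PointSupportingSubsetLemma} and invoke Theorem~\ref{T:3MT}, with the lower bound being immediate from the functional-norm definition. The paper's stated proof is essentially a one-line version of yours.
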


\begin{proof}
We use Lemma~\ref{L:3PointSupportingSubsetLemma} to pick a three-point subset $T$ of $K = \{ \mu_j(f) \}_{j=1}^n$ for which we have $R(K) = R(T)$, and apply Theorem~\ref{T:3MT} as in the preceeding proof.
\end{proof}

\section{Proof of Theorem 2}
\label{S:ProofT2}

\subsection{Exploiting subsequences} We will argue by contradiction in order to prove Theorem~\ref{T:Maintheorem2}. That is, we will assume that there exists a convex domain $\Omega$ with $a(\Omega) = 1$, and so that there exists a sequence of functions $(f_n)$ in $\A(\Omega)$ which satisfy 
\[ \|f_n + \CC \mathbf{1}\|_{\Omega} = 1
\]
and
\begin{equation} \label{E:Thm2ProofInitAssmpt}
\lim_{n \to \infty} \| K_\Omega f_n + \CC \mathbf{1}\|_{\Omega} = 1.
\end{equation}
We shall see that this leads to a contradiction. The proof technique below is different from the one employed by Schober in \cite{Sch68} in his proof of Neumann's lemma, and analyticity is used only at the very end of the proof. In fact, we shall remark at the end of the section how our arguments lead to a new proof of Neumann's lemma which is different from the one in \cite{Sch68}.

Thus, for now, we assume merely that $f_n \in C(\partial \Omega)$, and we will derive certain consequences of \eqref{E:Thm2ProofInitAssmpt}. In the course of the proof we shall replace the sequence $(f_n)$ by a subsequence multiple times, and for convenience we will not be changing the subscripts. We may suppose that $\|f_n\|_\Omega = 1$, and consequently that the images
\[ K_\Omega f_n(\partial \Omega) := \{ K_\Omega f_n(\zeta) : \zeta \in \partial \Omega \}
\] are contained in a closed disk of radius $1$ centred at the origin. For large $n$, this observation and \eqref{E:Thm2ProofInitAssmpt} forces there to be points of the image of $K_\Omega f_n$ outside of any disk centred at the origin of radius strictly less than $1$. By exchanging $f_n$ for a unimodular multiple of itself, we may thus assume that there exists a sequence of points $(\zeta_n)$ in $\partial \Omega$ for which we have 
\begin{equation}\label{E:Thm2ProofEq1}
\lim_{n \to \infty} K_\Omega f_n(\zeta_n) = \lim_{n \to \infty} \int_{\partial \Omega} f_n \, d\mu_{\zeta_n} = 1.
\end{equation}
Using that the functions $f_n$ are bounded by $1$ in modulus, and the positive measures $d\mu_\zeta$ are of unit mass, we obtain 
\begin{align*}
\lim_{n \to \infty} \int_{\partial \Omega} |f_n - 1|^2 d\mu_{\zeta_n} &= \lim_{n \to \infty} \int_{\partial \Omega} \big(|f_n|^2 - 2 \Re f_n + 1 \big) d\mu_{\zeta_n} \\
&\leq \lim_{n \to \infty} \Big( 2 - 2\Re \int_{\partial \Omega} f_n \, d\mu_{\zeta_n} \Big) = 0. 
\end{align*} Recall from \eqref{E:MuZetaEq} that $\rho_{\zeta_n}$ denotes the $ds$-absolutely continuous part of $\mu_{\zeta_n}$. The above computation implies that
\begin{equation} \label{E:Thm2ProofEq2}
\lim_{n \to \infty} \int_{\partial \Omega} |f_n - 1|^2 \rho_{\zeta_n}ds = 0.
\end{equation}
Compactness of the boundary $\partial \Omega$ implies that we may assume convergence of the sequence $(\zeta_n)$ to some points $\zeta \in \partial \Omega$. The following lemma shows that we may replace in \eqref{E:Thm2ProofEq2} the densities $\rho_{\zeta_n}$ with the density $\rho_{\zeta}$.

\begin{lemma} \label{L:Thm2ProofLemma1} With notations as above, we have 
\begin{equation} \label{E:Thm2ProofEq3}
\lim_{n \to \infty} \int_{\partial \Omega} |f_n - 1|^2 \rho_{\zeta}ds = 0.
\end{equation} Consequently, after passing to a subsequence, we can ensure that \[ \lim_{n \to \infty} f_n(\sigma) = 1\] for almost every $\sigma \in \partial \Omega$ with respect to the measure $\rho_\zeta d s$.
\end{lemma}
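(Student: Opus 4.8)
The plan is to deduce \eqref{E:Thm2ProofEq3} from \eqref{E:Thm2ProofEq2} by controlling the difference between the densities $\rho_{\zeta_n}$ and $\rho_{\zeta}$, using the geometric formula \eqref{E:DensityCircleRadiusFormula} which expresses $\rho_\zeta(\sigma)$ in terms of the radius $R_{\zeta,\sigma}$ of the circle through $\zeta$ tangent to $\partial\Omega$ at $\sigma$. First I would write
\[
\int_{\partial\Omega}|f_n-1|^2\rho_\zeta\,ds
\;=\;\int_{\partial\Omega}|f_n-1|^2\rho_{\zeta_n}\,ds
\;+\;\int_{\partial\Omega}|f_n-1|^2(\rho_\zeta-\rho_{\zeta_n})\,ds,
\]
so that, since the first term tends to $0$ by \eqref{E:Thm2ProofEq2} and $|f_n-1|^2\le 4$, it suffices to show that $\|\rho_{\zeta_n}-\rho_\zeta\|_{L^1(ds)}\to 0$ as $\zeta_n\to\zeta$. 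The point where $\zeta$ itself sits is the delicate one: near $\sigma=\zeta$ the density $\rho_{\zeta_n}(\sigma)$ can be large (of size $1/\kappa$, the reciprocal curvature), so I would split $\partial\Omega$ into a small arc $I_\epsilon$ around $\zeta$ and its complement. On $\partial\Omega\setminus I_\epsilon$ the integrands $\rho_{\zeta_n}$ converge uniformly to $\rho_\zeta$ (the formula \eqref{E:DensityCircleRadiusFormula} depends continuously on $\zeta_n$ away from the diagonal), giving $L^1$-convergence there. On $I_\epsilon$ I would bound $\int_{I_\epsilon}\rho_{\zeta_n}\,ds$ and $\int_{I_\epsilon}\rho_\zeta\,ds$ separately and show each is small, uniformly in $n$, when $\epsilon$ is small.

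The key estimate for the arc $I_\epsilon$ is that $\int_{I_\epsilon}\rho_{\zeta_n}\,ds$ is the $\mu_{\zeta_n}$-mass of $I_\epsilon$ minus the atom at $\zeta_n$, and that this mass has a clean geometric meaning: by \eqref{E:DLformula}--\eqref{E:MuZetaEq}, $\int_{A}\rho_w\,ds=\frac{1}{\pi}\int_A d\arg(\sigma-w)$ is $1/\pi$ times the angle subtended at $w$ by the arc $A$. Taking $A=I_\epsilon\setminus\{w\}$ for $w=\zeta_n$ (or $w=\zeta$), the subtended angle is at most $\pi-\theta_{w}+($a term that $\to 0$ as $\epsilon\to 0$, uniformly for $w$ in a neighborhood of $\zeta)$, where $\theta_w$ is the aperture at $w$; in particular, if $\zeta$ is not a corner the subtended angle of $I_\epsilon\setminus\{w\}$ tends to $0$ uniformly in $w$ near $\zeta$, and if $\zeta$ is a corner the total variation $\|\mu_{\zeta_n}-\mu_\zeta\|$ on $I_\epsilon$ is still controlled because the atomic parts $(1-\theta_{\zeta_n}/\pi)\delta_{\zeta_n}$ and $(1-\theta_\zeta/\pi)\delta_\zeta$ only contribute to the part of the integral that we have already discarded by passing to the density $\rho$. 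This makes both $\int_{I_\epsilon}\rho_{\zeta_n}\,ds$ and $\int_{I_\epsilon}\rho_\zeta\,ds$ less than any prescribed $\eta>0$ once $\epsilon$ is chosen small (and then $n$ large so that $\zeta_n\in I_{\epsilon/2}$), which is exactly what is needed.

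Combining the two regions gives $\|\rho_{\zeta_n}-\rho_\zeta\|_{L^1(ds)}\to 0$, hence \eqref{E:Thm2ProofEq3}. For the final assertion, \eqref{E:Thm2ProofEq3} says $|f_n-1|^2\to 0$ in $L^1(\rho_\zeta\,ds)$; by the standard fact that $L^1$-convergence implies a.e.\ convergence along a subsequence, we may pass to a subsequence along which $|f_n(\sigma)-1|^2\to 0$, i.e.\ $f_n(\sigma)\to 1$, for $\rho_\zeta\,ds$-almost every $\sigma\in\partial\Omega$. The main obstacle is the bookkeeping near $\sigma=\zeta$: one must be careful that the possible unboundedness of $\rho_{\zeta_n}$ there (when the curvature is large, or in the limiting/corner configuration) does not spoil the uniform-in-$n$ smallness of the arc integral, and the cleanest way to handle this is through the ``subtended angle'' interpretation of the $\mu$-masses rather than through pointwise bounds on the densities themselves.
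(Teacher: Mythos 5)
Your overall split into a small arc $I_\epsilon$ around $\zeta$ and its complement mirrors the paper's approach, but the intermediate reduction you propose is incorrect. You reduce the lemma to the claim that $\|\rho_{\zeta_n}-\rho_\zeta\|_{L^1(ds)}\to 0$, and try to establish this by showing $\int_{I_\epsilon}\rho_{\zeta_n}\,ds$ and $\int_{I_\epsilon}\rho_\zeta\,ds$ are each small, uniformly in $n$. This fails when $\zeta$ is a corner. Since $\int_{\partial\Omega}\rho_w\,ds = \theta_w/\pi$, if $\zeta$ is a corner with $\theta_\zeta<\pi$ but the approaching points $\zeta_n$ are non-corners with $\theta_{\zeta_n}=\pi$, then $\int_{\partial\Omega}\rho_{\zeta_n}\,ds=1$ while $\int_{\partial\Omega}\rho_\zeta\,ds=\theta_\zeta/\pi<1$, so $\|\rho_{\zeta_n}-\rho_\zeta\|_{L^1}\ge 1-\theta_\zeta/\pi$ does not tend to $0$. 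Concretely, the excess mass $1-\theta_\zeta/\pi$ of $\rho_{\zeta_n}\,ds$ concentrates near $\zeta$ as $\zeta_n\to\zeta$ (one can check this explicitly for a square and $\zeta$ a vertex), so $\int_{I_\epsilon}\rho_{\zeta_n}\,ds\to 1-\theta_\zeta/\pi + \int_{I_\epsilon}\rho_\zeta\,ds$, which is bounded away from $0$ for any fixed $\epsilon$. Your ``subtended angle at most $\pi-\theta_w + o(1)$'' bound with $w=\zeta_n$ reads as $0+o(1)$ when $\zeta_n$ is not a corner, which is false; and the closing remark about the atomic parts ``only contributing to the part already discarded'' does not address the real issue, which lies entirely in the absolutely continuous parts.

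The lemma is nonetheless true, and the fix is to avoid ever comparing $\rho_{\zeta_n}$ to $\rho_\zeta$ in $L^1$ on the small arc. The paper's argument bounds the near-$\zeta$ contribution using only $|f_n-1|^2\le 4$ \emph{against $\rho_\zeta$} (giving $4\int_{\partial\Omega\cap B}\rho_\zeta\,ds$, which is small for small $B$ since $\rho_\zeta\,ds$ is a finite measure), and handles the far-from-$\zeta$ contribution via the uniform convergence $\rho_{\zeta_n}\to\rho_\zeta$ on $\partial\Omega\setminus B$ together with the assumed $\int_{\partial\Omega}|f_n-1|^2\rho_{\zeta_n}\,ds\to 0$. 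The key structural difference is that the controlled quantity near $\zeta$ is the $\rho_\zeta$-mass of the small arc, not the $\rho_{\zeta_n}$-mass (which can be large), and the known smallness of $\int|f_n-1|^2\rho_{\zeta_n}\,ds$ is invoked only away from $\zeta$, where the densities genuinely do converge. Your deduction of the a.e.\ convergence along a subsequence from the $L^1$-convergence $|f_n-1|^2\to 0$ is fine.
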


\begin{proof} Note that whenever $\sigma$ is not a corner of $\partial \Omega$ or any of the points $\zeta_n$ or $\zeta$, we have \[
    \rho_{\zeta_n}(\sigma) - \rho_\zeta(\sigma) =  \Re \frac{(\zeta_n - \zeta) N(\sigma)}{\pi(\sigma-\zeta)(\sigma-\zeta_n)}.\] If $B = B(\zeta, \delta)$ is a disk around $\zeta$ of small radius $\delta > 0$, then for large enough $n$ the denominator on the right-hand side above is uniformly bounded from below for $\sigma \in \partial \Omega \setminus B$, with exception of a countable set. This shows uniform convergence of $\rho_{\zeta_n}(\sigma)$ to $\rho_\zeta(\sigma)$ for $\sigma \in \partial \Omega \setminus B$, again with exception of an at most countable set. Since $|f_n - 1|^2 \leq 4$, we obtain from \eqref{E:Thm2ProofEq2} that
\begin{align*}
\limsup_{n \to \infty} \int_{\partial \Omega} |f_n - 1|^2 \rho_{\zeta}ds & \leq \limsup_{n \to \infty} \int_{\partial \Omega \cap B} |f_n - 1|^2 \rho_{\zeta}ds
\\
&+ \limsup_{n \to \infty} \int_{\partial \Omega \setminus B} |f_n - 1|^2 \rho_{\zeta}ds
\\
&\leq  4 \int_{\partial \Omega \cap B} \rho_\zeta ds.
\end{align*} Since $\partial \Omega \cap B$ is an arc of length which tends to $0$ as the radius $\delta$ of $B$ tends to $0$, the last quantity above can be made arbitrarily small by choosing $\delta$ small enough. This establishes \eqref{E:Thm2ProofEq3}. Basic measure theory now implies that we may pass again to a subsequence and ensure the pointwise convergence $f_n \to 1$ almost everywhere with respect to $\rho_\zeta d s$.
\end{proof}


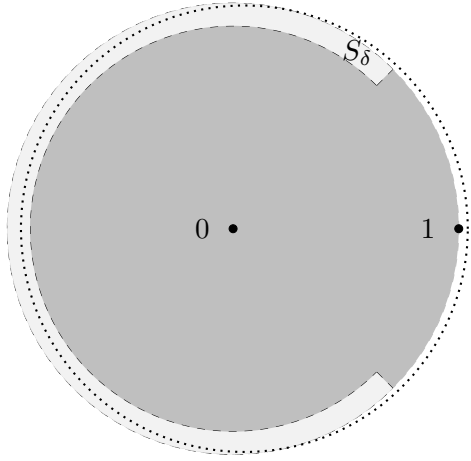
\begin{figure}
\centering
\begin{tikzpicture}[scale=3]
    \draw [dashed, name path=A, domain=45:315, samples=1000] plot ({cos(\x)}, {sin(\x)});
    \filldraw[gray!50, dashed] (0, 0) circle (1);
    \draw [dashed, name path=B, domain=45:315, samples=1000] plot ({0.9*cos(\x)}, {0.9*sin(\x)});
    \draw[dashed] (0.9*0.707, 0.9*0.707) -- (0.707, 0.707);
    \draw[dashed] (0.9*0.707, -0.9*0.707) -- (0.707, -0.707);

    \tikzfillbetween[of=A and B]{gray!10};
    \draw[dotted, thick, opacity= 2] (0.05, 0) circle (0.99);

    \filldraw (0, 0) circle (0.5pt) node[left, xshift = -5pt]{$0$};     
    
    \filldraw (1, 0) circle (0.5pt) node[left, xshift= - 5pt]{$1$}; 

    \node at (0.55, 0.780) {$S_\delta$};

\end{tikzpicture}
\caption{The unit disk in dark grey with the strip $S_\delta$ removed. The dotted circle containing the dark grey area has a radius slightly smaller than $1$.}
\label{fig:fig2}
\end{figure}

Out next observation extracts more information from \eqref{E:Thm2ProofInitAssmpt}. Consider the strips 
\[S_\delta = \{ z = re^{it} : 1-\delta < r < 1, |t| \in [\pi/4, \pi] \}, \quad \delta > 0. \] These strips have a fixed large ``length" but shrinking ``width". One such strip is marked in Figure \ref{fig:fig2}. We claim that each one of the strips $S_\delta$ intersects the images $K_\Omega f_n(\Omega)$ non-trivially for infinitely many indices $n$. For if not, then for some fixed $\delta > 0$, we would have that $S_\delta \cap K_\Omega f_n(\partial \Omega)= \emptyset$ for all sufficiently large $n$, which means that the images $K_\Omega f_n(\partial \Omega)$ are entirely contained in $B(0,1) \setminus S_\delta$, where $B(0,1)$ denotes the closed disk of radius $1$ centred at the origin. But if $\epsilon_1$ and $\epsilon_2$ are sufficiently small positive numbers, then $B(0,1) \setminus S_\delta \subset B(\epsilon_1, 1-\epsilon_2)$, a disk of radius $1-\epsilon_2$ centred at the point $\epsilon_1 \in \RR$. See Figure~\ref{fig:fig2}. Recalling the geometric interpretation of the norm $\| K_\Omega f_n + \CC\mathbf{1}\|_{\partial \Omega}$ as the radius of the smallest disk containing the image of $K_\Omega f_n$, we would arrive at a contradiction to \eqref{E:Thm2ProofInitAssmpt}. Thus every strip $S_\delta$ contains points in the image of $K_\Omega f_n$ for infinitely many $n$. 

\begin{lemma} \label{L:Thm2ProofLemma2} With notations as above, we may pass to a subsequence again, and obtain a new sequence $(\zeta_n')$ which converges to some point $\zeta' \in \partial \Omega$, and such that \[ \lim_{n \to \infty} f_n(\sigma) = \alpha\] for some unimodular constant $\alpha \neq 1$ and for almost every $\sigma \in \partial \Omega$ with respect to the measure $\rho_{\zeta'} d s$.
\end{lemma}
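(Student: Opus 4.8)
The plan is to run the same argument that produced Lemma~\ref{L:Thm2ProofLemma1}, but now anchored at a sequence of boundary points where $K_\Omega f_n$ is close to the unit circle \emph{and} has argument bounded away from $0$; the latter feature is exactly what will force the limiting unimodular constant to differ from $1$.

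\textbf{Step 1 (choosing the base points).} We have just shown that each strip $S_\delta$ meets the image $K_\Omega f_n(\partial\Omega)$ for infinitely many $n$. Applying this with $\delta = 1/k$ and selecting indices greedily, I would obtain a strictly increasing sequence $n_k$ and points $\zeta_{n_k}' \in \partial\Omega$ with $K_\Omega f_{n_k}(\zeta_{n_k}') \in S_{1/k}$; here one uses that each $K_\Omega f_n$ is continuous on $\partial\Omega$, so its image actually meets the open strip. Passing to this subsequence and relabelling, we get for every $n$ a point $\zeta_n'$ with $|K_\Omega f_n(\zeta_n')| \to 1$ and $\arg K_\Omega f_n(\zeta_n') \in [\pi/4,\pi]\cup[-\pi,-\pi/4]$. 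By compactness of $\partial\Omega$ and of the closed unit disk, a further subsequence gives $\zeta_n' \to \zeta'$ for some $\zeta'\in\partial\Omega$ and $K_\Omega f_n(\zeta_n') \to \alpha$ with $|\alpha| = 1$; since the angular constraint is closed, it persists in the limit, so $\alpha \ne 1$.

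\textbf{Step 2 ($L^2$ concentration and pointwise convergence).} I would then repeat the computation preceding Lemma~\ref{L:Thm2ProofLemma1} with $\alpha$ in the role of $1$. Since $\|f_n\|_\Omega \le 1$, $|\alpha| = 1$, and $\mu_{\zeta_n'}$ is a probability measure,
\[
\int_{\partial\Omega}|f_n-\alpha|^2\,d\mu_{\zeta_n'} = \int_{\partial\Omega}\bigl(|f_n|^2 - 2\Re(\overline{\alpha}f_n) + 1\bigr)\,d\mu_{\zeta_n'} \le 2 - 2\Re\Bigl(\overline{\alpha}\int_{\partial\Omega}f_n\,d\mu_{\zeta_n'}\Bigr),
\]
and the right-hand side converges to $2-2|\alpha|^2 = 0$ as $n\to\infty$. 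Dropping the atomic part of $\mu_{\zeta_n'}$ leaves $\int_{\partial\Omega}|f_n-\alpha|^2\rho_{\zeta_n'}\,ds \to 0$. Then, exactly as in the proof of Lemma~\ref{L:Thm2ProofLemma1}, the uniform convergence of $\rho_{\zeta_n'}$ to $\rho_{\zeta'}$ off a small disk $B(\zeta',\delta)$ (and off a countable set), together with the smallness of $\int_{\partial\Omega\cap B(\zeta',\delta)}\rho_{\zeta'}\,ds$ for small $\delta$ and the crude bound $|f_n-\alpha|^2\le 4$, upgrades this to $\int_{\partial\Omega}|f_n-\alpha|^2\rho_{\zeta'}\,ds \to 0$. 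A final passage to a subsequence turns this $L^2(\rho_{\zeta'}\,ds)$-convergence into pointwise a.e.\ convergence $f_n \to \alpha$, which is the assertion.

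\textbf{Main obstacle.} Everything after Step 1 is a near-verbatim reuse of Lemma~\ref{L:Thm2ProofLemma1}, so the only genuine content lies in Step 1: one must make sure the selected limit satisfies $\alpha\ne 1$, which is precisely what the angular part $|t|\ge\pi/4$ in the definition of $S_\delta$ guarantees, since that constraint is closed and hence survives the limit. It is also worth recording, as in Lemma~\ref{L:Thm2ProofLemma1}, that the exceptional sets (the corners of $\partial\Omega$ and the points $\zeta_n'$, $\zeta'$) are countable and therefore $\rho_{\zeta'}\,ds$-null, so the pointwise identity for $\rho_{\zeta_n'}-\rho_{\zeta'}$ used in Step 2 holds almost everywhere, as needed.
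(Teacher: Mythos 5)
Your proposal is correct and follows essentially the same route as the paper's proof: the paper likewise selects, via the strips $S_\delta$ and a "routine compactness argument," a subsequence and points $\zeta_n'\to\zeta'$ with $K_\Omega f_n(\zeta_n')\to\alpha$ unimodular and lying in the closure of every $S_\delta$ (hence $\alpha\neq 1$), and then simply reruns the computation of Lemma~\ref{L:Thm2ProofLemma1} with $\alpha$ in place of $1$. You have merely spelled out the diagonal selection in Step~1 and the $L^2$ estimate in Step~2, both of which the paper leaves implicit.
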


\begin{proof}
Since each strip $S_\delta$ intersects the images of $K_\Omega f_n$ for infinitely many $n$, passing to a subsequence and a routine compactness argument produces a sequence $(\zeta'_n)$ convergent to some $\zeta' \in \partial \Omega$, for which $K_\Omega f_n(\zeta'_n) \to \alpha$, with $\alpha$ unimodular and lying in the closure of each of the strips $S_\delta$. Thus $\alpha \neq 1$. We therefore merely need to repeat the previous arguments to see that, after passing to a subsequence, we will have $f_n(\sigma) \to \alpha$ for almost every $\sigma$ with respect to the measure $\rho_{\zeta'} d s$.
\end{proof}

\subsection{Proof of Theorem~\ref{T:Maintheorem2}} The above arguments are valid for $f_n \in C(\partial \Omega)$. However, under the assumption of analyticity, the sequence $(f_n)$ cannot converge to two different constants on two different sets of positive arclength measure. To make this statement precise, we appeal to the classical theory of analytic functions in the (open) unit disk $\DD = \{ z \in \CC : |z| < 1 \}$. Here \cite[Chapter II]{Ga06} is an excellent reference for the claims made in the following proof.

\begin{proof}[Proof of Theorem~\ref{T:Maintheorem2}]  Let $H^\infty = H^\infty(\DD)$ be the space of bounded analytic functions in $\DD$, identified as usual through boundary function correspondence with a weak-star closed subspace of the space $L^\infty(\partial \DD) = (L^1(\partial \DD)^*$ of bounded measurable functions on $\partial \DD$, the dual of the Lebesgue space $L^1(\partial \DD)$ of functions integrable on $\partial \DD$ with respect to the Lebesgue measure (arclength measure) on $\partial \DD$. It is well known that a function $\tilde{f} \in H^\infty$ which vanishes on a subset of positive Lebesgue measure on $\partial \DD$ must vanish identically. 

Fix some conformal mapping $\phi: \DD \to \Omega$. Under the assumption that $f_n \in \A(\Omega)$, $\|f_n\|_\Omega \leq 1$, the functions
\[
\tilde{f_n} := f_n \circ \phi \in H^\infty, \quad n \geq 1
\] are bounded in modulus by $1$ in $\DD$. By Carath\'eodory's classical theorem (see, for instance, \cite[Chapter I.3]{GM05}), $\phi$ extends to a homeomorphism between $\partial \DD$ and $\partial \Omega$. If $\|K_\Omega f_n + \CC \mathbf{1}\|_\Omega \to 1$, then Lemmas~\ref{L:Thm2ProofLemma1} and \ref{L:Thm2ProofLemma2} show that there exist two sets $E, E' \subset \partial \Omega$ which have positive arclength measure, such that
\[
\lim_{n \to \infty} \tilde{f_n}(\lambda) = 1, \quad \lambda \in \phi^{-1}(E)
\]
and
\[
\lim_{n \to \infty} \tilde{f_n}(\lambda) = \alpha, \quad \lambda \in \phi^{-1}(E').
\]
Since $\Omega$ is convex, the curve $\partial \Omega$ is rectifiable, and general theory of harmonic measures tells us that the sets $\phi^{-1}(E)$ and $\phi^{-1}(E')$ have positive Lebesgue measure (see \cite[Chapter VI]{GM05}). Since $L^1(\partial \DD)$ is separable and the functions $\tilde{f_n}$ are uniformly bounded by $1$ in modulus, the usual Helly-type selection process will produce a subsequence of $(\tilde{f}_n)$ which converges in the weak-star topology to some function $\widetilde{f} \in H^\infty$. By the above pointwise convergence, we must have $\tilde{f} \equiv 1$ on $\phi^{-1}(E)$ and $\tilde{f} \equiv \alpha $ on $\phi^{-1}(E')$. Then the non-zero function $\widetilde{f} - 1$ vanishes on the subset $\phi^{-1}(E)$ of positive Lebesgue measure on $\partial \DD$. This is a contradiction, which shows that our assumption $\|K_\Omega f_n + \CC \mathbf{1}\|_\Omega \to 1$ must be false. Theorem~\ref{T:Maintheorem2} follows.
\end{proof}

\subsection{A proof of Neumann's lemma}
We indicate how one may proceed to use our above arguments to obtain a proof of Neumann's lemma, stating that $c(\Omega) = 1$ if and only if $\Omega$ is a triangle or a quadrilateral. We need only the following simple geometric observation regarding the densities $\rho_\zeta$.

\begin{lemma} \label{L:NeumannLemmaLemma} Fix $\zeta \in \partial \Omega$. Any $\sigma \in \partial \Omega \setminus \{\zeta\}$ which is not a corner of $\partial \Omega$ and which satisfies $\rho_\zeta(\sigma) = 0$ is contained in the union of at most two line segments of $\partial \Omega$ containing $\zeta$.
\end{lemma}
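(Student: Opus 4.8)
The plan is to convert the analytic hypothesis $\rho_\zeta(\sigma)=0$ into a statement about tangent lines, and then use convexity to pin down the position of $\sigma$. Since $\sigma$ is not a corner, $N(\sigma)$ and $T(\sigma)$ exist, and by the formula \eqref{E:MuZetaDensityFormula} the condition $\rho_\zeta(\sigma)=0$ says precisely that $\Re\bigl(N(\sigma)/(\sigma-\zeta)\bigr)=0$, i.e.\ that $\sigma-\zeta$ is a real multiple of $T(\sigma)$; equivalently, by \eqref{E:DensityCircleRadiusFormula} and the remark following it, that $R_{\zeta,\sigma}=\infty$. In all cases this means that $\zeta$ lies on the tangent line $L_\sigma$ to $\partial\Omega$ at $\sigma$.

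Next I would exploit convexity. At a non-corner point $\sigma$ the tangent line $L_\sigma$ is a supporting line of $\Omega$, so $\Omega$ is contained in one of the two closed half-planes bounded by $L_\sigma$; consequently $L_\sigma\cap\Omega\subseteq\partial\Omega$, since a small disk about a point of $\Omega^o$ lying on $L_\sigma$ would meet both open half-planes. As $\sigma\in L_\sigma\cap\Omega$ and, by the previous paragraph, $\zeta\in L_\sigma\cap\Omega$ as well, convexity of $\Omega$ forces the segment $[\zeta,\sigma]$ to lie in $L_\sigma\cap\Omega$, hence in $\partial\Omega$. Thus, since $\sigma\neq\zeta$, this produces a nondegenerate line segment $[\zeta,\sigma]\subseteq\partial\Omega$ passing through $\zeta$.

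Finally I would observe that the convex curve $\partial\Omega$ contains at most two maximal line segments through the fixed point $\zeta$: parametrizing $\partial\Omega$ near $\zeta$, either $\zeta$ lies in the relative interior of such a segment, in which case it is the only one, or $\zeta$ is an endpoint, and then there is at most one maximal segment emanating from $\zeta$ on each of the two sides of $\zeta$ along $\partial\Omega$. Every segment $[\zeta,\sigma]$ obtained above is contained in one of these (at most two) maximal segments, so every admissible $\sigma$ lies in their union, which is exactly the assertion of the lemma. The argument is elementary; the only points needing a word of justification are the standard facts that the tangent line to a convex body at a smooth boundary point is a supporting line, and that a planar convex curve has at most two maximal line segments through a given point (both also discussed in Appendix~\ref{sec:appendixA}), and I would expect these to be the only mildly delicate steps.
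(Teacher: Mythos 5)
Your proof is correct. Its first step coincides exactly with the paper's: read off from \eqref{E:MuZetaDensityFormula} or \eqref{E:DensityCircleRadiusFormula} that $\rho_\zeta(\sigma)=0$ means $R_{\zeta,\sigma}=\infty$, i.e.\ $\zeta$ lies on the tangent line $L_\sigma$. Where you diverge is in how you close the argument. The paper jumps directly to counting tangent lines: it asserts it suffices to show at most two tangent lines of $\Omega$ pass through $\zeta$, and proves this by a sector argument (two tangent lines through $\zeta$ carve out a single sector containing $\Omega$, and any third line through $\zeta$ and that open sector would separate the two tangency points, so cannot support $\Omega$). You instead make explicit the step the paper leaves implicit — that a tangent line through $\zeta$ actually yields a boundary segment $[\zeta,\sigma]\subset L_\sigma\cap\Omega\subset\partial\Omega$, since $L_\sigma$ is a supporting line and so meets $\Omega$ only in $\partial\Omega$ — and then you bound the number of such segments by parametrizing $\partial\Omega$ near $\zeta$: at most one maximal segment on each side of $\zeta$, pinned down by the one-sided tangent directions $\alpha_\pm(\zeta)$ from Appendix~\ref{sec:appendixA}. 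Both counting arguments rest on convexity and are sound; yours is a touch more elementary and fills in the supporting-line-to-segment step that the paper treats as understood, while the paper's sector picture gives a somewhat more geometric reason for the bound two. One small caveat: you attribute the "at most two maximal segments through a point" fact to Appendix~\ref{sec:appendixA}, which does not state it outright, though the one-sided tangent machinery there does make your sketch rigorous.
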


\begin{proof} It will suffice to show that all $\sigma$ satisfying the above conditions are contained in at most two different tangent lines to $\Omega$. To see this, recall formula \eqref{E:DensityCircleRadiusFormula}. The condition $\rho_\zeta(\sigma) = (2 \pi R_{\zeta,\sigma})^{-1} = 0$ gives $R_{\zeta, \sigma} = \infty$, and so $\zeta$ is contained in the tangent line to $\Omega$ at $\sigma$. The tangent line divides the plane $\CC$ into two half-planes, one of which contains $\Omega$. Assume that two different tangent lines, at $\sigma$ and $\sigma'$, intersect at $\zeta$. They divide the plane $\CC$ into four sectors, and by convexity precisely one of those sectors contains $\Omega$. Now, any line which passes through $\zeta$ and the open sector containing $\Omega$ must separate $\sigma, \sigma' \in \partial \Omega$. Therefore, it is not a tangent to $\Omega$.
\end{proof}

Neumann's lemma is established as follows. Assume that $c(\Omega) = 1$. From Lemmas~\ref{L:Thm2ProofLemma1} and \ref{L:Thm2ProofLemma2} we see that two points $\zeta, \zeta'$ exist for which the measures $\rho_\zeta ds$ and $\rho_{\zeta'} ds$ are mutually singular. From Lemma~\ref{L:NeumannLemmaLemma} we deduce that the support of $\rho_\zeta ds$ is the union of at most two line segments containing $\zeta'$, and the complement of the support of $\rho_\zeta ds$ is also a union of at most two line segments. Thus $\partial \Omega$ is the union of at most four line segments.

\section{Examples}
\label{S:ExamplesSection}

In this section, we compute and estimate the configuration constants for some types of domains. 

\subsection{Configuration constant of an ellipse}

For $a,b>0$, let
\[
\Omega_{a,b}:=\Bigl\{x+iy\in\CC: \frac{x^2}{a^2}+\frac{y^2}{b^2}\le1\Bigr\}
\] be the ellipse centred at the origin with semi-axes of lengths $a$ and $b$, respectively. It is quite remarkable that the configuration constant can in this case be computed explicitly.

\begin{proposition}\label{P:EllipseConst}
With the above notation, we have
\[
c(\Omega_{a,b})=\frac{2}{\pi}\arctan\Bigl(\frac{1}{2}\Bigl|\frac{b}{a}-\frac{a}{b}\Bigr|\Bigr).
\]
\end{proposition}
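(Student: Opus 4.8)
The plan is to compute $c(\Omega_{a,b})$ directly from the formula $c(\Omega) = \sup_{\zeta, \zeta' \in \partial \Omega} \tfrac12 \|\mu_\zeta - \mu_{\zeta'}\|$, using the fact that $\partial \Omega_{a,b}$ is smooth so that each $\mu_\zeta = \rho_\zeta\, ds$ is absolutely continuous with density $\rho_\zeta(\sigma) = \tfrac{1}{2\pi R_{\zeta,\sigma}} \geq 0$. For two points $\zeta, \zeta'$, since both densities are nonnegative and integrate to $1$, one has $\tfrac12\|\mu_\zeta - \mu_{\zeta'}\| = \tfrac12\int_{\partial\Omega}|\rho_\zeta - \rho_{\zeta'}|\,ds = 1 - \int_{\partial\Omega}\min(\rho_\zeta,\rho_{\zeta'})\,ds$. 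So the task is to \emph{minimize} $\int \min(\rho_\zeta,\rho_{\zeta'})\,ds$ over pairs $\zeta,\zeta'\in\partial\Omega_{a,b}$.

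First I would exploit the affine-invariance of the configuration constant (noted in the introduction, to be proved in Section~\ref{S:ApplNumRan}) together with a genuinely elliptic symmetry: the ellipse is the image of the unit circle under $w \mapsto \tfrac{a+b}{2}w + \tfrac{a-b}{2}\bar w$, and I expect the extremal pair $\zeta,\zeta'$ to be the two endpoints of one of the axes (by symmetry considerations — reflections in both axes and the structure of $R_{\zeta,\sigma}$). I would then parametrize $\partial\Omega_{a,b}$ by $\sigma(t) = (a\cos t, b\sin t)$, compute $N(\sigma)$, $ds$, and hence get an explicit expression for $\rho_\zeta(\sigma)$ as a function of $t$ when $\zeta$ is, say, $(a,0)$ or $(0,b)$. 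Using the circle-radius formula $\rho_\zeta(\sigma) = 1/(2\pi R_{\zeta,\sigma})$ and the algebraic identity for $R_{\zeta,\sigma}$ derived in the introduction (from $|(\sigma-\zeta) - RN(\sigma)|^2 = R^2$, i.e. $R_{\zeta,\sigma} = |\sigma-\zeta|^2 / (2\,\Re(\overline{N(\sigma)}(\sigma-\zeta)))$ up to sign) should keep the computation manageable. The appearance of $\arctan$ in the answer strongly suggests that after the right substitution the integral $\int\min(\rho_\zeta,\rho_{\zeta'})\,ds$ reduces to something like $\int \tfrac{dt}{1 + c^2 \tan^2(t/2)}$ or a rational function integrable via an arctangent, with $c$ related to $|b/a - a/b|$.

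The key steps, in order: (1) reduce $\tfrac12\|\mu_\zeta-\mu_{\zeta'}\|$ to $1 - \int\min(\rho_\zeta,\rho_{\zeta'})$; (2) use symmetry/affine invariance to argue the sup is attained (or at least bounded) by the pair of axis-endpoints, and reduce to computing $\int\min(\rho_\zeta,\rho_{\zeta'})$ for that pair — this requires identifying where $\rho_\zeta = \rho_{\zeta'}$ on $\partial\Omega$, i.e. the locus of points $\sigma$ for which the tangent circle through $\zeta$ and the one through $\zeta'$ have equal radius, which by the geometric interpretation should be an arc with nice endpoints; (3) carry out the explicit integration to get the $\arctan$ expression; (4) verify the other expression $\tfrac{2}{\pi}\arctan(\tfrac12 e^2/\sqrt{1-e^2})$ follows by substituting $b/a = \sqrt{1-e^2}$. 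I would also check the degenerate sanity points: $a = b$ gives $c = 0$ (disk), and $a/b \to \infty$ gives $c \to 1$ (degenerate to a segment), both consistent with the formula.

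The main obstacle I anticipate is step (2): rigorously proving that the supremum over \emph{all} pairs $\zeta,\zeta'$ is achieved at the axis endpoints, rather than merely computing the value for that particular pair. One clean route is to show $\tfrac12\|\mu_\zeta - \mu_{\zeta'}\| \le$ (the axis-endpoint value) for every pair, perhaps by a rearrangement or monotonicity argument in the parameter $t$, or by directly analyzing the function $(\zeta,\zeta') \mapsto \int\min(\rho_\zeta,\rho_{\zeta'})\,ds$ and showing its critical points. Alternatively, since Theorem~\ref{T:ROmegaTheorem} gives the upper bound $c(\Omega) \le 1 - \tfrac{1}{2\pi}\int_{\partial\Omega} ds/R_\Omega$, I would compute $R_{\Omega_{a,b}}(\sigma)$ explicitly (the radius of the smallest tangent disk at $\sigma$ containing the ellipse — attained at the far vertex) and check whether this upper bound already \emph{equals} $\tfrac{2}{\pi}\arctan(\tfrac12|b/a - a/b|)$; if so, it suffices to exhibit the axis-endpoint pair as achieving it, which is the easier direction and avoids the global optimization entirely. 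I expect this second strategy (matching the $R_\Omega$ upper bound with an explicit extremal pair) to be the cleanest, so I would try it first.
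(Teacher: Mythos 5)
Your preferred (``cleanest'') strategy---deducing the proposition by matching the $R_\Omega$ upper bound of Theorem~\ref{T:ROmegaTheorem} against an explicit extremal pair---does not work, because that bound is \emph{strictly weaker} than the claimed value for every non-circular ellipse. Using the parametrization $\gamma(t)=a\cos t+ib\sin t$ and the density formula of Lemma~\ref{L:density}, one finds $R_{\gamma(s),\gamma(t)}=|\gamma'(t)|\bigl(1+B\cos(t+s)\bigr)/A$ with $A=2ab/(a^2+b^2)$, $B=(b^2-a^2)/(a^2+b^2)$, hence $R_\Omega(\gamma(t))=|\gamma'(t)|(1+|B|)/A$, and Theorem~\ref{T:ROmegaTheorem} yields
\[
c(\Omega_{a,b})\le 1-\frac{1}{2\pi}\int_{\partial\Omega}\frac{ds}{R_\Omega}=1-\frac{A}{1+|B|}=1-\min\Bigl(\frac{a}{b},\frac{b}{a}\Bigr),
\]
which exceeds $\tfrac{2}{\pi}\arctan\bigl(\tfrac12|b/a-a/b|\bigr)$ whenever $a\ne b$ (for $a=1$, $b=2$ the bound is $1/2$ while the true value is $\tfrac{2}{\pi}\arctan(3/4)\approx 0.41$). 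The $R_\Omega$ bound comes from replacing $\mu_{\zeta'}$ by a fixed reference measure $\nu\le\mu_\zeta$ and is generically not sharp, so this route cannot give equality.

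Your first strategy is the right one and is essentially the paper's proof (the paper works directly with $\tfrac12\int|\rho_\zeta-\rho_{\zeta'}|\,ds$ rather than the equivalent $1-\int\min(\rho_\zeta,\rho_{\zeta'})\,ds$). The difficulty you flag in step~(2), rigorously identifying the extremal pair, disappears once the density formula is in hand: since $d\mu_{\gamma(s)}(\gamma(t))=\tfrac{1}{2\pi}\tfrac{A}{1+B\cos(t+s)}\,dt$, the quantity $\tfrac12\|\mu_{\gamma(s_1)}-\mu_{\gamma(s_2)}\|$ depends only on $s:=s_1-s_2$ and, after locating the sign change and shifting the integration interval, equals $\tfrac{1}{2\pi}\int_{-s/2}^{s/2}\tfrac{2AB\cos t}{1-B^2\cos^2 t}\,dt$, whose integrand is positive precisely on $(-\pi/2,\pi/2)$; the supremum at $s=\pi$ is then immediate, with every pair of parametrically antipodal points $\gamma(s),\gamma(s+\pi)$ being extremal (not only the axis endpoints). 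The closing arctangent comes from $x=\sin t$ together with $A^2+B^2=1$. So: drop the $R_\Omega$ shortcut, and push through the explicit density computation instead.
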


In order to prove the proposition, our first step is to derive an expression for the density
of the Neumann-Poincaré kernel of $\Omega_{a,b}$. The boundary $\partial\Omega_{a,b}$ is parametrized by
\begin{equation}\label{E:param}
\gamma(t):=a\cos t+ib\sin t, \quad t\in[0,2\pi].
\end{equation}
Here $[0,2\pi]$ can be replaced by any interval of length $2\pi$. Recalling formula \eqref{E:MuZetaDensityFormula} for $\mu_\zeta$ and setting $\zeta = \gamma(s)$, $\sigma = \gamma(t)$, we obtain
\begin{align}
d \mu_{\gamma(s)}(\gamma(t)) &= \rho_{\gamma(s)}(\gamma(t)) \, ds(\gamma(t)) \label{E:MuDensityParam} \\
&= \frac{1}{\pi} \Im \Bigg(\frac{T(\gamma(t))}{\gamma(t) - \gamma(s)}  \Bigg) |\gamma'(t)| \, dt \nonumber \\
&= \frac{1}{\pi}\Im\frac{\gamma'(t)}{\gamma(t)-\gamma(s)} \, dt. \nonumber
\end{align} 
Using \eqref{E:param}, this formula can be greatly simplified.

\begin{lemma}\label{L:density}
With the notation above, we have
\begin{equation}\label{E:density}
d\mu_{\gamma(s)}(\gamma(t))=\frac{1}{2\pi}\frac{A}{1+B\cos(t+s)}\,dt,
\quad s,t\in[0,2\pi],
\end{equation}
where
\[
A:=\frac{2ab}{a^2+b^2}
\quad\text{and}\quad
B:=\frac{b^2-a^2}{b^2+a^2}.
\]
\end{lemma}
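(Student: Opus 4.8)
The plan is to start from the parametrized expression
\[
d\mu_{\gamma(s)}(\gamma(t)) = \frac{1}{\pi}\Im\frac{\gamma'(t)}{\gamma(t)-\gamma(s)}\,dt
\]
already derived in \eqref{E:MuDensityParam}, and simply compute the quantity $\Im\bigl(\gamma'(t)/(\gamma(t)-\gamma(s))\bigr)$ using $\gamma(t) = a\cos t + ib\sin t$. First I would write $\gamma'(t) = -a\sin t + ib\cos t$, form the difference $\gamma(t)-\gamma(s) = a(\cos t - \cos s) + ib(\sin t - \sin s)$, and multiply numerator and denominator by the complex conjugate of the denominator, so that the imaginary part becomes a ratio of real polynomials in $\cos t, \sin t, \cos s, \sin s$. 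The numerator of the imaginary part will be
\[
\Im\bigl(\gamma'(t)\overline{(\gamma(t)-\gamma(s))}\bigr) = b\cos t\cdot a(\cos t - \cos s) - (-a\sin t)\cdot b(\sin t - \sin s),
\]
which simplifies to $ab\bigl(1 - \cos t\cos s - \sin t\sin s\bigr) = ab\bigl(1 - \cos(t-s)\bigr)$. The denominator is $|\gamma(t)-\gamma(s)|^2 = a^2(\cos t-\cos s)^2 + b^2(\sin t - \sin s)^2$.

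The main work is to simplify this denominator into a multiple of $1 + B\cos(t+s)$ times $1 - \cos(t-s)$. I would use the product-to-sum identities $\cos t - \cos s = -2\sin\frac{t+s}{2}\sin\frac{t-s}{2}$ and $\sin t - \sin s = 2\cos\frac{t+s}{2}\sin\frac{t-s}{2}$, so that
\[
|\gamma(t)-\gamma(s)|^2 = 4\sin^2\tfrac{t-s}{2}\Bigl(a^2\sin^2\tfrac{t+s}{2} + b^2\cos^2\tfrac{t+s}{2}\Bigr).
\]
Since $1 - \cos(t-s) = 2\sin^2\frac{t-s}{2}$, the troublesome factor $\sin^2\frac{t-s}{2}$ cancels between numerator and denominator, leaving
\[
\frac{1}{\pi}\,\frac{ab}{2\bigl(a^2\sin^2\frac{t+s}{2} + b^2\cos^2\frac{t+s}{2}\bigr)}.
\]
Finally I would rewrite $a^2\sin^2\frac{t+s}{2} + b^2\cos^2\frac{t+s}{2} = \frac{a^2+b^2}{2} + \frac{b^2-a^2}{2}\cos(t+s)$ using the double-angle formulas $\sin^2\theta = \frac{1-\cos2\theta}{2}$, $\cos^2\theta = \frac{1+\cos2\theta}{2}$. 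Dividing numerator and denominator by $\frac{a^2+b^2}{2}$ then produces exactly
\[
d\mu_{\gamma(s)}(\gamma(t)) = \frac{1}{2\pi}\,\frac{A}{1 + B\cos(t+s)}\,dt
\]
with $A = \frac{2ab}{a^2+b^2}$ and $B = \frac{b^2-a^2}{b^2+a^2}$, as claimed.

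This is a routine but slightly delicate trigonometric computation; there is no conceptual obstacle. The one point requiring a little care is the cancellation of the factor $\sin^2\frac{t-s}{2}$: one must note that this is legitimate for $t \neq s \pmod{2\pi}$, and that the resulting formula extends by continuity (or simply as an identity of densities, since the singleton $\{s\}$ is $dt$-null) to all $s,t \in [0,2\pi]$. I would also remark that since $|B| < 1$ whenever $ab \neq 0$, the denominator $1 + B\cos(t+s)$ is bounded away from zero, confirming that $\rho_{\gamma(s)}$ is a genuine continuous density with no corner mass — consistent with the fact that an ellipse has no corners.
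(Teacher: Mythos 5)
Your proposal is correct and matches the route the paper gestures at: the paper merely states that Lemma~\ref{L:density} follows from combining \eqref{E:param} with \eqref{E:MuDensityParam} and simplifying by elementary trigonometry, and your computation supplies exactly those details. The algebra checks out — $\Im\bigl(\gamma'(t)\overline{(\gamma(t)-\gamma(s))}\bigr) = ab(1-\cos(t-s))$, the denominator factors via the product-to-sum identities so that $2\sin^2\tfrac{t-s}{2}$ cancels, and the double-angle rewrite yields $\tfrac{A/2}{1+B\cos(t+s)}$ — and your remark about extending across $t=s$ and about $|B|<1$ is a reasonable small addition.
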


The lemma is established by combining \eqref{E:param} and \eqref{E:MuDensityParam}, and then using elementary trigonometric identities to simplify the resulting expression.  


With this formula in hand, we  now evaluate the configuration constant of the ellipse $\Omega_{a,b}$.

\begin{proof}[Proof of Proposition~\ref{P:EllipseConst}]
Using the formulas \eqref{E:RconfigConstantFormula} and \eqref{E:density}, we obtain
\[
c(\Omega_{a,b})
=\sup_{s_1,s_2\in[0,2\pi]}\frac{1}{2}\frac{1}{2\pi}\int_{[-\pi, \pi]}\Bigl|\frac{A}{1+B\cos(t+s_1)}-\frac{A}{1+B\cos(t+s_2)}\Bigr|\,dt.
\]
By the periodicity of $\cos$, this last expression simplifies to
\[
c(\Omega_{a,b})
=\sup_{s\in(0,2\pi)}\frac{1}{2}\frac{1}{2\pi}\int_{[-\pi,\pi]}\Bigl|\frac{A}{1+B\cos(t+s)}-\frac{A}{1+B\cos(t)}\Bigr|\,dt.
\]
For the time being, let us assume that $b\ge a$, so $B\ge0$.
Using the fact that \eqref{E:density} is the density of a probability
measure for each $s\in[0,2\pi]$, we have
\begin{align*}
&\frac{1}{2}\frac{1}{2\pi}\int_{[-\pi,\pi]}\Bigl|\frac{A}{1+B\cos(t+s)}-\frac{A}{1+B\cos(t)}\Bigr|\,dt\\
&=\frac{1}{2\pi}\int_{\{t:\cos t\ge\cos(t+s)\}}\Bigl(\frac{A}{1+B\cos(t+s)}-\frac{A}{1+B\cos(t)}\Bigr)\,dt.
\end{align*}
We readily verify that $\cos(t) \geq \cos(t+s)$ if and only if $t\in[-s/2,\,\pi-s/2]$. Therefore 


\begin{align*}
&\frac{1}{2}\frac{1}{2\pi}\int_{[-\pi,\pi]}\Bigl|\frac{A}{1+B\cos(t+s)}-\frac{A}{1+B\cos(t)}\Bigr|\,dt\\
&=\frac{1}{2\pi}\int_{-s/2}^{\pi-s/2}\Bigl(\frac{A}{1+B\cos(t+s)}-\frac{A}{1+B\cos(t)}\Bigr)\,dt\\
&=\frac{1}{2\pi}\int_{s/2}^{\pi+s/2}\frac{A}{1+B\cos(t)}\,dt-\frac{1}{2\pi}\int_{-s/2}^{\pi-s/2}\frac{A}{1+B\cos(t)}\,dt\\
&=\frac{1}{2\pi}\int_{\pi-s/2}^{\pi+s/2}\frac{A}{1+B\cos(t)}\,dt-\frac{1}{2\pi}\int_{-s/2}^{s/2}\frac{A}{1+B\cos(t)}\,dt\\
&=\frac{1}{2\pi}\int_{-s/2}^{s/2}\frac{A}{1-B\cos(t)}\,dt-\frac{1}{2\pi}\int_{-s/2}^{s/2}\frac{A}{1+B\cos(t)}\,dt\\
&=\frac{1}{2\pi}\int_{-s/2}^{s/2}\frac{2AB\cos(t)}{1-B^2\cos^2(t)}\,dt.
\end{align*}
It is clear that this last integral is maximized over $s\in[0,2\pi]$
when $s=\pi$. Putting everything together, we deduce that,
if $b\ge a$, then
\[
c(\Omega_{a,b})=\frac{1}{2\pi}\int_{-\pi/2}^{\pi/2}
\frac{2AB\cos(t)}{1-B^2\cos^2(t)}\,dt.
\]
All that remains is to evaluate the integral.
Making the substitution $x=\sin t$, and exploiting the
fact that $A^2+B^2=1$, we have
\begin{align*}
\frac{1}{2\pi}\int_{-\pi/2}^{\pi/2}
\frac{2AB\cos(t)}{1-B^2\cos^2(t)}\,dt
&=\frac{1}{2\pi}\int_{-1}^{1} \frac{2AB}{1-B^2(1-x^2)}\,dx\\
&=\frac{1}{\pi}\int_{-1}^{1} \frac{AB}{A^2+B^2x^2}\,dx\\
&=\frac{2}{\pi}\arctan\Bigl(\frac{B}{A}\Bigr)\\
&=\frac{2}{\pi}\arctan\Bigl(\frac{1}{2}\Bigl(\frac{b}{a}-\frac{a}{b}\Bigr)\Bigr).
\end{align*}
This proves the result in the case when $b\ge a$. 
The remaining case is obtained by exchanging the roles of
$a$ and $b$.
\end{proof}

\subsection{Integral estimates}
For a general domain, the exact value of $c(\Omega)$ is often inaccessible. In this section, we will present a simple estimate which is applicable to domains $\Omega$ with a non-flat part of the boundary which leads to an upper bound on $c(\Omega)$.

Assume that we find a Borel measure $\nu$ on $\partial\Omega$
such that
\[
k^\nu_\Omega := \sup\{\|\mu_\zeta-\nu\|:\zeta\in\partial\Omega\} < 1.
\]
If so, then, for every $\phi\in C(\partial\Omega)$ with $\|\phi\|_{\partial\Omega}\le1$, we have 
\[
\Bigl|K_\Omega\phi(\zeta)-\int_{\partial\Omega}\phi\,d\nu\Bigr|\le k^\nu_\Omega,
\quad \zeta\in\partial\Omega,
\] which shows that the image of $K_\Omega \phi$ is contained in a disk of radius $k^\nu_\Omega$ centred at $\int_{\partial \Omega} \phi \, d\nu$. Thus,\[
c_\RR(\Omega) = c_\CC(\Omega) = \|K_\Omega:C(\partial\Omega)/\CC \mathbf{1} \to C(\partial\Omega)/\CC \mathbf{1}\|\le k^\nu_\Omega.
\]
One approach is to seek a positive measure $\nu$ on $\partial\Omega$ satisfying
$\mu_\zeta\ge \nu$ for all $\zeta\in\partial\Omega$. Then \[\| \mu_\zeta - \nu\| = (\mu_\zeta - \nu)(\partial \Omega) = 1 - \nu(\partial \Omega),\] and so $k^\nu_\Omega= 1- \nu(\partial\Omega)$.

We will construct the largest non-negative Borel measure $\nu$ on $\partial \Omega$ which satisfies $\mu_\zeta - \nu \geq 0$. The construction is based on the geometric interpretation of the density $\rho_\zeta(\sigma)$ in \eqref{E:DensityCircleRadiusFormula} and the quantity $R_\Omega$ appearing in \eqref{E:RSigmaDefIntro}. In order to avoid the need to establish Borel measurability of $R_\Omega$ defined as a supremum of an uncountable family as in \eqref{E:RSigmaDefIntro}, we proceed to define it in a slightly different but equivalent way. Namely, it is easy to see that, given $\sigma\in\partial\Omega$, if there exists 
a closed disk $\Delta$ such that $\Omega\subset\Delta$ and $\sigma\in\partial\Delta$,
then there exists one of smallest radius. We denote this radius by $R_\Omega(\sigma)$. Note that if $\sigma$ is not a corner of $\partial \Omega$, then the corresponding disk must be tangent to $\partial \Omega$ at $\sigma$. If no disk passing through $\sigma$ exists which contains $\Omega$, then we set $R_\Omega(\sigma):=\infty$. This happens, for instance, if $\sigma$ is contained in the interior of a line segment in $\partial \Omega$. In particular, $R_\Omega(\sigma) = \infty$ for all but a finite number of points of any polygonal domain.

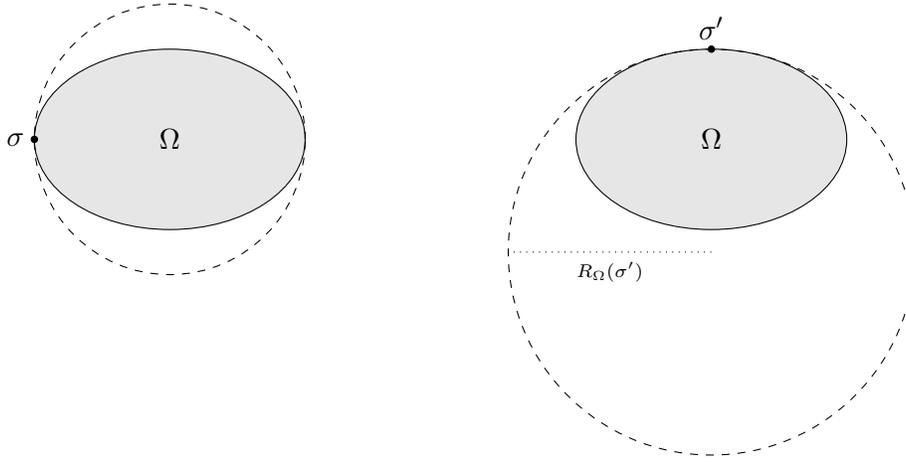
\begin{figure}
\centering
\begin{tikzpicture}[scale=0.6]
\fill[fill=gray!20] (-6,0) ellipse (3 and 2);
\draw (-6,0) node{$\Omega$};
\draw (-6,0) ellipse (3 and 2);
\draw[dashed] (-6,0) circle(3);
\filldraw (-9,0) circle (2pt) node[left]{$\sigma$};
\fill[fill=gray!20] (6,0) ellipse (3 and 2);
\draw (6,0) node{$\Omega$};
\draw (6,0) ellipse (3 and 2);
\draw[dashed] (6,-5/2) circle(9/2);
\filldraw (6,2) circle (2pt) node[above]{$\sigma'$};
\draw[ dotted] (6, -5/2) -- (3/2, -5/2) node[midway, below]{\tiny $R_\Omega(\sigma')$};
\end{tikzpicture}
\caption{A domain $\Omega$ with two circles corresponding to values $R_\Omega(\sigma)$ and $R_\Omega(\sigma')$}
\label{fig:fig6}
\end{figure}

\begin{lemma} The function
$R_\Omega:\partial\Omega\to(0,\infty]$ is lower semicontinuous.
In particular, it is Borel measurable.
\end{lemma}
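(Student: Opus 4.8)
The plan is to show that $R_\Omega$ is lower semicontinuous by verifying that for every $c > 0$ the set $\{\sigma \in \partial\Omega : R_\Omega(\sigma) > c\}$ is open in $\partial\Omega$, or equivalently that $\{\sigma : R_\Omega(\sigma) \le c\}$ is closed. So suppose $(\sigma_k)$ is a sequence in $\partial\Omega$ with $R_\Omega(\sigma_k) \le c$ for all $k$, and $\sigma_k \to \sigma_*$. For each $k$ let $\Delta_k$ be the smallest closed disk containing $\Omega$ with $\sigma_k \in \partial\Delta_k$; its radius $r_k := R_\Omega(\sigma_k)$ lies in the interval $[r_0, c]$, where $r_0 > 0$ is, say, half the diameter of $\Omega$ (any disk containing $\Omega$ has radius at least $r_0$). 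The centers $m_k$ of the $\Delta_k$ lie at distance $r_k \le c$ from the point $\sigma_k$, which ranges over the compact set $\partial\Omega$; hence the $m_k$ lie in a bounded set. Passing to a subsequence, $m_k \to m_*$ and $r_k \to r_* \in [r_0, c]$.

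Next I would check that the limiting disk $\Delta_* := \overline{B}(m_*, r_*)$ contains $\Omega$ and has $\sigma_* \in \partial\Delta_*$. The containment $\Omega \subset \Delta_*$ is immediate: each $z \in \Omega$ satisfies $|z - m_k| \le r_k$, and letting $k \to \infty$ gives $|z - m_*| \le r_*$. For the boundary condition, $|\sigma_k - m_k| = r_k$ for each $k$, and passing to the limit gives $|\sigma_* - m_*| = r_*$, so $\sigma_* \in \partial\Delta_*$. (Here one uses $r_* > 0$ so that $\Delta_*$ is a genuine disk.) Thus there exists a closed disk through $\sigma_*$ containing $\Omega$, so $R_\Omega(\sigma_*)$ is finite and, by its definition as the smallest such radius, $R_\Omega(\sigma_*) \le r_* \le c$. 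This shows $\{\sigma : R_\Omega(\sigma) \le c\}$ is closed, hence $R_\Omega$ is lower semicontinuous. Lower semicontinuous functions are Borel measurable, since each superlevel set $\{R_\Omega > c\}$ is open and these generate the relevant $\sigma$-algebra.

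There is no serious obstacle here; the only points requiring a little care are the two a priori bounds that make the compactness argument go through — namely that $r_k$ is bounded below away from $0$ (so the limiting disk does not degenerate to a point, which is what guarantees $\sigma_* \in \partial\Delta_*$ rather than merely $\sigma_* \in \Delta_*$) and bounded above by the hypothesis $r_k \le c$ (so that the centers $m_k$ stay in a bounded region and a convergent subsequence exists). Both are elementary consequences of $\Omega$ having nonempty interior and being bounded. One should also note explicitly that when $R_\Omega(\sigma) = \infty$ for every $\sigma$ — e.g.\ a polygon — the statement is vacuous, and more generally the set $\{R_\Omega = \infty\}$ causes no trouble since it is exactly the intersection $\bigcap_{c>0}\{R_\Omega > c\}$ of open sets.
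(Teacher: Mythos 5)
Your proof is correct and uses essentially the same compactness argument as the paper: extract a convergent subsequence of the enclosing disks and check that the limiting disk still contains $\Omega$ and passes through the limit boundary point. The only cosmetic difference is that you phrase lower semicontinuity via closed sublevel sets and let the radii $r_k$ vary (extracting a convergent subsequence of radii as well), whereas the paper fixes a single radius $L' > \liminf R_\Omega(\sigma_n)$, inflates each minimal disk to that common radius, and thereby only needs the centers to subconverge; both routes are standard and equally valid.
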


\begin{proof}
Let $\sigma\in\partial\Omega$ and let $(\sigma_n)$ be a sequence in $\partial\Omega$
such that $\sigma_n\to\sigma$. We need to show that $\liminf_{n\to\infty}R_\Omega(\sigma_n)\ge R_\Omega(\sigma)$.
We can suppose that $L:=\liminf_{n\to\infty}R_\Omega(\sigma_n)<\infty$, otherwise there is nothing to prove.
Let $L'>L$. Then, replacing $(\sigma_n)$ by a subsequence, we can suppose that $R_\Omega(\sigma_n)<L'$ for all~$n$. 
Thus, for each $n$, there exists a closed disk $\Delta_n$ of radius $L'$ such that $\Omega\subset\Delta_n$ and
$\sigma_n\in\partial\Delta_n$. The sequence of centres $(c_n)$ of the disks $\Delta_n$ is bounded, so there exists a
convergent subsequence $c_{n_j}\to c$. Let $\Delta$ be the closed disk with centre $c$ and radius $L'$.
Then we have $\Omega\subset\Delta$ and $\sigma\in\partial\Delta$. It follows that $R_\Omega(\sigma)\le L'$.
As this last inequality holds for all $L'>L$, we deduce that $R_\Omega(\sigma)\le L$.
This completes the proof.
\end{proof}
We set 
\begin{equation}\label{E:InfimumMeasure}
d\nu:= \frac{ds}{2 \pi R_\Omega}.
\end{equation}
By the above lemma, $\nu$ is a non-negative Borel measure on $\partial \Omega$. For any $\zeta \in \partial \Omega$, we have 
\begin{equation} \label{E:MuZetaNuIneq}
\mu_\zeta \geq \nu.
\end{equation} To see this, note that if $\sigma$ is not a corner and $R_{\zeta,\sigma}$ is the radius of the unique circle tangent to $\partial \Omega$ at $\sigma$ and passing through $\zeta$, then $R_\Omega(\sigma) \geq R_{\zeta, \sigma}$. Therefore, according to \eqref{E:DensityCircleRadiusFormula}, 
\[
\frac{1}{2 \pi R_\Omega(\sigma)} \leq \frac{1}{2 \pi R_{\zeta, \sigma}} = \rho_\zeta(\sigma)
\] for almost every $\sigma$ with respect to arclength measure on $\partial \Omega$. Inequality \eqref{E:MuZetaNuIneq} follows. Although we shall skip a formal proof, we mention also that $\nu$ is in fact the largest measure satisfying $\mu_\zeta \geq \nu$ for all $\zeta \in \partial \Omega$. This maximality property of $\nu$ is to be interpreted in the following sense: if $\nu'$ is any measure satisfying $\mu_\zeta \geq \nu'$ for all $\zeta$, then $\nu \geq \nu'$. 

By our earlier discussion, we obtain the following upper estimate for the configuration constant:
\[
c(\Omega) \leq  1-\frac{1}{2\pi}\int_{\partial\Omega}\frac{ds}{R_\Omega}.
\]
Note that this is precisely the assertion of Theorem~\ref{T:ROmegaTheorem} stated in Section~\ref{S:IntroSec}.

We will now mention some consequences. Recall that if $\gamma$ is a plane curve of class $C^2$, then the \textit{radius of curvature} of $\gamma$ is the reciprocal of its curvature.

\begin{corollary}
If $\Omega$ has a $C^2$-boundary of length $L$, whose radius of curvature
is everywhere at most $\rho$,
then \[c(\Omega) \leq 1-\frac{L}{2\pi\rho}.\]
\end{corollary}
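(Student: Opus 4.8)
The plan is to obtain this as a quick consequence of Theorem~\ref{T:ROmegaTheorem}. That theorem gives $c(\Omega)\le 1-\frac{1}{2\pi}\int_{\partial\Omega}\frac{ds}{R_\Omega}$, so it is enough to prove the pointwise inequality $R_\Omega(\sigma)\le\rho$ for every $\sigma\in\partial\Omega$. Indeed, granting this we have $1/R_\Omega\ge 1/\rho$ on $\partial\Omega$, hence $\int_{\partial\Omega}\frac{ds}{R_\Omega}\ge\frac{L}{\rho}$, and substituting into Theorem~\ref{T:ROmegaTheorem} yields exactly $c(\Omega)\le 1-\frac{L}{2\pi\rho}$.

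So everything reduces to the bound $R_\Omega(\sigma)\le\rho$. First note that the hypothesis says precisely that the curvature of $\partial\Omega$ satisfies $\kappa\ge 1/\rho$ everywhere; in particular $\partial\Omega$ is strictly convex and contains no line segment, so the containing disk in the definition of $R_\Omega(\sigma)$ exists and $R_\Omega(\sigma)<\infty$ for every $\sigma$. The statement that $\Omega$ lies in a closed disk of radius $\rho$ passing through a prescribed boundary point $\sigma$ — which is exactly $R_\Omega(\sigma)\le\rho$ — is one half of Blaschke's rolling theorem (a convex body whose boundary curvature is everywhere at least $1/\rho$ ``rolls freely inside'' a disk of radius $\rho$), so the shortest route is to invoke that classical fact. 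If one wants a self-contained argument, fix $\sigma_0$, let $N_0$ be the outward unit normal there, and let $\Delta$ be the closed disk of radius $\rho$ with centre $c:=\sigma_0-\rho N_0$, which is internally tangent to $\partial\Omega$ at $\sigma_0$; it suffices to show $\Omega\subseteq\Delta$. Parametrizing $\partial\Omega$ by tangent angle $\phi$ (so $\gamma'(\phi)=R_c(\phi)T(\phi)$, with $R_c=1/\kappa\le\rho$ the radius of curvature and $T$ the unit tangent), one computes $\gamma(\phi)-c=v(\phi)+\rho N(\gamma(\phi))$ with $v(\phi):=\int_{\phi_0}^{\phi}(R_c(\psi)-\rho)T(\psi)\,d\psi$, whence $|\gamma(\phi)-c|^2-\rho^2=|v(\phi)|^2+2\rho\langle v(\phi),N(\gamma(\phi))\rangle$. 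After dividing by $\rho^2$ this has the form $|z(\phi)-i|^2-1$ for the explicit $z(\phi):=\int_0^{\phi-\phi_0}e^{it}g(\phi-t)\,dt$ with $g:=1-R_c/\rho\in[0,1)$, and one checks $|z(\phi)-i|\le1$: for $\phi-\phi_0\le\pi$, writing $i=\tfrac12\int_0^\pi e^{it}\,dt$ and extending $g$ by zero, this follows from $\bigl|\int_0^\pi(g(t)-\tfrac12)e^{it}\,dt\bigr|\le\tfrac12\int_0^\pi|\cos(t-\theta)|\,dt=1$, valid for every $\theta$ since $|\cos|$ has period $\pi$ and integral $2$ over a period; the range $\phi-\phi_0\in(\pi,2\pi)$ is reduced to this by the $2\pi$-periodicity of $\partial\Omega$. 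Hence $\Omega\subseteq\Delta$ and $R_\Omega(\sigma_0)\le\rho$.

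The only substantive ingredient is the estimate $R_\Omega(\sigma)\le\rho$; the reduction to it and the final integration are routine. Expect this to be the main obstacle: it is genuinely Blaschke's rolling theorem, and in a direct proof the delicate point is that a naive chord-versus-arc estimate along the arc of $\partial\Omega$ between the two contact points of the circumscribed disk only gives $R_\Omega(\sigma)\le\tfrac{\pi}{2}\rho$, so one must instead track the position of $\gamma$ precisely, as above.
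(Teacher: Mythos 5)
Your reduction is exactly the paper's: apply Theorem~\ref{T:ROmegaTheorem} after establishing the pointwise bound $R_\Omega(\sigma)\le\rho$, then integrate. The difference is that the paper treats this bound as immediate (``one sees from \eqref{E:RSigmaDefIntro} and \eqref{E:RCurvDefIntro} that $R_\Omega(\sigma)\le\rho$''), which is really a gap: \eqref{E:RCurvDefIntro} only gives $R_\Omega(\sigma)\ge 1/\kappa(\sigma)$, and on an ellipse one already has $R_\Omega(\sigma)$ at the end of the major axis strictly larger than $1/\kappa(\sigma)$ there, so the inequality $R_\Omega(\sigma)\le\rho$ needs a genuinely global argument. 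You correctly identify it as the outer case of Blaschke's rolling theorem, and your self-contained derivation is sound: parametrizing by tangent angle gives $\gamma(\phi)-c = v(\phi)+\rho N(\gamma(\phi))$ with $v=\int_{\phi_0}^\phi(R_c-\rho)T$, the identity $|\gamma-c|^2-\rho^2=\rho^2\bigl(|v/\rho+N|^2-1\bigr)$ reduces the containment $\Omega\subset\Delta$ to $|z(\phi)-i|\le1$, and your estimate $\bigl|\int_0^\pi(G-\tfrac12)e^{it}\,dt\bigr|\le\tfrac12\int_0^\pi|\cos(t-\theta)|\,dt=1$ (using $0\le G<1$ and that $|\cos|$ has period $\pi$ and integral $2$ over a period) closes the case $\phi-\phi_0\le\pi$, with $2\pi$-periodicity of $v$ handling the rest. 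Your closing remark is also on point: the naive tangent-turning estimate only gives $R_\Omega(\sigma)\le(\pi/2)\rho$, which is why the sharper argument is needed. In short, you take the paper's route but supply the proof of the one nontrivial ingredient that the paper leaves implicit.
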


\begin{proof}
In this case, one sees from \eqref{E:RSigmaDefIntro} and \eqref{E:RCurvDefIntro} that $R_\Omega(\sigma)\le \rho$ for all $\sigma\in\partial\Omega$,
from which the result follows.
\end{proof}

This last result was already known. See for example 
\cite[pp.45--46]{Ga64} and \cite[pp.128--129]{KK58}.
However the proofs
in these references are quite different from the one above.

\begin{corollary}\label{T:sector}
Consider a convex circular sector \[ \Omega = \{z\in\CC:0\le |z|\le r,\, 0\le\arg(z)\le\theta\}, \] where $r>0$ and $0<\theta\le\pi$.
Then
\[
c(\Omega) \leq 1-\frac{\theta}{2\pi}.
\]
\end{corollary}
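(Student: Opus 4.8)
The plan is to deduce the estimate directly from Theorem~\ref{T:ROmegaTheorem}, which gives
\[
c(\Omega)\le 1-\frac{1}{2\pi}\int_{\partial\Omega}\frac{ds}{R_\Omega},
\]
so that it suffices to show $\int_{\partial\Omega}\frac{ds}{R_\Omega}\ge\theta$. First I would describe the boundary of the sector: $\partial\Omega$ consists of the two radial segments $\{te^{i0}:0\le t\le r\}$ and $\{te^{i\theta}:0\le t\le r\}$, together with the circular arc $\Gamma:=\{re^{it}:0\le t\le\theta\}$, which has arclength $r\theta$. The three corner points $0$, $r$ and $re^{i\theta}$ form a finite set, hence have zero arclength measure and may be ignored in the integral.

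On the interior of each radial segment, $R_\Omega(\sigma)=\infty$ (a line segment in $\partial\Omega$ admits no enclosing disk tangent there), so those portions contribute $0$ to $\int_{\partial\Omega}ds/R_\Omega$. The key observation is that for any $\sigma\in\Gamma$ that is not a corner, the closed disk $\Delta:=\overline{B(0,r)}$ satisfies $\Omega\subset\Delta$ and $\sigma\in\partial\Delta$ (since every point of $\Omega$ has modulus at most $r$, while $|\sigma|=r$). By the definition of $R_\Omega(\sigma)$ as the radius of the \emph{smallest} enclosing disk through $\sigma$, this yields $R_\Omega(\sigma)\le r$, hence $1/R_\Omega(\sigma)\ge 1/r$ on $\Gamma$.

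Putting these together,
\[
\int_{\partial\Omega}\frac{ds}{R_\Omega}\ \ge\ \int_{\Gamma}\frac{ds}{R_\Omega}\ \ge\ \int_{\Gamma}\frac{ds}{r}\ =\ \frac{r\theta}{r}\ =\ \theta,
\]
and substituting into the estimate of Theorem~\ref{T:ROmegaTheorem} gives $c(\Omega)\le 1-\theta/(2\pi)$, as claimed. There is really no serious obstacle here; the only point requiring a moment's care is that the corners (where $R_\Omega$ might behave differently) are a null set and play no role, and that the hypothesis $\theta\le\pi$ is exactly what guarantees $\Omega$ is convex so that Theorem~\ref{T:ROmegaTheorem} applies.
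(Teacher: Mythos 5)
Your proof is correct and takes essentially the same route as the paper: apply Theorem~\ref{T:ROmegaTheorem} and compute the integral by noting that $R_\Omega=\infty$ on the radial segments while $R_\Omega\le r$ on the arc. The only cosmetic difference is that the paper asserts $R_\Omega(\sigma)=r$ exactly on the curved part, whereas you observe only the (sufficient) inequality $R_\Omega(\sigma)\le r$; either way the integral estimate comes out the same.
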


\begin{proof}
It is obvious that $R_\Omega(\sigma)=r$ for $\sigma$ in the curved
part of $\partial\Omega$, and that $R_\Omega(\sigma)=\infty$
elsewhere. Hence
\[
\frac{1}{2\pi}\int_{\partial\Omega}\frac{ds}{R_\Omega}
=\frac{1}{2\pi}\frac{r\theta}{r}=\frac{\theta}{2\pi}.
\]
The result now follows from Theorem~\ref{T:ROmegaTheorem}.
\end{proof}

\subsection{Analytic configuration constants of quadrilaterals}

Theorem~\ref{T:Maintheorem2} shows that $a(\Omega) < 1$ for every $\Omega$. Here we show by example that $a(\Omega)$ may be arbitrarily close to $1$. Figure~\ref{fig:fig5} shows a narrow quadrilateral domain for which this phenomenon occurs.

\begin{proposition}
For $\epsilon>0$, let $\Omega_\epsilon$ be the convex hull of $\{\pm1,\,\pm\epsilon i\}$. Then
\[
a(\Omega_\epsilon)\ge 1-(4/\pi)\epsilon.
\]
\end{proposition}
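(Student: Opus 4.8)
The goal is a lower bound on $a(\Omega_\epsilon)$, i.e.\ to exhibit a function $f\in\A(\Omega_\epsilon)$ with $\|f+\CC\mathbf 1\|_{\partial\Omega_\epsilon}$ small (normalized to $1$, say) but $\|K_{\Omega_\epsilon}f+\CC\mathbf 1\|_{\partial\Omega_\epsilon}$ close to $1$. The natural candidate is a linear function, $f(z)=z$, or a suitable affine normalization of it. Since $\Omega_\epsilon$ is the thin quadrilateral with vertices $\pm 1,\pm\epsilon i$, the image $f(\partial\Omega_\epsilon)$ is $\partial\Omega_\epsilon$ itself, whose smallest enclosing disk has radius essentially $1$ (the diameter is $2$, realized by $\pm 1$), so after rescaling by $1$ we have $\|f+\CC\mathbf 1\|_{\partial\Omega_\epsilon}=1$. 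The point is then to show that $K_{\Omega_\epsilon}f$ — equivalently, using \eqref{E:uKf-boundaryEq}, the double-layer potential $u=f+K_{\Omega_\epsilon}f$ evaluated on the boundary — has image contained in a disk only slightly smaller than the unit disk, so that $\|K_{\Omega_\epsilon}f+\CC\mathbf 1\|_{\partial\Omega_\epsilon}\ge 1-O(\epsilon)$.

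First I would compute $K_{\Omega_\epsilon}f$ at the two ``tips'' $\zeta=\pm 1$ and at the two ``flat'' vertices $\zeta=\pm\epsilon i$, using the corner formula \eqref{E:MuZetaEq}: at a corner $\zeta$ with aperture $\theta_\zeta$, $K_\Omega f(\zeta)=(1-\theta_\zeta/\pi)f(\zeta)+\int f\,\rho_\zeta\,ds$. As $\epsilon\to 0$ the tip angles $\theta_{\pm 1}\to 0$ so the point-mass coefficient $1-\theta_{\pm1}/\pi\to 1$; thus $K_{\Omega_\epsilon}f(\pm1)\approx f(\pm1)=\pm1$, up to an error controlled by $\theta_{\pm1}=O(\epsilon)$ and by the size of $\rho_{\pm1}$. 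Meanwhile $u(\pm1)=f(\pm1)+K_{\Omega_\epsilon}f(\pm1)\approx \pm 2$ is not what we want directly; rather I want to track $K_\Omega f$ itself. Evaluating $K_{\Omega_\epsilon}f(1)$ and $K_{\Omega_\epsilon}f(-1)$, the two values are approximately $+1$ and $-1$, so their midpoint is near $0$ and any enclosing disk of the image of $K_{\Omega_\epsilon}f$ must have radius at least $|K_{\Omega_\epsilon}f(1)-K_{\Omega_\epsilon}f(-1)|/2\approx 1-O(\epsilon)$. Making the error term explicit and showing it is bounded by $(4/\pi)\epsilon$ is the substance of the argument.

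The key estimate is therefore a bound on $\int_{\partial\Omega_\epsilon}f\,\rho_{\pm1}\,ds=\int_{\partial\Omega_\epsilon}f\,d\mu_{\pm1}-(1-\theta_{\pm1}/\pi)f(\pm1)$, and on the aperture angle $\theta_{\pm1}$. The aperture at the vertex $1$ of $\Omega_\epsilon$ is the angle between the two edges from $1$ to $\pm\epsilon i$; since those edges make angles $\arctan\epsilon$ with the real axis, $\theta_1=2\arctan\epsilon\le 2\epsilon$, hence $1-\theta_1/\pi\ge 1-2\epsilon/\pi$. Since $\|f\|_{\partial\Omega_\epsilon}=1$ and $\mu_1$ is a probability measure, $|K_{\Omega_\epsilon}f(1)-f(1)|=|\int(f-f(1))\,d\mu_1|\le (\mu_1\text{-mass off }\{1\})\cdot\mathrm{diam}(\Omega_\epsilon)=(\theta_1/\pi)\cdot 2\le (4/\pi)\epsilon$; the same at $-1$. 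Therefore $|K_{\Omega_\epsilon}f(1)-K_{\Omega_\epsilon}f(-1)|\ge |f(1)-f(-1)|-|K_{\Omega_\epsilon}f(1)-f(1)|-|K_{\Omega_\epsilon}f(-1)-f(-1)|\ge 2-2\cdot(4/\pi)\epsilon$, so $\|K_{\Omega_\epsilon}f+\CC\mathbf 1\|_{\partial\Omega_\epsilon}\ge 1-(4/\pi)\epsilon$, while $\|f+\CC\mathbf 1\|_{\partial\Omega_\epsilon}=1$ because $\pm1\in f(\partial\Omega_\epsilon)$ forces the enclosing radius to be at least $1$ and $\Omega_\epsilon\subset\overline{\DD}$ gives the reverse. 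Since $f(z)=z\in\A(\Omega_\epsilon)$, this yields $a(\Omega_\epsilon)\ge 1-(4/\pi)\epsilon$.

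**Main obstacle.** The only delicate point is pinning down the constant $4/\pi$ exactly rather than some cruder $O(\epsilon)$: this requires being careful that $\mathrm{diam}(\Omega_\epsilon)=2$ and that the point-mass deficit of $\mu_1$ is exactly $\theta_1/\pi$ with $\theta_1=2\arctan\epsilon\le 2\epsilon$, and that both tip errors are estimated in the same direction so they add rather than potentially cancel favorably. One should also double-check that $\|f+\CC\mathbf 1\|_{\partial\Omega_\epsilon}$ is exactly $1$ (not slightly less), which follows since $\{1,-1\}\subset f(\partial\Omega_\epsilon)$ are antipodal on $\partial\overline{\DD}$ and $\Omega_\epsilon\subset\overline{\DD}$, so by Lemma~\ref{L:arcLemma} the minimal enclosing disk of $f(\partial\Omega_\epsilon)$ is exactly $\overline{\DD}$. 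Everything else is a routine estimate using that each $\mu_\zeta$ is a probability measure and $\|f\|_{\partial\Omega_\epsilon}=1$.
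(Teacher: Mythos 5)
Your proof is correct and reaches the stated bound by the same underlying mechanism as the paper: the Neumann--Poincar\'e measure $\mu_{\pm 1}$ places mass $1-\theta_{\pm 1}/\pi$ at the sharp tip $\pm 1$, and $\theta_1 = 2\arctan\epsilon \le 2\epsilon$ makes that mass nearly the whole of $\mu_{\pm 1}$, so $K_{\Omega_\epsilon}f(\pm 1)$ stays close to $f(\pm 1)=\pm 1$. The one genuine (small) difference is the test function. The paper takes $f$ to be a Carath\'eodory-extended conformal map of $\Omega_\epsilon$ onto $\overline{\DD}$ normalized so $f(\pm 1)=\pm 1$, and then works with $\Re K_{\Omega_\epsilon}f(\pm 1)$, using $\Re f \ge -1$ on $\partial\Omega_\epsilon$; you take $f(z)=z$ and bound $|K_{\Omega_\epsilon}f(\pm 1) - f(\pm 1)| \le (\theta_1/\pi)\cdot\operatorname{diam}(\Omega_\epsilon) = 2\theta_1/\pi$ directly. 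Both estimates are identical numerically (per tip the error is $2\theta_1/\pi \le (4/\pi)\epsilon$, and dividing the two-endpoint separation by $2$ gives the radius lower bound $1-(4/\pi)\epsilon$), and your normalization $\|f+\CC\mathbf{1}\|_{\partial\Omega_\epsilon}=1$ is correctly justified via the antipodal pair $\pm 1$ and $\Omega_\epsilon\subset\overline{\DD}$. Your choice $f(z)=z$ is slightly more elementary since it avoids the conformal map and Carath\'eodory's extension theorem, but otherwise this is the same argument.
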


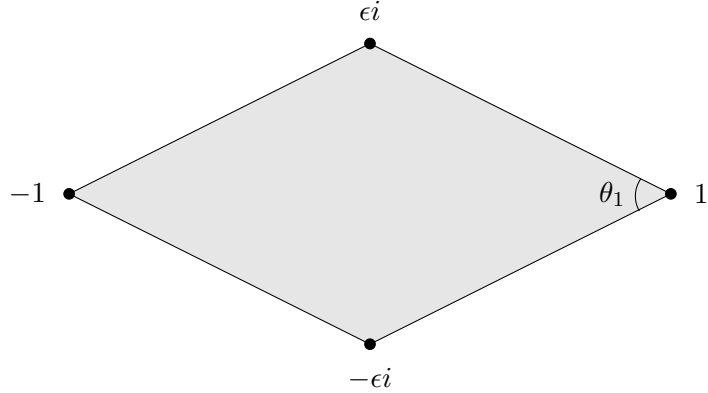
\begin{figure}
\centering
\begin{tikzpicture}[scale=4]
\draw[fill=gray!20] (-1,0)--(0,1/2)--(1,0)--(0,-1/2)--(-1,0);
\filldraw (-1,0) circle(0.5pt) node[left, xshift=-5pt]{$-1$};
\filldraw (1,0) circle(0.5pt) node[right, xshift=5pt]{$1$};
\filldraw (0,1/2) circle(0.5pt) node[above, yshift=5pt]{$\epsilon i$};
\filldraw (0,-1/2) circle(0.5pt) node[below, yshift=-5pt]{$-\epsilon i$};
\draw (0.9, 0.05) arc(145:210:0.1);
\draw (0.9,0) node[left,xshift=-2 pt]{$\theta_1$};
\end{tikzpicture}
\caption{A quadrilateral domain $\Omega_\epsilon$ with $a(\Omega_\epsilon) > 1 - (4/\pi)\epsilon$.}
\label{fig:fig5}
\end{figure}

\begin{proof}
Let $f$ be a conformal mapping of the interior of $\Omega_\epsilon$ onto the unit disk $\DD$. By Carath\'eodory's theorem, $f$ extends to a homeomorphism of $\Omega_\epsilon$ onto $\overline{\DD}$, and so clearly $f\in A(\Omega)$.
Post-composing with a suitable automorphism of $\overline{\DD}$, we may further suppose that
 $f(1)=1$ and $f(-1)=-1$. 

Consider $\zeta=1$. Recalling \eqref{E:MuZetaEq}, we have $\mu_1=(1-\theta_1/\pi)\delta_1+(\theta_1/\pi)\nu$, where $\theta_1$ is the angle of the
aperture of $\partial\Omega_\epsilon$ at $1$, and $\nu$ is a probability measure on $\partial\Omega_\epsilon\setminus\{1\}$. It follows that
\begin{align*}
\Re (K_{\Omega_\epsilon}f)(1)
&= \int_{\partial\Omega_\epsilon}(\Re f)\,d\mu_1\\
&=(1-\theta_1/\pi)\Re f(1)+(\theta_1/\pi)\int_{\partial\Omega_\epsilon\setminus\{1\}}(\Re f)\,d\nu\\
&\ge (1-\theta_1/\pi)(1)+(\theta_1/\pi)(-1)\\
&=1-2\theta_1/\pi.
\intertext{Likewise}
\Re (K_{\Omega_\epsilon}f)(-1)&\le -(1-2\theta_1/\pi).
\end{align*}
It follows that the diameter of $(K_{\Omega_\epsilon}f)(\Omega)$ is at least $2(1-2\theta_1/\pi)$,
whence $a(\Omega)\ge (1-2\theta_1/\pi)$. 

Finally, by  trigonometry, $\theta_1$ is related to $\epsilon$ by $\tan(\theta_1/2)=\epsilon$,
whence $\theta_1=2\arctan\epsilon\le2\epsilon$. The result follows.
\end{proof}

\subsection{Configuration constants equal to zero} 

Recall the estimate \eqref{E:PolFuncCalcBoundAOmega} from Section~\ref{S:IntroSec}, which will be proved in the next section. The estimate is strongest if $a(W) = 0$, and in this case we reach the conjectured bound $K = 2$. Unfortunately, the only domain $W$ for which we have $a(W) = 0$ is a disk, and in this case \eqref{E:PolFuncCalcBoundAOmega} reduces to the well-known Okubo-Ando bound from \cite{OA75}. For completeness we give a proof of the statement that $a(\Omega) = 0$ if and only if $\Omega$ is a disk. More precisely, we have the following.

\begin{proposition} Let $\Omega$ be a compact convex domain with non-empty interior. The following are equivalent:
\begin{enumerate}[(i)]
\item $\Omega$ is a disk,
\item $c(\Omega) = 0$,
\item $a(\Omega) = 0$. 
\end{enumerate}
\end{proposition}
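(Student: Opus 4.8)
The three implications naturally form the cycle (i)$\Rightarrow$(ii)$\Rightarrow$(iii)$\Rightarrow$(i), and all the work is in the last one. For (i)$\Rightarrow$(ii): since the configuration constant is translation-invariant and a disk of radius $r$ is a translate of $\Omega_{r,r}$, it suffices to take $\Omega=\Omega_{r,r}$. Then Lemma~\ref{L:density} with $a=b=r$ gives $A=1$ and $B=0$, so $d\mu_{\gamma(s)}(\gamma(t))=\tfrac{1}{2\pi}\,dt$ does not depend on $s$; hence all the measures $\mu_\zeta$ coincide and $c(\Omega)=0$ by \eqref{E:RconfigConstantFormula} together with Theorem~\ref{T:MainTheorem1} (alternatively, substitute $a=b$ into Proposition~\ref{P:EllipseConst}). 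The implication (ii)$\Rightarrow$(iii) is immediate from $a(\Omega)\le c(\Omega)$.

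For (iii)$\Rightarrow$(i) I would test the hypothesis $a(\Omega)=0$ on the single function $f(z)=z\in\A(\Omega)$, which forces $K_\Omega f$ to equal a constant $c_0$ on $\partial\Omega$. The first step is to record an explicit formula for the double-layer potential of an analytic $f$: splitting $\Re(w)=\tfrac12(w+\bar w)$ in \eqref{E:DLformula}, using $N(\sigma)\,ds=-i\,d\sigma$ and Cauchy's formula for the holomorphic part, one obtains, for $z\in\Omega^o$,
\[
u(z)=f(z)+\overline{w(z)},\qquad w(z):=\frac{1}{2\pi i}\int_{\partial\Omega}\frac{\overline{f(\sigma)}}{\sigma-z}\,d\sigma .
\]
Because $u$ extends continuously to $\partial\Omega$ with $u(\zeta)=f(\zeta)+K_\Omega f(\zeta)$ by \eqref{E:uKf-boundaryEq}, the function $w$ extends continuously to $\Omega$ with boundary value $\overline{c_0}$, and the maximum modulus principle forces $w\equiv\overline{c_0}$ on $\Omega$. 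With $f(z)=z$, so that $\overline{f(\sigma)}=\bar\sigma$ on $\partial\Omega$, this says $\tfrac{1}{2\pi i}\int_{\partial\Omega}\tfrac{\bar\sigma}{\sigma-z}\,d\sigma=\overline{c_0}$ for all $z\in\Omega^o$.

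Next I would pass to the exterior. Put $w_e(z):=\tfrac{1}{2\pi i}\int_{\partial\Omega}\tfrac{\bar\sigma}{\sigma-z}\,d\sigma$ for $z$ in the connected set $D:=\widehat{\CC}\setminus\Omega$; this is analytic on $D$, vanishes at $\infty$, and expanding in powers of $1/z$ together with $\int_{\partial\Omega}\bar\sigma\,d\sigma=2iA$ (Green's theorem, $A$ being the area of $\Omega$) gives $w_e(z)=-\tfrac{A}{\pi z}+O(z^{-2})$ as $z\to\infty$. The exterior jump relation for the double-layer potential — the outer analogue of \eqref{E:uKf-boundaryEq}, recorded in Appendix~\ref{sec:appendixA}, equivalently Plemelj's formula for the Cauchy integral — shows that $w_e$ extends continuously to $\partial\Omega$ with boundary value $\overline{c_0}-\bar\zeta$. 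Now set $\phi(z):=-(z-c_0)\,w_e(z)$. From the expansion at $\infty$, $\phi$ extends analytically across $\infty$ with $\phi(\infty)=A/\pi$, so it is continuous on the compact set $\overline D\subset\widehat{\CC}$; on $\partial\Omega$ its value is $-(\zeta-c_0)(\overline{c_0}-\bar\zeta)=|\zeta-c_0|^2\in\RR$. Hence $\Im\phi$ is harmonic on $D$, continuous on $\overline D$, and vanishes on $\partial D=\partial\Omega$, so $\Im\phi\equiv0$ by the maximum principle; thus $\phi$ is a real-valued analytic function on the domain $D$, therefore constant, equal to $A/\pi$. Consequently $|\zeta-c_0|^2=A/\pi$ for every $\zeta\in\partial\Omega$, i.e.\ $\partial\Omega$ is a circle and $\Omega$ is a disk.

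The analytic manipulations above — the double-layer formula, the expansion of $w_e$ at infinity via Green's theorem, the maximum-principle step — are routine. The point that needs care, and which I regard as the main obstacle, is the boundary regularity of the exterior Cauchy integral $w_e$: that it extends continuously up to $\partial\Omega$ with the asserted values. I would settle this by quoting the classical jump relations for the double-layer potential on the rectifiable convex boundary $\partial\Omega$; alternatively, working only almost everywhere with respect to arclength, one invokes Plemelj's formula to identify the non-tangential limits of $w_e$, combines this with the fact that a bounded holomorphic function on $D$ having real non-tangential boundary values a.e.\ on the rectifiable curve $\partial\Omega$ must be constant (conformally mapping $D$ onto $\DD$ and using that the map preserves null sets, as in \cite[Ch.~VI]{GM05}), and then uses continuity of $\partial\Omega$ to upgrade ``$|\zeta-c_0|^2=A/\pi$ for a.e.\ $\zeta$'' to ``for every $\zeta$''.
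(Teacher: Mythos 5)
Your implications (i)$\Rightarrow$(ii)$\Rightarrow$(iii) are fine, though the paper gets (i)$\Rightarrow$(ii) directly from the geometric formula \eqref{E:DensityCircleRadiusFormula} rather than the ellipse computation. The real comparison is in (iii)$\Rightarrow$(i), where your route is genuinely different from the paper's. The paper shows that $a(\Omega)=0$ forces \emph{all} the measures $\mu_\zeta$ to coincide: pushing forward to $\partial\DD$ by a conformal map, each difference $\mu^\phi_\zeta-\mu^\phi_{\zeta'}$ annihilates $\A(\DD)$, so by the F.\ and M.\ Riesz theorem it is $h\,ds$ with $h$ having only strictly positive Fourier modes; but $h$ is real, so $h\equiv0$. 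Then, for a fixed non-corner $\sigma$, all tangent circles $R_{\zeta,\sigma}$ coincide by \eqref{E:DensityCircleRadiusFormula}, forcing $\partial\Omega$ to be a circle. By contrast, you test the hypothesis against the single function $f(z)=z$, conclude that the holomorphic half $w$ of the double-layer potential is the constant $\bar c_0$ inside, and then run an exterior maximum-principle argument on $\phi(z)=-(z-c_0)w_e(z)$ to get $|\zeta-c_0|^2=A/\pi$ on $\partial\Omega$. This is elegant and arguably sharper: it shows $a(\Omega)=0$ can be detected by a single test function, whereas the paper's duality argument treats all of $\A(\Omega)$ at once. It is also more elementary in the tools it uses (Cauchy's formula, Green's theorem, the maximum principle) versus F.\ and M.\ Riesz plus conformal transport. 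What the paper's approach buys is robustness: it works purely at the level of measures and duality, requiring no boundary behaviour of Cauchy integrals whatsoever, which is precisely where your argument has to work harder.

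On that technical point you flag yourself, one further caution is warranted. Your route (a), appealing to the exterior jump relation for the double-layer potential on a convex boundary, is sound — it is the exact exterior analogue of \eqref{E:uKf-boundaryEq}, provable by the same arguments as in Appendix~\ref{sec:appendixA}, and once you have it, $w_e$ (hence $\phi$) extends continuously to $\overline D$ and the maximum principle applies with no further fuss. Your fallback route (b), however, is not airtight as stated: you invoke ``a bounded holomorphic function on $D$ with real non-tangential boundary values a.e.'' without establishing that $\phi$ is bounded near $\partial\Omega$. A priori $w_e$ is only the Cauchy integral of a bounded density on a convex (hence Lipschitz, possibly cornered) curve, which need not be bounded pointwise up to the boundary; the $H^\infty$/harmonic-measure argument you sketch genuinely requires some Hardy-space membership for $\Im\phi$, and that requires essentially the same boundary regularity you were trying to avoid. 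So if you want a complete proof you should commit to route (a), or else replace ``bounded'' in (b) with an explicit Hardy-space estimate for the Cauchy transform on chord-arc curves. Finally, a small presentational point: the boundary value $w_e^-(\zeta)=\bar c_0-\bar\zeta$ is most naturally obtained from the exterior double-layer jump $u_-=-f+K_\Omega f$ rather than from Plemelj applied directly to the Cauchy integral of $\bar\sigma$; the two are equivalent here, but the former is what the paper's Appendix~\ref{sec:appendixA} is set up to deliver.
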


\begin{proof}
In the case that $\Omega$ is a disk, then  \eqref{E:DensityCircleRadiusFormula} implies readily that $\rho_\zeta(\sigma)$ is a constant independent of $\zeta$, and so for every $\zeta \in \partial \Omega$, the measure $\mu_\zeta$ is a normalized arclength measure on the circular boundary $\partial \Omega$. Then it follows from the definition that $K_\Omega f$ is a constant function, and consequently $\|K_\Omega f + \CC \mathbf{1}\|_{C(\partial \Omega} = 0$, so $c(\Omega) = a(\Omega) = 0$. This shows that the implications $(i) \Rightarrow (ii)$ and $(ii) \Rightarrow (iii)$ hold.

It remains to prove $(iii) \Rightarrow (i)$. Fix a conformal mapping $\phi: \DD \to \Omega^o$, where $\DD$ is the open unit disk and $\Omega^o$ is the interior of $\Omega$. The mapping $\phi$ extends to a homeomorphism of $\partial \DD$ and $\partial \Omega$, and so it makes sense to define the probability measures $\mu^\phi_\zeta$ on $\partial \DD$ by the equation 
\[ \mu^\phi_\zeta(E) := \mu_\zeta( \phi(E) )\] where $E$ is a Borel subset of $\partial \DD$, and $\{\mu_\zeta\}_{\zeta \in \partial \Omega}$ is the double-layer potential of $\Omega$. Since $a(\Omega) = 0$, it follows that for every $f \in \A(\Omega)$ and every pair of points $\zeta, \zeta' \in \partial \Omega$ we have, by the change of variables formula, that 
\begin{align*}
0 &= \int_{\partial \Omega} f \, d\mu_\zeta - \int_{\partial \Omega} f \, d\mu_{\zeta'} \\
&= \int_{\partial \DD} f \circ \phi \, d \mu^\phi_\zeta - \int_{\partial \DD} f \circ \phi \, d \mu^\phi_{\zeta'}.
\end{align*} As $f$ varies over $\A(\Omega)$, $f \circ \phi$ varies over $\A(\DD) := \A(\overline{\DD})$, and it follows that $\mu^\phi_\zeta - \mu^\phi_{\zeta'}$ annihilates $\A(\DD)$. Then the theorem of brothers Riesz (see, for instance, \cite[Exercise 1, Chapter III]{Ga06}) implies that 
\[ \mu^\phi_\zeta - \mu^\phi_{\zeta'} = h \cdot s_{\partial \DD}\] where $h$ is a function with vanishing non-positive Fourier coefficients. Note that $h$ is real-valued, so the positive Fourier coefficients also vanish, and consequently $h \equiv 0$. Since $\zeta, \zeta'$ were arbitrary, we conclude that the hypothesis $a(\Omega) = 0$ implies that all the measures $\mu_\zeta$ are equal.

The conclusion that $\Omega$ is a disk is now a consequence of the geometric formula for $\rho_\zeta(\sigma)$ in \eqref{E:DensityCircleRadiusFormula}. Fix any $\sigma \in \partial \Omega$ which is not a corner. Since the measures $\mu_\zeta$ are all equal, so are their densities $\rho_\zeta(\sigma)$. Then the circles passing through $\zeta \in \partial \Omega$ and tangential to $\partial \Omega$ at $\sigma$ all have the same radius, and so they all coincide with each other. Thus one circle passes through all points $\zeta \in \partial \Omega$. Consequently $\partial \Omega$ is a circle, and so $\Omega$ is a disk.
\end{proof}

\section{Application to numerical ranges}
\label{S:ApplNumRan}

\subsection{Spectral constant estimate} 

Our principal motivation for the introduction of the analytic configuration constant is the following result which was mentioned in the Section~\ref{S:IntroSec} and which we will now prove.

\begin{theorem} \label{T:FunctCalcAWestimate} Let $T$ be a bounded linear operator on a Hilbert space $\HH$, and $W = \overline{W(T)}$ the closure of the numerical range of $T$. If $W$ has non-empty interior, then for every $f \in \A(W)$ we have
\[ \| f(T)\| \leq \Bigl( 1 + \sqrt{1 + a(W)} \Bigr) \| f \|_W,\]
where $a(W)$ is the analytic configuration constant in \eqref{E:AconfigConstDef}, and $\A(W)$ is the space of continuous functions on $W$ which are analytic in the interior of $W$.
\end{theorem}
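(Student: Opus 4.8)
The plan is to reproduce the Crouzeix--Palencia proof of the bound $1+\sqrt 2$, inserting at one point the definition of the analytic configuration constant in place of the crude estimate on $K_\Omega$ that they use. First I would reduce to the case where $f$ is a polynomial $p$ with $\|p\|_W=1$. Polynomials are uniformly dense in $\A(W)$ (Mergelyan's theorem, or simply the fact that $\CC\setminus W$ is connected), and since $W$ is \emph{a priori} an $M$-spectral set for $T$ for some finite $M$ (e.g.\ Crouzeix's $11.08$, or the Crouzeix--Palencia bound $1+\sqrt 2$), the holomorphic functional calculus $f\mapsto f(T)$ on $\A(W)$ is the continuous extension of $p\mapsto p(T)$; hence the desired inequality passes to uniform limits and it is enough to treat polynomials. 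Writing $\Omega:=W$ and $f:=p$, we then have $\|f\|_\Omega=1$, and if convenient we may normalise $\Omega$ using the affine invariance of $a(\cdot)$ recorded in Section~\ref{S:ApplNumRan}.

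Next I would bring in the Crouzeix--Palencia companion function. Let $u$ be the double-layer potential \eqref{E:DLformula} of $f$, so that $u=f+K_\Omega f$ on $\partial\Omega$ by \eqref{E:uKf-boundaryEq}; by the remark following \eqref{E:AconfigConstDef} there is $g\in\A(\Omega)$ with $\overline g=K_\Omega f$ on $\partial\Omega$, so $u=f+\overline g$ there, and since $f$ and $\overline g$ are harmonic in $\Omega^o$ with matching boundary values the identity $u=f+\overline g$ persists throughout $\Omega$. The only place where the analytic configuration constant enters is the estimate
\[
\|g+\CC\mathbf{1}\|_{\overline\Omega}=\|K_\Omega f+\CC\mathbf{1}\|_{\partial\Omega}\le a(\Omega)\,\|f+\CC\mathbf{1}\|_{\partial\Omega}\le a(\Omega),
\]
coming from the maximum principle (for the first equality, together with $\overline g=K_\Omega f$ on $\partial\Omega$), the definition \eqref{E:AconfigConstDef} of $a(\Omega)$, and $\|f+\CC\mathbf{1}\|_{\partial\Omega}\le\|f\|_{\partial\Omega}\le 1$. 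Subtracting a suitable constant from $g$ (and the conjugate constant from $u$, which preserves $u=f+\overline g$) we may therefore assume $\|g\|_{\overline\Omega}\le a(\Omega)$, whereas Crouzeix and Palencia use only the trivial bound $\|g\|_{\overline\Omega}\le 1$ that follows from $K_\Omega$ being a contraction on $C(\partial\Omega)$.

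Finally I would feed this into the Crouzeix--Palencia operator inequality — obtained by combining the pointwise bound satisfied by the double-layer potential of a function of modulus $\le 1$ with a positivity argument that transfers the estimate from $\partial\Omega$ to $T$, as in \cite{CP17} and as reorganised by Schwenninger and de Vries in \cite{SdV23}. That argument yields an inequality of the form
\[
\|f(T)\|^2\le 2\,\|f\|_{\overline\Omega}\,\|f(T)\|+\|f\|_{\overline\Omega}\,\|K_\Omega f+\CC\mathbf{1}\|_{\partial\Omega}.
\]
With the normalisation $\|f\|_{\overline\Omega}=1$ and the bound $\|K_\Omega f+\CC\mathbf{1}\|_{\partial\Omega}\le a(\Omega)$ from the previous paragraph, this becomes $\|f(T)\|^2\le 2\|f(T)\|+a(\Omega)$, and solving the quadratic gives $\|f(T)\|\le 1+\sqrt{1+a(\Omega)}$; undoing the reduction to polynomials completes the proof.

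I expect the main obstacle to be the last step: one has to re-run the Crouzeix--Palencia passage from the boundary to the operator with enough care to see that only the \emph{quotient} norm $\|K_\Omega f+\CC\mathbf{1}\|_{\partial\Omega}$, rather than the full norm $\|K_\Omega f\|_{\partial\Omega}$, enters on the right-hand side — this is precisely the quantity that $a(\Omega)$ bounds, and it is why analyticity of $f$ (hence the appearance of $a$ rather than $c$) pays off. A lesser point is that the approximation in the first step must be phrased so that $f(T)$ is unambiguously defined for every $f\in\A(W)$, not just for $f$ holomorphic on a neighbourhood of $W$.
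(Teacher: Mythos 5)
Your overall blueprint — normalise $\|f\|_W=1$, form $g\in\A(\Omega)$ with $\bar g=K_\Omega f$ on $\partial\Omega$, subtract a constant $\lambda$ so that $\|g+\lambda\mathbf{1}\|_\Omega\le a(\Omega)$, and feed this into a quadratic inequality of the Crouzeix--Palencia type — is exactly the strategy of the paper. But the middle step conceals a genuine gap, which you half-acknowledge in your last paragraph without closing it: the inequality
\[
\|f(T)\|^2\le 2\,\|f\|_{\Omega}\,\|f(T)\|+\|f\|_{\Omega}\,\|K_\Omega f+\CC\mathbf{1}\|_{\partial\Omega}
\]
is \emph{not} what a straightforward re-run of the Crouzeix--Palencia argument gives for an arbitrary $f$. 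When you replace $g$ by $g+\lambda\mathbf{1}$ inside the Crouzeix--Palencia operator estimate you do not merely change the constant on the right-hand side: you also create an extra term proportional to $\lambda\langle f(T)x,x\rangle$, and $|\lambda|$ can be of order one (it is bounded only by $\|g\|_\Omega+\|g+\lambda\|_\Omega\le 1+a(\Omega)$). If one pushes the naive computation through with the functional-calculus constant $M$ in place of the inner-product estimates, one obtains something like $\|f(T)\|^2\le 2\|f(T)\|+Ma(\Omega)+|\lambda|\,\|f(T)\|$, whose bootstrap lands well above $1+\sqrt{1+a(\Omega)}$.

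The device that kills the $\lambda$-term is precisely the \emph{extremal pair} machinery of \cite{SdV23} and \cite{BGGRSW20}, which is the heart of the paper's proof and is absent from yours. One fixes an extremal function $f$ (so $\|f(T)\|$ is maximal among $\|f\|_{\Omega^o}\le 1$) together with an extremal unit vector $x$ with $\|f(T)x\|=\|f(T)\|$; Lemma~\ref{L:extremal} then gives $\langle f(T)x,x\rangle=0$, so that after writing $\langle f(T)x,f(T)x\rangle$ as in \eqref{E:longeqn} the problematic term $\lambda\langle f(T)x,x\rangle$ disappears and the middle term is estimated against $a(\Omega)$ via the same extremality. This explains why the paper needs two further reductions you have skipped: Lemma~\ref{L:AWEstimateLemma1} to pass to finite-dimensional $\HH$ (so an extremal \emph{vector} exists), and Lemma~\ref{L:AWEstimateLemma2} to replace $W$ by a slightly larger domain $\Omega$ with $\overline{W(T)}\subset\Omega^o$ (so that the extremal $f\in H^\infty(\Omega^o)$ and the holomorphic calculus are well defined, and so that the extremal $f$ can be approximated by a sequence $f_n\in\A(\Omega)$ to which $K_\Omega$ can be applied). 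Your approximation-by-polynomials reduction handles the definability of $f(T)$ for $f\in\A(W)$, but does not supply extremal pairs, and without them your asserted ``Crouzeix--Palencia operator inequality'' is unproved and, in the sharp form you wrote it, not known to hold for general $f$.

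In short: the decomposition via the double-layer potential, the role of $a(\Omega)$ as a bound for $\|K_\Omega f+\CC\mathbf{1}\|_{\partial\Omega}$, and the final quadratic in $\|f(T)\|$ are all correctly identified, but the passage from boundary estimate to operator estimate is the crux, and it requires the extremal-pair argument (hence the finite-dimensional and interior-containment reductions) that your proposal leaves out.
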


Of course, if $W$ has no interior, then its convexity forces it to be a line segment. In that case $T$ is a normal operator, and the spectral theorem gives us the better estimate $\| f(T)\| \leq \|f\|_{\sigma(T)}$, where $f$ may be any Borel measurable function on the spectrum $\sigma(T)$. Thus Theorem~\ref{T:FunctCalcAWestimate} implies Theorem~\ref{T:Maintheorem3}. In what follows, we will assume that $W$ has non-empty interior.

Let us make some initial remarks before going into the proof of Theorem~\ref{T:FunctCalcAWestimate}. In the case $\sigma(T)$ is contained in the interior of $W$, then $f(T)$ is defined, as usual, through the Dunford-Riesz holomorphic functional calculus. If $\partial W \cap \sigma(T) \neq \emptyset$, then this definition does not work. Nevertheless, if $f \in \A(W)$, then it is a standard result of approximation theory that a sequence of analytic polynomials $(p_n)$ exists which converges to $f$ uniformly on $W$. In the presence of \textit{any} uniform bound of the form $\|p(T)\| \leq K \|p(T)\|_W$ for polynomials $p$, we may then define $f(T)$ as the limit of the sequence $(p_n(T))$ in the operator norm. Such bounds are known to exists, the strongest known bound $K \leq 1 + \sqrt{2}$ being due to Crouzeix and Palencia. Theorem~\ref{T:FunctCalcAWestimate} improves this estimate given information about the numerical range of $T$. 

Our proof of Theorem~\ref{T:FunctCalcAWestimate} combines the argument of Crouzeix and Palencia from \cite{CP17} with ideas of Schwenninger and de Vries from \cite{SdV23}, where bounds for various functional calculi are derived as a consequence of the existence of \textit{extremal functions} and \textit{extremal vectors}. Let $U$ be an open set in the plane, and $H^\infty(U)$ be the algebra of bounded holomorphic functions on $U$. Given an operator $T: \HH \to \HH$ with $\sigma(T)$ contained in $U$, it is elementary that the quantity
\[ 
\sup \Big\{ \|f(T)\| : f \in H^\infty(U), \|f\|_U \leq 1\Big\}
\]
is finite. A normal-families argument shows that an $f \in H^\infty(U)$ exists with $\|f\|_U = 1$ for which the supremum above is attained. Any such $f$ will be called for an \textit{extremal function}. If, moreover, a vector $x \in \HH$ with $\|x\|_\HH = 1$ exists for which 
\[ 
\sup \Big\{ \|f(T)\| : f \in H^\infty(U), \|f\|_U \leq 1\Big\} = \|f(T)x\|_\HH
\]
then we will say that $x$ is an \textit{extremal vector}, and $(f,x)$ is an \textit{extremal pair}. Unless $\dim \HH < \infty$, an extremal vector might not exist, but we will be able to reduce the proof to the finite-dimensional case. The importance of the concept of extremal pairs $(f,x)$ stems from the following result. We refer the reader to \cite[Theorem 4.5]{BGGRSW20} for a proof (see also \cite[Proposition 3]{SdV23}).

\begin{lemma}\label{L:extremal}
Let $T:\HH \to \HH$ be a bounded linear operator, and $U$ be an open neighbourhood of $\sigma(T)$. Let $(f,x)$ be a corresponding extremal pair. If $\|f(T)\|>1$, then $f(T)x$ is orthogonal to $x$ in $\HH$:
\[
\langle f(T)x,x\rangle_\HH =0.
\]
\end{lemma}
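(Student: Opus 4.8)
The plan is to argue by a first-order variational argument, perturbing the extremal function $f$ in the direction of a conformal rotation about the putative orthogonal complement condition, and then extracting the orthogonality from the stationarity of the extremal value. More precisely, set $M := \sup\{\|g(T)\| : g \in H^\infty(U), \|g\|_U \le 1\}$, and suppose for contradiction that $\langle f(T)x, x\rangle_\HH = c \neq 0$, while $\|f(T)x\|_\HH = \|f(T)\| = M > 1$. First I would normalize and replace $f$ by $e^{i\theta} f$ so that $c > 0$; this does not affect $\|f(T)\|$ or the extremality of $(f,x)$. The key idea is to consider the family of candidate functions obtained by composing $f$ with Möbius automorphisms of $\DD$: for small real $t$, let $\varphi_t(w) := \dfrac{w - t}{1 - tw}$, an automorphism of the unit disc, and set $f_t := \varphi_t \circ f$. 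Since $\|f\|_U \le 1$, each $f_t$ again lies in $H^\infty(U)$ with $\|f_t\|_U \le 1$, so $\|f_t(T)\| \le M$ for all small $t$.

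The main computation is then to expand $\|f_t(T)x\|_\HH^2$ to first order in $t$. Using $\varphi_t(w) = w - t(1 - w^2) + O(t^2)$, one gets $f_t(T) = f(T) - t\,(I - f(T)^2) + O(t^2)$ in operator norm (the holomorphic functional calculus is continuous, and on a neighbourhood of $\sigma(T)$ the error term is uniformly $O(t^2)$). Therefore
\[
\|f_t(T)x\|_\HH^2 = \|f(T)x\|_\HH^2 - 2t\,\Re\langle (I - f(T)^2)x,\ f(T)x\rangle_\HH + O(t^2).
\]
Since $\|f_t(T)x\|_\HH^2 \le \|f_t(T)\|^2 \le M^2 = \|f(T)x\|_\HH^2$ for all small $t$ of either sign, the coefficient of $t$ must vanish:
\[
\Re\langle (I - f(T)^2)x,\ f(T)x\rangle_\HH = 0,
\]
that is, $\Re\langle f(T)x, f(T)x\rangle_\HH = \Re\langle f(T)^2 x,\ f(T)x\rangle_\HH$, i.e. $M^2 = \Re\langle f(T)^2 x, f(T)x\rangle_\HH \le \|f(T)^2 x\|_\HH \|f(T)x\|_\HH \le M^2 \cdot M / M$... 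I would need to be more careful here: the cleaner route is to instead perturb by $f_t := (1 - t)f + t\,\lambda$ for a well-chosen unimodular constant $\lambda$, or equivalently to work directly with the quantity $\langle f(T)x, x\rangle$ by testing against the one-parameter family $g_t = f/(1 + t\overline{c}\,f)$ renormalized; the point in every variant is that a nonzero value of $\langle f(T)x,x\rangle$ would allow a first-order increase of $\|f_t(T)x\|_\HH$ while keeping $\|f_t\|_U \le 1$, contradicting extremality. I expect the bookkeeping of which admissible perturbation isolates exactly $\langle f(T)x,x\rangle$ (rather than $\langle f(T)x, f(T)x\rangle$ or $\langle f(T)^2x, f(T)x\rangle$) to be the main obstacle, and the resolution is to use that $f(T)x$ has norm $M>1$ while $x$ has norm $1$, so the constant perturbation $f_t = f + t\eta$ with $\eta$ a suitable constant of modulus $\le$ something moves the image of $x$ in the direction of $-x$ and thereby kills the component of $f(T)x$ along $x$ to first order — unless that component is already zero.

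Finally, I would package this cleanly following \cite[Theorem 4.5]{BGGRSW20}: reduce first to the finite-dimensional case if needed (restrict $T$ to a suitable finite-dimensional semi-invariant subspace on which the extremal value is nearly attained, so that an honest extremal vector exists — although since the statement already hypothesizes an extremal pair $(f,x)$, this reduction is not required here), then run the variational argument above with the perturbation $f_t := \dfrac{f - t}{1 - tf}$ when $\langle f(T)x,x\rangle$ happens to be real and positive after a unimodular rotation, and read off from the vanishing first-order term that $\langle f(T)x, x\rangle_\HH = 0$. The hypothesis $\|f(T)\| > 1$ is exactly what is needed to ensure the relevant first-order coefficient is genuinely controlled by $\langle f(T)x,x\rangle$ and not swamped by other terms, so that its non-vanishing really does contradict maximality.
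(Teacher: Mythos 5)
The paper does not actually supply a proof of Lemma~\ref{L:extremal}; it delegates to \cite[Theorem 4.5]{BGGRSW20} and \cite[Proposition 3]{SdV23}. So the comparison below is against the argument those references contain, which your variational sketch is clearly attempting to reconstruct. The opening move is right — perturb $f$ through Möbius automorphisms $\varphi_t$ of the disc, so that $f_t := \varphi_t\circ f$ is still an admissible competitor, and extract a first-order stationarity condition from $\|f_t(T)x\|_\HH^2 \le \|f(T)x\|_\HH^2$ — and your expansion $f_t(T) = f(T) - t\bigl(I - f(T)^2\bigr) + O(t^2)$ is correct. But the proposal then stalls at the stationarity condition $\Re\langle (I - f(T)^2)x,\, f(T)x\rangle_\HH = 0$, and the attempted Cauchy--Schwarz bound on $\langle f(T)^2 x, f(T)x\rangle_\HH$ goes nowhere, as you yourself notice.

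The missing ingredient is that $x$ is a \emph{norming vector} for $f(T)$, so it satisfies the singular-vector equation
\[
f(T)^*f(T)x = \|f(T)\|^2 x.
\]
This is immediate from $\|x\|_\HH=1$, $\|f(T)x\|_\HH = \|f(T)\| =: M$, and the equality case of Cauchy--Schwarz applied to $M^2 = \langle f(T)^*f(T)x, x\rangle_\HH \le \|f(T)^*f(T)x\|_\HH \le M^2$. With this identity,
\[
\langle f(T)^2x,\, f(T)x\rangle_\HH = \langle f(T)x,\, f(T)^*f(T)x\rangle_\HH = M^2\,\langle f(T)x, x\rangle_\HH,
\]
so the stationarity condition becomes $\Re\,\overline{c} = M^2\,\Re\, c$ where $c := \langle f(T)x,x\rangle_\HH$, i.e.\ $(M^2-1)\Re\, c = 0$. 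After your unimodular rotation of $f$ making $c$ real (or, equivalently, also letting $t$ range over purely imaginary values to kill $\Im\, c$), this gives $c=0$. This also corrects a point in your final paragraph: the hypothesis $\|f(T)\|>1$ is not needed to ``control error terms'' — it enters exactly once, to divide by $M^2-1$. So the proposal is not wrong in its strategy, but as written it contains a genuine gap: without invoking $f(T)^*f(T)x=M^2x$, the first-order condition you derive simply does not imply $\langle f(T)x,x\rangle_\HH=0$, and none of the alternative perturbations you float (convex combinations with a constant, $f/(1+t\overline{c}f)$, etc.) repair this; they all lead to a stationarity identity that still has to be combined with the singular-vector relation to close.
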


The next two lemmas will reduce our task to consideration of finite-dimensional Hilbert spaces, in which extremal vectors exist, and will dispose of the problematic set $\sigma(T) \cap \partial W$. The first observation is essentially contained in \cite[Proposition 9]{SdV23}.

\begin{lemma} \label{L:AWEstimateLemma1}
Let $\Omega$ be a compact convex domain with non-empty interior. If for some $K > 0$ the estimate 
\[ \|p(T)\| \leq K \| p \|_\Omega\] holds for every polynomial $p$ and every operator $T$ on a finite-dimensional Hilbert space with $W(T)$ contained in the interior of $\Omega$, then the same estimate with the same constant $K$ holds also for operators $T$ on infinite-dimensional Hilbert spaces with $\overline{W(T)}$ contained in the interior of $\Omega$.
\end{lemma}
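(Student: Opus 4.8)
The plan is to run the standard reduction to a finite-dimensional compression. First I would fix an operator $T$ on an infinite-dimensional Hilbert space $\HH$ with $\overline{W(T)}$ in the interior of $\Omega$, a polynomial $p$ of degree $d$, and a unit vector $x \in \HH$; since $\|p(T)\| = \sup_{\|x\|_\HH = 1}\|p(T)x\|_\HH$, it is enough to bound $\|p(T)x\|_\HH$ by $K\|p\|_\Omega$.

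Next I would introduce the finite-dimensional subspace $\mathcal{M} := \operatorname{span}\{x, Tx, \ldots, T^d x\} \subseteq \HH$ and the compression $T_0 := P_\mathcal{M}T|_\mathcal{M}$, where $P_\mathcal{M}$ is the orthogonal projection of $\HH$ onto $\mathcal{M}$. For any unit vector $v \in \mathcal{M}$ one has $\langle T_0 v, v\rangle_\HH = \langle P_\mathcal{M}Tv, v\rangle_\HH = \langle Tv, v\rangle_\HH$, so $W(T_0) \subseteq W(T) \subseteq \overline{W(T)}$, and hence $W(T_0)$ lies in the interior of $\Omega$. The finite-dimensional hypothesis then applies to $T_0$ and gives $\|p(T_0)\| \leq K\|p\|_\Omega$.

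The one step that needs care — and the only place where the argument is not completely automatic — is the identity $p(T_0)x = p(T)x$, which is required because $\mathcal{M}$ is generally not invariant under $T$. I would prove by induction on $k$ that $T_0^k x = T^k x$ for $0 \leq k \leq d$: the case $k = 0$ is trivial, and if $T_0^k x = T^k x$ with $k < d$, then $T^k x \in \mathcal{M}$, so $T_0^{k+1}x = P_\mathcal{M}T(T^k x) = P_\mathcal{M}T^{k+1}x = T^{k+1}x$, using $T^{k+1}x \in \mathcal{M}$. Summing against the coefficients of $p$ yields $p(T_0)x = p(T)x$. Because $\mathcal{M}$ carries the norm inherited from $\HH$, this gives
\[
\|p(T)x\|_\HH = \|p(T_0)x\|_\HH \leq \|p(T_0)\| \leq K\|p\|_\Omega,
\]
and taking the supremum over unit vectors $x$ completes the proof. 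I do not expect any genuine obstacle here; the mild subtlety is simply that one must work with the compression $T_0$ rather than hoping for a finite-dimensional $T$-invariant subspace, and then check compatibility on the vectors $T^k x$, $0 \le k \le d$.
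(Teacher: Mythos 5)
Your proof is correct and follows essentially the same route as the paper: both compress $T$ to the finite-dimensional subspace $\mathcal{M} = \operatorname{span}\{x, Tx, \ldots, T^d x\}$, observe that the numerical range of the compression sits inside $W(T)$ and hence in $\Omega^o$, and use the compatibility $p(T_0)x = p(T)x$. You simply spell out the inductive check $T_0^k x = T^k x$ for $0 \le k \le d$ that the paper leaves implicit.
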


\begin{proof}
Let $T: \HH \to \HH$ be as above, with $\dim \HH = \infty$. It suffices to show that \[ \|p(T)x\|_\HH \leq K \| p \|_\Omega \|x\|_\HH\] holds for every analytic polynomial $p$ and every $x \in \HH$. Note that $p(T)x$ is contained in the finite-dimensional subspace $\mathcal{K}$ spanned by $\{x, Tx, \ldots, T^dx\}$, where $d$ is the degree of the polynomial $p$. If $\Pi: \HH \to \mathcal{K}$ is the orthogonal projection, then $p(T)x = \Pi p(T)x = p(\Pi T)x$, where $\Pi T : \mathcal{K} \to \mathcal{K}$ is an operator on a finite-dimensional Hilbert space. Since $W(\Pi T) \subset \overline{W(T)}$, our hypothesis implies
\[ \|p(T)x\|_\HH = \|p(\Pi T)x\|_\mathcal{K} \leq K \|p\|_\Omega \|x\|_\mathcal{K} = K \|p\|_\Omega \|x\|_\HH.\]
The lemma follows.
\end{proof}

The proof of the next lemma will use affine invariance of the configuration constants. Let us fix $\alpha, \beta \in \CC$, $\alpha \neq 0$, and an affine mapping $A(z) := \alpha z + \beta$. Then $A$ is a conformal transformation of $\CC$ with the additional property of taking a line segment of length $L$ to a line segment of length $|\alpha|L$, and a circle of radius $R$ to a circle of radius $|\alpha|R$. Let $\widetilde{\Omega} = A(\Omega)$ be the affine image of $\Omega$ under $A$, and recall the formula for the Neumann-Poincaré kernel in \eqref{E:MuZetaEq} and its geometric interpretation. If $\zeta, \sigma \in \partial \Omega$, $E$ is a Borel subset of $\partial \Omega$, and $s, \widetilde{s}$ are the arclength measures on $\partial \Omega$ and $\partial \widetilde{\Omega}$ respectively, then it follows from the properties of $A$ listed above that
\begin{enumerate}[(i)]
\item $\theta_\zeta = \theta_{A(\zeta)}$,
\item $|\alpha| s(E) = \widetilde{s}(A(E))$,
\item $|\alpha| R_{\zeta, \sigma} = R_{A(\zeta), A(\sigma)}$. 
\end{enumerate}
A consequence is that the Neumann-Poincaré kernels $\{\mu_\zeta\}_{\zeta \in \partial \Omega}$ and $\{\widetilde{\mu}_{A(\zeta)}\}_{A(\zeta) \in \partial \Omega}$ of the respective domains satisfy 
\[ \widetilde{\mu}_{A(\zeta)}(A(E)) = \mu_\zeta(E), \quad E \text{ a Borel subset of } \partial \Omega.\] Then a change of variables shows that $K_\Omega (\widetilde{f} \circ A) = K_{\widetilde{\Omega}} \widetilde{f}$ for any $\widetilde{f} \in C(\partial \widetilde{\Omega})$, and it follows that 
\[ a(\Omega) = a(\widetilde{\Omega}), \quad c(\Omega) = c(\widetilde{\Omega}).\] Armed with these equalities, we make our second observation.

\begin{lemma} \label{L:AWEstimateLemma2}
Assume that the estimate 
\begin{equation}\label{E:aOmegaEst} \|p(T)\| \leq \Bigl( 1 + \sqrt{1 + a(\Omega)} \Bigr) \|p \|_\Omega
\end{equation} holds for every polynomial, every compact convex domain $\Omega$, and every operator $T$ for which $W(T)$ is contained in the interior of $\Omega$. Then Theorem~\ref{T:FunctCalcAWestimate} holds.
\end{lemma}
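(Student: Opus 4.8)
The plan is to derive Theorem~\ref{T:FunctCalcAWestimate} from the hypothesised estimate \eqref{E:aOmegaEst} by dilating $W$ slightly to a domain that \emph{strictly} contains the numerical range, then invoking the affine invariance of the analytic configuration constant recorded just above, and finally passing from polynomials to functions in $\A(W)$ by approximation. Throughout we assume, as in the statement of Theorem~\ref{T:FunctCalcAWestimate}, that $W=\overline{W(T)}$ has non-empty interior.

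First I would fix a point $w_0$ in the interior of $W$ and, for each $r>1$, introduce the affine map $A_r(z):=rz+(1-r)w_0$ and the dilated domain $\Omega_r:=A_r(W)$, which is again a compact convex domain with non-empty interior. I claim that $W\subset\Omega_r^o$ whenever $r>1$. Indeed, for $w\in W$ the preimage $A_r^{-1}(w)=(1/r)w+(1-1/r)w_0$ is a proper convex combination of the point $w\in W$ and the interior point $w_0$, hence lies in $W^o$; applying the homeomorphism $A_r$ gives $w\in A_r(W^o)=\Omega_r^o$. Consequently $W(T)\subseteq\overline{W(T)}=W\subset\Omega_r^o$, so the hypothesis applies with $\Omega=\Omega_r$ and yields
\[
\|p(T)\|\le\Bigl(1+\sqrt{1+a(\Omega_r)}\Bigr)\|p\|_{\Omega_r}
\]
for every polynomial $p$. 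Since $\Omega_r$ is an affine image of $W$, the affine invariance established just above gives $a(\Omega_r)=a(W)$.

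Next I would let $r$ decrease to $1$. The domains $\Omega_r$ are nested and decrease to $W$: one checks $\Omega_{r'}\subseteq\Omega_r$ for $1<r'<r$, and $\bigcap_{r>1}\Omega_r=W$ because a point lying in every $\Omega_r$ is, as $r\downarrow1$, a limit of points of $W$ and so lies in the closed set $W$. A short compactness argument (a maximiser of $|p|$ over $\Omega_r$ has a subsequential limit, which necessarily lies in $\bigcap_{r>1}\Omega_r=W$) then shows $\|p\|_{\Omega_r}\to\|p\|_W$ as $r\downarrow1$. Passing to the limit in the displayed inequality gives $\|p(T)\|\le\bigl(1+\sqrt{1+a(W)}\bigr)\|p\|_W$ for every polynomial $p$. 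For general $f\in\A(W)$ one takes polynomials $p_n\to f$ uniformly on $W$ (possible since $W$ is compact and convex, so $\CC\setminus W$ is connected); then $(p_n(T))$ is Cauchy in the operator norm by the polynomial bound, its limit is by definition $f(T)$, and passing to the limit in $\|p_n(T)\|\le(1+\sqrt{1+a(W)})\|p_n\|_W$ delivers the conclusion of Theorem~\ref{T:FunctCalcAWestimate}.

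The argument is essentially routine and I do not expect a serious obstacle. The only points that require a little care are the elementary convex-geometry facts that dilation about an interior point pushes the compact set $W$ strictly into $\Omega_r$ and that $\|p\|_{\Omega_r}\to\|p\|_W$. The real content of this reduction is just that the estimate \eqref{E:aOmegaEst}, which is hypothesised only for domains strictly containing $W(T)$, is stable under the limit $r\downarrow1$ precisely because $a(\cdot)$ is affine invariant.
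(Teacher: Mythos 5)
Your proof is correct and takes essentially the same approach as the paper: dilate $W$ slightly about an interior point, invoke affine invariance of $a(\cdot)$, let the dilation factor tend to $1$, and pass from polynomials to $\A(W)$ by density. The only cosmetic difference is that the paper first translates $T$ so that $0$ is an interior point of $W$ and then dilates about the origin, whereas you dilate directly about an interior point $w_0$ without translating the operator; the two are interchangeable.
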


\begin{proof} Replacing $T$ by an operator $T + \beta I$ for some $\beta \in \CC$, we may assume that $0$ lies in the interior of $W(T)$. Let $W = \overline{W(T)}$, and 
\[
W_r = \{ rz : z \in W \}, \quad r > 1.
\] Then $W_r$ is a convex domain which contains $W$ in its interior. By our assumption, for any analytic polynomial $p$ we have \[ \|p(T)\| \leq \Bigl( 1 + \sqrt{1 + a(W_r)} \Bigr) \|p\|_{W_r}.\] Since $W_r$ is an affine image of $W$, we have $a(W_r) = a(W)$. Since this holds for all $r > 1$, and since $\lim_{r \to 1} \|p\|_{W_r} = \|p\|_W$, we may let $r \to 1$ to obtain the desired estimate whenever $p$ is an analytic polynomial. The estimate for $f \in \A(W)$ follows by density of polynomials in $\A(W)$.
\end{proof}

\begin{proof}[Proof of Theorem~\ref{T:FunctCalcAWestimate}]
By Lemma~\ref{L:AWEstimateLemma2}, it will be sufficient to establish the estimate \eqref{E:aOmegaEst} whenever $\Omega$ contains $W(T)$ in its interior $\Omega^o$. Moreover, by Lemma~\ref{L:AWEstimateLemma1}, we may assume that $T$ is an operator on a finite-dimensional Hilbert space $\HH$. Let $U = \Omega^o$ and $(f,x)$ be an extremal pair corresponding to $U$ and the operator $T$. If $\|f(T)\| \leq 1$, then \eqref{E:aOmegaEst} certainly holds, so we may assume that $\|f(T)\| > 1$. 

Let $(f_n)$ be a sequence in $A(\Omega)$ such that $\|f_n\|_\Omega\le 1$
and $f_n\to f$ locally uniformly in $\Omega$.
Then $f_n(T)\to f(T)$ in operator norm.
Set $g_n:=K_{\Omega}\overline{f}_n$. It is shown in \cite[Lemmas 2.1 and 2.3]{CP17} that $g_n \in \A(\Omega)$ and 
\begin{equation}\label{E:2bound}
\|f_n(T)+g_n(T)^*\|\le 2.
\end{equation}
For each $n$, we may choose $\lambda_n\in\CC$ such that 
\[
\|g_n+\lambda_n \mathbf{1}\|_\Omega=\inf_{\lambda\in\CC}\|g_n+\lambda \mathbf{1}\|_\Omega  \le a(\Omega).
\]
We now have the following identity:
\begin{align}\label{E:longeqn}
\langle f_n(T)x,\, f_n(T)x\rangle_\HH
=& \, \langle f_n(T)x, (f_n(T)+g_n(T)^*)x\rangle_\HH \\
&-\langle f_n(T)x, (g_n+\lambda_n \mathbf{1})(T)^*x\rangle_\HH \nonumber \\
&+\lambda_n\langle f_n(T)x,x\rangle_\HH. \nonumber
\end{align}
Let us consider each of the terms in this identity. By the choice of $x$, we have
\[
\langle f_n(T)x,f_n(T)x\rangle_\HH =\|f_n(T)x\|^2\underset{n\to\infty}\longrightarrow\|f(T)x\|^2=\|f(T)\|^2.
\]
Also, from \eqref{E:2bound} and the Cauchy--Schwarz inequality,
\[
\bigl|\langle f_n(T)x, (f_n(T)+g_n(T)^*)x\rangle_\HH \bigr|\le 2\|f_n(T)\|\underset{n\to\infty}\longrightarrow2\|f(T)\|.
\]
By Lemma~\ref{L:extremal}, we have
\begin{align*}
\bigl|\langle f_n(T)x, (g_n+\lambda_n \mathbf{1})(T)^*x\rangle\bigr|
&=
\bigl|\langle (f_n(g_n+\lambda_n \mathbf{1}))(T)x, x\rangle\bigr| \\
&\le \|f_n(g_n+\lambda_n \mathbf{1})\|_\Omega \\
&\le\|g_n+\lambda_n \mathbf{1}\|_\Omega \\
&\le a(\Omega).
\end{align*}
By Lemma~\ref{L:extremal} again, $\langle f(T)x,x\rangle_\HH =0$.
Since the sequence $(\lambda_n)$ is certainly
bounded (indeed $|\lambda_n|\le 2$), we deduce that
\[
\lambda_n\langle f_n(T)x,x\rangle_\HH \underset{n\to\infty}\longrightarrow0.
\]
Thus, letting $n\to\infty$ in \eqref{E:longeqn}, we deduce that
\[
\|f(T)\|^2\le 2\|f(T)\|+a(\Omega).
\]
Hence
\[
\|f(T)\|\le 1+\sqrt{1+a(\Omega)}.
\] 
In particular, for every polynomial $p$ with $\|p\|_\Omega = 1$ we have 
\[ \|p(T)\| \leq \|f(T)\| \leq 1+\sqrt{1+a(\Omega)},\] since $f$ is extremal. This is equivalent to \eqref{E:aOmegaEst}, and so the proof is complete.
\end{proof}

\appendix

\section{Double-layer potential on a general convex domain}
\label{sec:appendixA}

\subsection{Convex domains} Let $\Omega$ be a compact convex domain in the plane $\mathbb{C}$ with non-empty interior $\Omega^o$. We will be making no assumptions regarding smoothness of the boundary $\partial \Omega$. However, convexity itself implies that $\partial \Omega$ is a rectifiable simple closed curve with some additional properties. 

The orientation of $\partial \Omega$ is to be counter-clockwise (that is, positive), and we use $\sigma' \uparrow \sigma$ and $\sigma' \downarrow \sigma$ to denote, respectively, the counter-clockwise and clockwise one-sided convergence of $\sigma'$ to $\sigma$ within $\partial \Omega$. As a consequence of convexity of $\Omega$, the one-sided tangent angles exist at every point $\sigma \in \partial \Omega$, are locally given by 
\[\alpha_+(\sigma) := \lim_{\sigma' \downarrow \sigma} \arg ( \sigma' - \sigma), \quad \alpha_-(\sigma) := \lim_{\sigma' \uparrow \sigma} \arg (\sigma - \sigma'),\] and satisfy \[\alpha_-(\sigma) \leq \alpha_+(\sigma).\] Strict inequality may occur at most at a countable subset of $\partial \Omega$. If it occurs at $\sigma$, then we say that $\partial \Omega$ has a \textit{corner} at $\sigma$. At any point which is not a corner, the tangent angle \[\alpha(\sigma) := \alpha_+(\sigma) = \alpha_-(\sigma)\] is well-defined, and so is the tangent $T(\sigma) := e^{i \alpha(\sigma)}$ itself. If $t \mapsto \gamma(t)$ is any (positively-oriented) parametrization of $\partial \Omega$, and we set $\alpha(\sigma) = \alpha_+(\sigma)$ at the corners, then the locally defined function $\alpha(\gamma(t))$ is increasing in $t$, and consequently the tangent $T$ is continuous at every point which is not a corner of $\partial \Omega$. At a corner, the discontinuity of $T$ amounts to a jump of the argument of $T$. We denote by $N(\sigma) := -i T(\sigma)$ the outward-pointing normal at $\sigma \in \partial \Omega$. 

\subsection{Double-layer potential} Let $\Omega^o$ denote the interior of $\Omega$. To each $z \in \Omega^o$ we associate the measure $\mu_z$ on $\partial \Omega$, which for any arc $J \subset \partial \Omega$ satisfies
\begin{equation}
\label{E:MuZDefAppendix}
\mu_z(J) = \frac{1}{\pi}\int_J d\arg (\sigma - z) = \frac{1}{\pi}\bigl(\text{angle subtended at $z$ by $J$}\bigr).
\end{equation}

Here $\arg(\sigma - z)$ is any locally defined continuous determination of the argument function on $\partial \Omega$. Non-negativity of $\mu_z$ follows from convexity of $\Omega$ and our choice of positive orientation of $\partial \Omega$. With respect to this orientation, every arc $J =(a,b) \subset \partial \Omega$ has a start-point $a$ and an end-point $b$, and it is easy to see that
\[
\mu_z(J) = \frac{\arg(b - z) - \arg(a - z)}{\pi}.
\] In particular, $\mu_z(\partial \Omega) = 2$. 

The measure $\mu_z$ is absolutely continuous with respect to arclength $s$ on $\partial \Omega$. Indeed, if $\sigma_0 \in \partial \Omega$, $J_n = (a_n, b_n)$ is a sequence of arcs of $\partial \Omega$ which are shrinking to $\sigma_0$, and $|J_n|$ are the corresponding arclengths, then 
\begin{align*}
\frac{\pi \mu_z(J_n)}{|J_n|} &= \frac{1}{|J_n|}\int_{J_n} d\arg(\sigma - z) \\ &= \frac{\arg(b_n - z) - \arg(a_n - z)}{|J_n|} \\ 
&= \Im \Bigg( \frac{\log ( b_n - z) - \log (a_n - z)}{b_n-a_n} \cdot \frac{b_n - a_n}{|J_n|}\Bigg).
\end{align*} 
We use above an appropriate locally defined holomorphic branch of the logarithm. As $n \to \infty$, the first factor inside the brackets satisfies \[ \lim_{n \to \infty} \frac{\log ( b_n - z) - \log (a_n - z)}{b_n-a_n} = \frac{1}{\sigma_0 - z},\] while the second factor stays bounded as a consequence of the inequality $|b_n - a_n| \leq |J_n|$. Thus 
\[ \limsup_{n \to \infty} \frac{\mu_z(J_n)}{|J_n|} < \infty
\] and from elementary measure theory we obtain that $\mu_z$ is absolutely continuous with respect to $s$. If moreover $\sigma_0$ is not a corner, then it can be shown that
\[
\lim_{n \to \infty} \frac{|J_n|}{|b_n-a_n|} = 1,
\]
and so in additional to boundedness we even have the convergence 
\[ \lim_{n \to \infty} \frac{b_n - a_n}{|J_n|} = \lim_{n \to \infty} \frac{b_n - a_n}{|b_n - a_n|} = T(\sigma_0) = i N(\sigma_0).\]
Thus the Radon-Nikodym derivative satisfies
\begin{equation} \label{E:MuDensityAppendix}
\rho_z(\sigma) := \frac{d \mu_z }{ds}(\sigma) =  \frac{1}{\pi}\Im \Bigg( \frac{T(\sigma)}{\sigma - z}\Bigg) = \frac{1}{\pi}\Re \Bigg ( \frac{N(\sigma)}{\sigma - z} \Bigg)
\end{equation}
at every $\sigma \in \partial \Omega$ which is not a corner. 

\subsection{Boundary kernel} The Neumann-Poincaré kernel is the boundary version of the family of measures $\{\mu_z\}_{z \in \Omega^o}$ introduced above. To each point $\zeta \in \partial \Omega$ we associate the Borel probability measure on $\partial \Omega$ defined by \eqref{E:MuZDefAppendix} for arcs $J \subset \partial \Omega$ not containing the point $\zeta$. Because $\zeta \in \partial \Omega$, this definition implies that $\mu_\zeta( \partial \Omega \setminus \{\zeta\}) = \theta_\zeta/\pi$, where $\theta_\zeta = \pi - \alpha_+(\zeta) + \alpha_-(\zeta)$ can be interpreted as the angle of the aperture at $\zeta$. Indeed, $\theta_\zeta$ is equal to the increase in the argument of $\sigma - \zeta$ as we traverse one loop around $\partial \Omega$ starting and ending at the point $\zeta$, and since $\mu_\zeta$ is a probability measure, we must have \[ \mu_\zeta(\{\zeta\}) = 1 - \frac{\theta_\zeta}{\pi}.\]
With the exception of this possible point mass, $\mu_\zeta$ is otherwise absolutely continuous with respect to arclength. The corresponding Radon-Nikodym derivative is given by

\begin{equation} \label{E:MuZetaDensityAppendix}
\rho_\zeta(\sigma) := \frac{d \mu_\zeta }{ds}(\sigma) =  \frac{1}{\pi}\Im \Bigg( \frac{T(\sigma)}{\sigma - \zeta}\Bigg) = \frac{1}{\pi}\Re \Bigg ( \frac{N(\sigma)}{\sigma - \zeta} \Bigg).
\end{equation} The formula \eqref{E:MuZetaDensityAppendix} is established analogously to \eqref{E:MuDensityAppendix}.
All in all, the measure $\mu_\zeta$ can be decomposed as 
\[ d\mu_\zeta = (1- \theta_\zeta/\pi) d \delta_\zeta + \rho_\zeta d s,
\]
where $\delta_\zeta$ is a unit mass at $\zeta \in \partial \Omega$, $\theta_\zeta$ is the angle of the aperture at $\zeta$ (with the convention that $\theta_\zeta = \pi$ if $\zeta$ is not a corner), and where the density $\rho_\zeta$ is given by \eqref{E:MuZetaDensityAppendix}.

\subsection{Weak-star convergence} We establish now that 
\[ \lim_{z \to \zeta} \mu_z = \delta_\zeta + \mu_\zeta \] in the sense of the weak-star topology on measures. Note that if $B = B(\zeta, \delta)$ is a ball of radius $\delta > 0$ centred at $\zeta \in \partial \Omega$, then expressions \eqref{E:MuDensityAppendix} and \eqref{E:MuZetaDensityAppendix} for the densities of $\mu_z$ and $\mu_\zeta$ show that
\begin{equation}
\label{E:MUzMuZetaBCompConv}
\lim_{z \to \zeta} \int_{\partial \Omega \setminus B} f \, d\mu_z =  \int_{\partial \Omega \setminus B} f \, d\mu_\zeta
\end{equation} for every $f \in C(\partial \Omega)$. In particular, choosing $f = \mathbf{1}$, we obtain
\[ 2 =  \mu_\zeta(\partial \Omega \setminus B) + \lim_{z \to \zeta} \mu_z(B).\] Since \[ \lim_{\delta \to 0} \mu_\zeta(\partial \Omega \setminus B) = \mu_\zeta(\partial \Omega \setminus \{\zeta\}) = \theta_\zeta/\pi\] we see that given $\epsilon > 0$ for all sufficiently small $\delta > 0$ we will have
\[ \limsup_{z \to \zeta} \, |\mu_z(B) - 2 + \theta_\zeta/\pi| \leq \epsilon.\] 
Returning to general $f \in C(\partial \Omega)$, we have
\begin{align*}
\int_{\partial \Omega} f \, d\mu_z - \int_{\partial \Omega} f \, d[\delta_\zeta + \mu_\zeta] =& \int_{\partial \Omega \setminus B} f \, d\mu_z  - \int_{\partial \Omega \setminus B} f \, d\mu_\zeta \\
&+ \int_{B} \big(f - f(\zeta)\big) \, d\mu_z \\
&+ f(\zeta) \big(\mu_z(B) - 2  + \theta_\zeta/\pi \big) \\
&- \int_{B \setminus \{\zeta\}} f \, d\mu_\zeta.
\end{align*}
On the right-hand side, the first term tends to zero as $z \to \zeta$, the second can be made arbitrarily small by continuity of $f$, the crude estimate $\mu_z(B) \leq 2$ and choice of sufficiently small $\delta$, the third is dominated in modulus by $\|f\|_{\partial \Omega} \cdot \epsilon$ for $z$ sufficiently close to $\zeta$, and the fourth is dominated by $\|f\|_{\partial \Omega} \cdot \mu_\zeta(B \setminus \{\zeta\})$ which also can be made arbitrarily small by choice of sufficiently small $\delta$. The desired weak-star convergence follows.

\section*{Acknowledgement}

We are grateful to the referee for the careful reading of the paper, and for the suggestions which we used to improve the manuscript.

%
%

\bibliographystyle{plain}
\bibliography{mybib}

\end{document}